\DeclareMathOperator{\Sym}{Sym}
\DeclareMathOperator{\PSL}{PSL}
\DeclareMathOperator{\AI}{AI}
\DeclareMathOperator{\Art}{Art}
\DeclareMathOperator{\Aut}{Aut}
\DeclareMathOperator{\BC}{BC}
\DeclareMathOperator{\Jac}{Jac}
\DeclareMathOperator{\N}{N}
\DeclareMathOperator{\Hom}{Hom}
\DeclareMathOperator{\GL}{GL}
\DeclareMathOperator{\SL}{SL}
\DeclareMathOperator{\Res}{Res}
\DeclareMathOperator{\Gal}{Gal}
\DeclareMathOperator{\End}{End}
\DeclareMathOperator{\SO}{SO}
\DeclareMathOperator{\GSp}{GSp}
\DeclareMathOperator{\Sp}{Sp}
\DeclareMathOperator{\GSO}{GSO}
\DeclareMathOperator{\GSpin}{GSpin}
\DeclareMathOperator{\Ind}{Ind}
\DeclareMathOperator{\Tr}{Tr}
\DeclareMathOperator{\rec}{rec}
\DeclareMathOperator{\WD}{WD}
\newcommand{\A}{{\mathbf A}}
\newcommand{\Q}{{\mathbf Q}}
\newcommand{\Z}{{\mathbf Z}}
\newcommand{\C}{{\mathbf C}}
\newcommand{\R}{{\mathbf R}}
\newcommand{\F}{{\mathbf F}}
\newcommand{\CO}{\mathcal{O}}
\newcommand{\gc}{{\mathfrak{c}}}
\newcommand{\gp}{\mathfrak{p}}
\newcommand{\gq}{\mathfrak{q}}
\newcommand{\gN}{\mathfrak{N}}
\newcommand{\Qbar}{\overline{\Q}}
\newcommand{\ua}{\underline{a}}
\newcommand{\uk}{\underline{k}}
\newcommand{\ul}{\underline{\lambda}}
\newcommand{\Frob}{\mathrm{Frob}}
\numberwithin{equation}{section}
\theoremstyle{plain}
\newtheorem{thm}[equation]{Theorem}
\newtheorem{lem}[equation]{Lemma}
\newtheorem{prop}[equation]{Proposition}
\newtheorem{cor}[equation]{Corollary}
\newtheorem{conj}[equation]{Conjecture}
\newtheorem{rem}[equation]{Remark}
\theoremstyle{definition}
\title[Lifts of Hilbert modular forms and applications]{Lifts of Hilbert modular forms and application to modularity of abelian varieties}
\begin{document}
\author{Clifton Cunningham}
\address[Cunningham]{Department of Mathematics and Statistics, University of Calgary,
2500 University Drive NW, Calgary, AB, Canada T2N 1N4}
\email{ccunning@ucalgary.ca}
\thanks{CC was supported by an NSERC grant}

\author{Lassina Demb\'el\'e}
\address[Demb\'el\'e]{Max-Planck Institute for Mathematics, Vivatsgasse 7, Bonn 53111, Germany}
\email{lassina.dembele@gmail.com}
\thanks{At the early stages of this project, LD was supported by EPSRC Grant EP/J002658/1, and later by a Visiting Scholar grant from the Max-Planck Institute for Mathematics}

\keywords{Abelian varieties, Hilbert modular forms, Galois representations}
\subjclass[2010]{Primary: 11F41; Secondary: 11F80}

\maketitle

\begin{quote}\it
To Benedict H. Gross on his recent retirement
\end{quote}

\begin{abstract} In this paper, we prove the existence of certain lifts of Hilbert cusp forms to
general odd spin groups. We then use those lifts to provide evidence for a conjecture of Gross on the modularity of
abelian varieties {\it not} of $\GL_2$-type.
\end{abstract}


\section{\bf Introduction}

Let $F/E$ be a cyclic extension of totally real number fields of degree $g$. Let $\pi$ be a cuspidal Hilbert automorphic representation of
$\GL_2(\A_F)$, with trivial central character, which is not a base change from any proper sub-extension $F'/E$ of $F$. In this paper, 
we show that the automorphic induction $\Pi'$ of $\pi$ from $F$ to $E$, which {\it a priori} lives on $\GL_{2g}(\A_E)$, 
descends to an automorphic representation $\Pi$ of $\GSpin_{2g+1}(\A_E)$. In fact, our result is a little stronger
(see Theorem~\ref{thm:theta-lifts}).

As an application of our construction, we provide some evidence for the following conjecture of Gross on the modularity of
abelian varieties (see Conjecture~\ref{conj:gross} for a slightly more general statement). 

\begin{conj}[Gross-Langlands]\label{conj:gross0}
\footnote{Although there was a folklore conjecture before it, to the best of our knowledge, this precise and verifiable formulation is due
to Gross. It is anologous to Weil's refinement of the Shimura-Taniyama conjecture for elliptic curves over $\Q$.}
Let $B$ be an abelian variety of dimension $g$ and conductor $N$ defined over $\Q$ such that $\End_\Q(B) = \Z$. Then, there exists a globally 
generic cuspidal automorphic representation $\Pi$ on $\GSpin_{2g + 1}(\A_\Q)$ of weight $2$ and paramodular level structure 
$N$, with field of rationality $\Q$, such that 
$$L(B, s) = L(\Pi, s).$$
\end{conj}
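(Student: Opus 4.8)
\smallskip
\noindent\textbf{A strategy toward Conjecture~\ref{conj:gross0}.} I would try to prove the statement by reducing it to an automorphy problem for a compatible system and then running a Taylor--Wiles--Kisin argument for $\GSpin_{2g+1}$. Attach to $B$ its system of $\ell$-adic Tate-module representations $\rho_{B,\ell}\colon \Gal(\Qbar/\Q)\to\GSp_{2g}(\Z_\ell)$: the Weil pairing makes each symplectic with similitude the cyclotomic character, the Hodge--Tate weights all lie in $\{0,1\}$, the Artin conductor is $N$, and $L(B,s)=L(\rho_{B,\ell},s)$. Since $\GSp_{2g}=\widehat{\GSpin_{2g+1}}$ and the relevant degree-$2g$ $L$-function of $\GSpin_{2g+1}$ is the one attached to the standard embedding $\GSp_{2g}\hookrightarrow\GL_{2g}$, the conjecture is equivalent to the assertion that $\{\rho_{B,\ell}\}$ is automorphic for $\GSpin_{2g+1}(\A_\Q)$ via this embedding, by a globally generic cuspidal $\Pi$ whose bad-prime components are of ``paramodular type'' of conductor $N$, of weight $2$ at infinity, and with field of rationality $\Q$; the last two are then essentially forced ($\{0,1\}$ Hodge--Tate weights give weight $2$, and $\Q$-rational Frobenius traces plus strong multiplicity one give rationality over $\Q$), so the crux is the automorphy.

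The automorphy-lifting machine requires three inputs. First, the now-available construction (Kret--Shin, together with the stabilization of the twisted trace formula and Arthur's endoscopic classification for these groups) of $\GSp_{2g}(\Qbar_\ell)$-valued Galois representations attached to globally generic cuspidal automorphic representations of $\GSpin_{2g+1}$; this underlies the Galois deformation ring $R$ and the Hecke algebra $\mathbb{T}$ that one patches in a Taylor--Wiles system to prove an $R=\mathbb{T}$ theorem. Second, a large-image hypothesis on the residual representation: since $\End_\Q(B)=\Z$, an open-image theorem shows $\bar\rho_{B,\ell}$ has image containing $\Sp_{2g}(\F_\ell)$ for all but finitely many $\ell$, which supplies the Taylor--Wiles primes. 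Third, and this is the genuinely hard input, residual automorphy of $\bar\rho_{B,\ell}$ for $\GSpin_{2g+1}$ for at least one suitable auxiliary $\ell$. In general I would obtain this only potentially, via the Harris--Shepherd-Barron--Taylor / Barnet-Lamb--Gee--Geraghty strategy transported to the symplectic setting: exhibit a Dwork-type family of motives with $\GSp_{2g}$-monodromy containing both $B$ and a member that is visibly automorphic over a totally real field $F'$ (e.g.\ a Hilbert--Siegel point, or a point reachable by induction from $\GL_2$), deduce automorphy of $\rho_{B,\ell}|_{\Gal(\Qbar/F')}$ for $\GSpin_{2g+1}/F'$, and then let the lifting theorem over $F'$ promote it to potential modularity.

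To descend from $F'$ to $\Q$ I would use solvable base change for $\GSpin_{2g+1}$, whose cyclic totally real case is exactly what Theorem~\ref{thm:theta-lifts} provides when the representation in play is an automorphic induction, combined with Taylor's $2$--$3$ switch to remove the dependence on the auxiliary prime; global genericity of the descended $\Pi$ is preserved because the descent/theta constructions of Ginzburg--Rallis--Soudry type produce generic representations. It then remains to match the local data at the bad primes: weight $2$ is automatic from the Hodge--Tate weights $\{0,1\}$; the paramodular level $N$ requires identifying, at each $p\mid N$, the local Langlands parameter of $\Pi_p$ (read off from $\rho_{B,\ell}|_{\Gal(\Qbar/\Q_p)}$ through the N\'eron model and the monodromy filtration) with the parameter attached to the unique vector fixed by the paramodular group of level $p^{\ord_p(N)}$ --- i.e.\ a $\GSp_{2g}$-analogue of the Roberts--Schmidt paramodular newvector theory known for $\GSp_4$ --- and rationality over $\Q$ follows from the compatibility established above.

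The main obstacle is unmistakably the residual automorphy step: for $g\ge 3$ there is presently no Serre-type modularity statement for $\GSpin_{2g+1}$, and the potential-automorphy route depends on producing a genuinely usable family of motives with $\GSp_{2g}$-monodromy --- the analogue of the Dwork and Kuga--Sato families exploited for $\GL_n$ --- which is not available in general (for $g\le 2$ one is essentially in the territory of the paramodular conjecture, with Boxer--Calegari--Gee--Pilloni supplying potential modularity of abelian surfaces). A second serious gap is that the automorphy-lifting theorems, solvable base change, and the full Arthur classification (needed to force $\Pi$ to be cuspidal \emph{and} globally generic, and to handle minimal level and the $\ell=p$ situation) are only partially in the literature for $\GSpin_{2g+1}$, so a complete proof would have to establish those ingredients in the generality demanded here as well.
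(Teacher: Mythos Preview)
The statement is a \emph{conjecture}, and the paper does not prove it; there is no ``paper's own proof'' to compare against. What the paper does is supply evidence in the very special situation where $B$ becomes of $\GL_2$-type over a totally real cyclic extension $F/\Q$ (Theorems~\ref{thm:gross-evidence0} and~\ref{thm:prime-degree0}). The mechanism there is orthogonal to your Taylor--Wiles--Kisin strategy: one first proves $\GL_2$-modularity of $A=B\times_\Q F$ over $F$ (Serre's conjecture over $\Q$ plus a modularity lifting theorem of Khare--Thorne/Thorne, using a totally ramified prime of the coefficient field to force the residual representation to be a base change), and then one \emph{constructs} $\Pi$ on $\GSpin_{2g+1}(\A_\Q)$ by automorphic induction $\AI_{\Q/F}(\pi)$ to $\GL_{2g}(\A_\Q)$ followed by the Asgari--Shahidi/Hundley--Sayag descent (Theorem~\ref{thm:theta-lifts}). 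No $R=\mathbb{T}$ theorem for $\GSpin_{2g+1}$ is invoked anywhere.

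Your outline is a fair sketch of what a direct attack on the general conjecture would require, and you have correctly identified the two places where it breaks with current technology: there is no residual automorphy input for $\GSp_{2g}$-valued representations when $g\ge 3$ (no Serre-type statement, and no usable Dwork-type family with $\GSp_{2g}$-monodromy), and the automorphy-lifting and endoscopic-classification ingredients for $\GSpin_{2g+1}$ are not in place in the generality you need. One further point: your proposed descent step invokes Theorem~\ref{thm:theta-lifts} as ``solvable base change for $\GSpin_{2g+1}$,'' but that theorem is not a base-change result for $\GSpin$ at all --- it takes a $\GL_2$-representation over $F$ and produces a $\GSpin_{2g+1}$-representation over $E$ via $\AI$ and descent, which only applies when the Galois representation over $F'$ is already $2$-dimensional. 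So even granting potential automorphy over some $F'$, the return to $\Q$ would need a genuine cyclic base-change/descent theory for $\GSpin_{2g+1}$, which the paper does not supply.

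In short: your proposal is a plausible long-range program with honestly flagged gaps, but it bears no resemblance to what the paper actually does, and the paper makes no claim to prove the conjecture in general.
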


Gross' conjecture deals more generally with {\it discrete} symplectic motives of pure weight $1$, with Conjecture~\ref{conj:gross0} being a special
case, see \cite[Conjectures 3 and 4, and Proposition 6]{gross15}. It proposes a candidate admissible representation $\Pi = \bigotimes_v' \Pi_v$,
whose local components $\Pi_v$ are described using the local Langlands correspondence for $\SO_{2g+1}(\A_\Q)$ \cite{js04} after some appropriate
normalisation. It also predicts the level of $\Pi$ in terms of the conductor of $B$. The main challenge is to prove that this $\Pi$ is indeed automorphic and cuspidal. 
(We note that, although the field of rationality of $\Pi$ is not explicitly discussed in \cite{gross15}, its existence could be inferred from the theory of newforms developed in 
\S\S6-8. Also, the paramodular level structure is only defined for the group $\SO_{2g+1}$, but one can extend the theory to $\GSpin_{2g+1}$.)

Conjecture~\ref{conj:gross0} is a generalisation of the genus $2$ case also known as the pa\-ra\-mo\-du\-la\-ri\-ty conjecture \cite[Conjectures 1.1 and 1.4]{bk14};
the best theoretical evidence for the conjecture is only available in that case. Let $B$ be an abelian surface defined 
over $\Q$, with $\End_\Q(B) = \Z$. Let $N$ be the conductor of $B$, and $\ell\ge 5$ a prime not dividing $N$. Consider the Galois representation 
$$\rho_{B, \ell}:\, \Gal(\Qbar/\Q) \to \GSp_4(\Z_\ell),$$
arising from the $\ell$-adic Tate module of $B$. Under some technical conditions, Tilouine \cite{til06} and Pilloni \cite{pil12} prove 
an overconvergent ``$R = T$'' from which they deduce that there exists an overconvergent Siegel modular form $f$ of genus $2$ 
and weight $2$ such that $\rho_{B, \ell} \simeq \rho_{f, \ell}$. In this case, one is left with proving that the form 
$f$ is indeed classical. That crucial step has now been settled in \cite{pil17}. 
In \cite{cg16}, they also prove a modularity 
lifting theorem for Siegel modular forms of genus $2$ and non-regular weights. However, as it stands, this result doesn't apply yet to abelian surfaces. 
All of the results above impose a ``big image'' condition. This makes it extremely hard to check residual modularity in practice since $\GSp_4(\F_\ell)$ 
is rather big for $\ell\ge 5$. 


The infinite component $\Pi_\infty^{\rm h}$ of the automorphic representation $\Pi^{\rm h}$, associated to the form $f$ described above, 
is a limit of holomorphic discrete series. But, although the form $f$ itself is not cohomological, its construction relies crucially on the fact that
the symmetric space associated to $\SO(3,2)$ carries a complex structure. For example, in \cite{til06, pil12, pil17}, the form $f$ arises from
$\ell$-adic families of overconvergent Siegel modular forms. This requires working with the geometry of Siegel modular threefolds; this is
equally true for the approach used in \cite{cg16}.

Conjecture~\ref{conj:gross0} predicts an automorphic representation $\Pi$ which is globally generic. 
Its infinite component $\Pi_\infty$ is a limit of generic discrete series. 
Unfortunately, the symmetric space associated to the split group $\GSpin_{2g+1}$ no longer carries a 
complex structure when $g \ge 3$ (see \cite{gross15}). Moreover, as was pointed out to us by 
Tilouine \cite{til15},  $\Pi_\infty$ becomes more and more degenerate as  the dimension $g$ of $B$ increases. 
This means that, for $g\ge 3$, none of the approaches mentioned above readily applies, and that any 
substantial progress towards Conjecture~\ref{conj:gross0} will require some novel ideas.

Our goal in this paper is one that is more modest. We want to provide some evidence for Conjecture~\ref{conj:gross0} by means of functoriality. 
Our approach here is a generalisation of \cite{dk16, bdps15}. It consists in showing that Conjecture~\ref{conj:gross0}, or rather its converse, 
is compatible with Galois descent of isogeny classes, which corresponds to lifts on the automorphic side. More precisely, we prove the following:

\begin{thm}\label{thm:gross-evidence0} Let $F/\Q$ be a totally real cyclic extension of degree $g$, and write $\Gal(F/\Q) = \langle \sigma \rangle$.  
Let $\gN$ be an integral ideal such that $\gN^\sigma = \gN$, and $\pi$ a cuspidal Hilbert automorphic representation on $\GL_2(\A_F)$ 
of weight $2$ and level $\gN$, with trivial central character, which is not a base change from any proper subfield $F'$ of $F$. 
Let $L$ be the field of rationality of $\pi$. Assume that the Hecke orbit of $\pi$ is fixed by $\Gal(F/\Q)$. Further assume that there is an abelian variety
$A/F$ associated to $\pi$ by the Eichler-Shimura construction. 
Then, we have the followings:
\begin{enumerate}[(i)]
\item There exists a subfield $K$ of $L$ such that $L/K$ is cyclic of degree $g$;
\item The automorphic representation $\pi$ lifts to a globally generic cuspidal au\-to\-mor\-phic representation $\Pi$ of $\GSpin_{2g+1}(\A_\Q)$,
whose field of rationality is $K$;
\item There exists an abelian variety $B$ of dimension $g$ defined over $\Q$ such that $B\times_{\Q}F \sim A$,
$\End_{\Q}(B)\otimes\Q = K$, and
$$L(B, s) = \prod_{\Pi' \in [\Pi]}L(\Pi', s),$$ 
where $[\Pi]$ denotes the Hecke orbit of $\Pi$. 
\end{enumerate}
\end{thm}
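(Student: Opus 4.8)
\emph{Proof strategy.} The three assertions will be proved by three essentially independent arguments, the third of which — a Galois descent of isogeny classes — carries all the weight. For (i): since the Hecke orbit of $\pi$ is $\Gal(F/\Q)$-stable, $\pi^\sigma$ lies in that orbit, so $\pi^\sigma\cong\pi^{\tau_0}$ for some $\tau_0\in\Aut(\C)$. As $\sigma$ merely permutes the unramified Hecke eigenvalues of $\pi$, the field of rationality of $\pi^\sigma$ is again $L$; hence $\tau_0(L)=L$ and $c:=\tau_0|_L\in\Aut(L)$. The action of $\sigma$ (permuting places) and of $\tau_0$ (acting on eigenvalue fields) commute on isomorphism classes of automorphic representations, so $\pi^{\sigma^i}\cong\pi^{\tau_0^i}$ for all $i$, and $i=g$ gives $c^g=1$. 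I would then rule out that the order of $c$ is a proper divisor $m$ of $g$: in that case $\pi\cong\pi^{\tau_0^m}\cong\pi^{\sigma^m}$, so $\pi$ is invariant under $\Gal(F/F')$ with $F'=F^{\langle\sigma^m\rangle}\subsetneq F$, whence by Arthur--Clozel cyclic base change $\pi$ is a base change from $F'$, contradicting the hypothesis. Taking $K:=L^{\langle c\rangle}$ then makes $L/K$ cyclic of degree $g$, proving (i).

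\emph{Part (ii).} Apply Theorem~\ref{thm:theta-lifts} with $E=\Q$: as $\pi$ has trivial central character and is not a base change from a proper subfield, $\Pi':=\AI_{F/\Q}(\pi)$ on $\GL_{2g}(\A_\Q)$ is cuspidal and self-dual, and descends to a globally generic cuspidal $\Pi$ on $\GSpin_{2g+1}(\A_\Q)$ with functorial transfer $\Pi'$ to $\GL_{2g}$. Weight $2$ is archimedean bookkeeping: the parameter of $(\Pi')_\infty$ is the induction from $F$ to $\Q$ of the weight-$2$ discrete series parameters of the $\pi_v$ ($v\mid\infty$), whose descent to $\GSpin_{2g+1}(\R)$ is the limit of generic discrete series \cite{gross15} calls weight $2$. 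For the field of rationality I would use that $\AI_{F/\Q}$ commutes with $\Aut(\C)$-conjugation and that, $\pi$ not being a base change, $\AI_{F/\Q}(\pi_1)\cong\AI_{F/\Q}(\pi_2)$ iff $\pi_2\cong\pi_1^{\sigma^i}$ for some $i$; with $\pi^{\sigma^i}\cong\pi^{\tau_0^i}$ this gives $(\Pi')^\tau\cong\Pi'$ iff $\tau|_L\in\langle c\rangle$. Since $\Pi^\tau$ is the descent of $(\Pi')^\tau$, the field of rationality of $\Pi$ equals that of $\Pi'$, namely $L^{\langle c\rangle}=K$.

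\emph{Part (iii).} First upgrade $\pi^\sigma\cong\pi^{\tau_0}$ to $A^\sigma\sim A$ over $F$: both $A^\sigma$ and the Eichler--Shimura variety of $\pi^\sigma$ realise the compatible system of $\pi^\sigma$, hence are $F$-isogenous by Faltings, and the Eichler--Shimura variety of $\pi^\sigma\cong\pi^{\tau_0}$ is $A$ itself, as that construction depends only on the $\Aut(\C)$-orbit of the form. So $A$ is isogenous over $F$ to all its $\Gal(F/\Q)$-conjugates and, being of $\GL_2$-type with $\End_F(A)\otimes\Q=L$, is a building block over $F$. I would then run the descent theory for building blocks as in \cite{dk16,bdps15}: choosing $F$-isogenies $\mu_\sigma\colon A^\sigma\to A$, the resulting $2$-cocycle with values in $L^\times$ has a class in $H^2(\Gal(F/\Q),\Qbar^\times)$ whose vanishing is the only obstruction to the existence of $B/\Q$ with $B\times_\Q F\sim A$, and triviality of the central character of $\pi$ (equivalently, the determinant of the $\ell$-adic system of $\pi$ is exactly cyclotomic, with no finite-order twist) is what forces it to vanish without introducing a quadratic twist. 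The $\Gal(F/\Q)$-action on $\End_F(A)\otimes\Q=L$ carried through the $\mu_\sigma$ is by $c$, so $\End_\Q(B)\otimes\Q=L^{\langle c\rangle}=K$. Finally one rigidifies the descent so that the $\ell$-adic representation of $B$ is the induction from $F$ to $\Q$ of that of $A$, i.e.\ equals the one attached to $\Pi'=\AI_{F/\Q}(\pi)$; comparing Frobenius traces and using $L(\AI_{F/\Q}(\pi),s)=L(\pi,s)$ then gives $L(B,s)=\prod_{\Pi'\in[\Pi]}L(\Pi',s)$ once the conjugates of $\pi$ are grouped into $\langle\sigma\rangle$-orbits.

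The main obstacle is the descent step in Part (iii): showing that the class in $H^2(\Gal(F/\Q),\Qbar^\times)$ genuinely vanishes under the trivial-central-character hypothesis — not merely after a quadratic twist, which would change both the level and the $L$-function — and then rigidifying $B$ so that its $\ell$-adic realisation is exactly the automorphic induction of that of $A$; without the latter, neither the precise $L$-function identity in (iii) nor the identification $\End_\Q(B)\otimes\Q=K$ would follow. By contrast, (i) and (ii) are formal once one has cyclic base change and Theorem~\ref{thm:theta-lifts}.
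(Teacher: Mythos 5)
Parts (i) and (ii) of your proposal are essentially the paper's own route: (i) is the argument of Lemma~\ref{lem:fld-of-rationality} (the injection $\sigma\mapsto\tau$ into $\Aut(L)$, with injectivity forced by the no-proper-base-change hypothesis), and (ii) is Theorem~\ref{thm:theta-lifts} applied with $E=\Q$, the trivial central character supplying the norm-factorisation hypothesis; your identification of the field of rationality of $\Pi$ with that of $\Pi'=\AI_{\Q/F}(\pi)$ is the same (implicit) step the paper takes.

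The gap is in part (iii), which you correctly flag as carrying all the weight. The paper does not run a Ribet--Pyle type cocycle descent at all: it proves the descent unconditionally (Theorem~\ref{thm:av-descent}) by computing $\End_\Q(\Res_{F/\Q}A)\otimes\Q_\ell$ via Faltings' isogeny theorem. Your obstruction is misidentified. Because the isogenies $\mu_\sigma\colon {}^\sigma A\to A$ conjugate the $L$-action through the nontrivial automorphism $\tau$, the algebra they generate together with $L$ inside $\End_\Q(\Res_{F/\Q}A)\otimes\Q$ is a crossed product $(L/K,\tau,c)$, and the relevant class lives in $H^2(\Gal(L/K),L^\times)=\Br(L/K)\subset\Br(K)$, not in $H^2(\Gal(F/\Q),\Qbar^\times)$ with trivial action. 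The latter group is zero for trivial reasons ($\Qbar^\times$ is divisible and $\Gal(F/\Q)$ is finite cyclic), so in your formulation the ``main obstacle'' would vanish for free and the trivial-central-character hypothesis would play no role --- a sign the setup is wrong. The genuine dichotomy is whether the crossed product is $\mathrm{M}_g(K)$, in which case $\Res_{F/\Q}A\sim B^g$ with $\dim B=g$, $\End_\Q(B)\otimes\Q=K$ and $B\times_\Q F\sim A$, or a division algebra of degree $g$ over $K$, in which case the $\Q$-simple factor has dimension $g^2$, $B\times_\Q F\sim A^g$, and the statement fails. Triviality of the central character of $\pi$ does not exclude the second case, and you give no argument that it does; the paper excludes it structurally: such a division algebra would be the endomorphism algebra of an abelian variety with totally real centre $K$ split by the totally real field $L$, which is impossible by the Albert classification (case (ii) in the proof of Theorem~\ref{thm:av-descent}). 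Note also that the building-block machinery you invoke is designed for the case where Galois acts trivially on $L$ ($\Q$-varieties), whereas here the action is by a generator of $\Gal(L/K)$ --- the opposite extreme, as the paper remarks after Theorem~\ref{thm:av-descent}. Finally, the ``rigidification'' you worry about comes for free in the Weil-restriction construction, since $V_\ell(\Res_{F/\Q}A)=\Ind_{\Q}^{F}V_\ell(A)$, which is exactly how the paper gets the $L$-function identity via Theorem~\ref{thm:auto-induction} and the local-global compatibility of the transfer.
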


\noindent
Combining Theorem~\ref{thm:gross-evidence0} with the Serre Conjecture \cite{ser87, kw09}, we obtain the following:

\begin{thm}\label{thm:prime-degree0} Let $B$ be an abelian variety of prime dimension $g$ over $\Q$ such that $\End_{\Q}(B) = \Z$. 
Suppose that there exist totally real number fields $F$ and $L$ of degree $g$ such that 
\begin{enumerate}[1.]
\item $F$ is cyclic;
\item $\End_F(A)\otimes \Q = L$, where $A = B \times_{\Q} F$;
\item There is a rational prime $\ell\ge 3$ and a (totally) ramified prime $\lambda \mid \ell$ in $L$ such that the
residual Galois representation $\bar{\rho}_{A,\lambda}:\,\Gal(\Qbar/F) \to \GL_2(\F_\ell)$ is surjective. 
\end{enumerate}
Then, $B$ is automorphic. More specifically, there exists a cuspidal Hilbert automorphic representation $\pi$
on $\GL_2(\A_F)$, which lifts to a globally generic cuspidal automorphic representation $\Pi$ on $\GSpin_{2g+1}(\A_\Q)$, 
such that $$L(B, s) = L(\Pi, s).$$
\end{thm}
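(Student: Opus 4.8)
The plan is to deduce Theorem~\ref{thm:prime-degree0} from Theorem~\ref{thm:gross-evidence0} by producing, from the arithmetic hypotheses on $B$, a Hilbert cusp form $\pi$ on $\GL_2(\A_F)$ to which Theorem~\ref{thm:gross-evidence0} applies, with $A = B\times_\Q F$ playing the role of the Eichler--Shimura abelian variety. The first step is modularity over $F$: by hypothesis (3) the residual representation $\bar\rho_{A,\lambda}\colon \Gal(\Qbar/F)\to\GL_2(\F_\ell)$ is surjective, hence absolutely irreducible with big image, so standard modularity lifting theorems over totally real fields (together with residual modularity, which for $\ell=3,5$ follows from Serre's conjecture via solvable base change, or directly from \cite{kw09} adapted to the totally real setting) show that $\rho_{A,\lambda}$ is modular. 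Thus there is a cuspidal Hilbert automorphic representation $\pi$ on $\GL_2(\A_F)$ of weight $2$ and some level $\gN$ whose $\lambda$-adic Galois representation matches $\rho_{A,\lambda}$; since $\End_F(A)\otimes\Q = L$ has degree $g$ over $\Q$, the field of rationality of $\pi$ is $L$, and $A$ is (isogenous to) the abelian variety attached to $\pi$ by Eichler--Shimura.

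The second step is to verify the remaining hypotheses of Theorem~\ref{thm:gross-evidence0} for this $\pi$. Because $A = B\times_\Q F$ is a base change, the isogeny class of $A$ is $\Gal(F/\Q)$-stable, hence so is the compatible system $\{\rho_{A,\lambda}\}$ up to the Galois action on coefficients; by Faltings' isogeny theorem combined with multiplicity one (strong multiplicity one for $\GL_2$ over number fields), the Hecke orbit $[\pi]$ is fixed by $\Gal(F/\Q)=\langle\sigma\rangle$ and $\gN^\sigma=\gN$. The non-base-change condition is where primality of $g$ enters decisively: if $\pi$ were a base change from a proper subfield $F'\subsetneq F$, then since $\Gal(F/\Q)$ is cyclic of prime order $g$ it has no proper nontrivial subgroups, so $F' = \Q$, meaning $\pi = \BC_{F/\Q}(\pi_0)$ for some $\pi_0$ on $\GL_2(\A_\Q)$; this would force $\End_F(A)\otimes\Q$ to be strictly larger than the coefficient field one gets over $\Q$ — concretely $A$ would be isogenous over $F$ to a power of an abelian variety defined over $\Q$ — contradicting either $\End_\Q(B)=\Z$ (so $B$ is $\GL_2$-type over $\Q$ of dimension $g$ only if... ) or the degree-$g$ condition on $L$ together with $g$ prime. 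I would spell this out: a base change $\pi_0$ from $\Q$ has field of rationality contained in a subfield of $L$, and the Galois module $\rho_{A,\lambda}|_{\Gal(\Qbar/F)}$ being the restriction of a $\Gal(\Qbar/\Q)$-representation would make $B$ isogenous to a product of lower-dimensional pieces or of $\GL_2$-type, both incompatible with $\End_\Q(B)=\Z$ and $\dim B = g$ prime with $g\ge 2$.

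The third step is to apply Theorem~\ref{thm:gross-evidence0}: its conclusion (i) gives a subfield $K\subset L$ with $L/K$ cyclic of degree $g$; since $[L:\Q]=g$ is prime, $K=\Q$. Conclusion (ii) then produces the globally generic cuspidal $\Pi$ on $\GSpin_{2g+1}(\A_\Q)$ with field of rationality $K=\Q$, so its Hecke orbit $[\Pi]=\{\Pi\}$ is a single element. Conclusion (iii) gives an abelian variety $B'$ of dimension $g$ over $\Q$ with $B'\times_\Q F\sim A$, $\End_\Q(B')\otimes\Q = \Q$, and $L(B',s)=\prod_{\Pi'\in[\Pi]}L(\Pi',s) = L(\Pi,s)$. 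It remains to identify $B'$ with $B$: both are $g$-dimensional abelian varieties over $\Q$ with $\End_\Q = \Z$ that become isogenous to $A$ over $F$, so $B'$ and $B$ are forms of one another that coincide after base change to $F$; the difference is a class in $H^1(\Gal(F/\Q), \Aut_F(A))$, and arguing as in the descent step of Theorem~\ref{thm:gross-evidence0}'s proof (the uniqueness half of Galois descent of the isogeny class, using that $\End_\Q(B)=\End_\Q(B')=\Z$ pins down the twist) forces $B'\sim B$, whence $L(B,s)=L(\Pi,s)$.

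I expect the main obstacle to be the second step, specifically making the non-base-change argument airtight: one must rule out that $\pi$ descends to $\GL_2(\A_\Q)$ using only $\End_\Q(B)=\Z$ and the primality of $g$, which requires carefully relating the coefficient field $L = \End_F(A)\otimes\Q$, the decomposition of $A$ under the $\Gal(F/\Q)$-action, and the endomorphism algebra of $B$ over $\Q$ — in essence showing that a genuine (non-$\GL_2$-type) abelian variety $B/\Q$ of prime dimension cannot acquire a degree-$g$ multiplication over $F$ that is ``explained'' by a form over $\Q$. A secondary technical point is ensuring the modularity lifting input in the first step is available in the stated generality (weight $2$, totally real $F$, ramified $\lambda\mid\ell$ with $\ell\in\{3,5\}$ allowed); here one invokes the now-standard toolbox (Skinner--Wiles, Kisin, Barnet-Lamb--Gee--Geraghty, Thorne) together with Serre's conjecture over $\Q$ and solvable base change to secure residual modularity, which is exactly why the theorem is stated as a consequence of combining Theorem~\ref{thm:gross-evidence0} with the Serre Conjecture.
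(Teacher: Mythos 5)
Your overall architecture matches the paper's (produce a Hilbert cusp form $\pi$ over $F$ attached to $A$, check the hypotheses of Theorem~\ref{thm:gross-evidence0}, apply it, and use primality of $g$ to force $K=\Q$ so the Hecke orbit of $\Pi$ is a singleton), but your first step has a genuine gap exactly where the theorem's hypothesis (3) is supposed to do its work. You never use the assumption that $\lambda$ is \emph{ramified} in $L$; you only use surjectivity of $\bar{\rho}_{A,\lambda}$ as a ``big image'' condition, and you source residual modularity over the totally real field $F$ either from ``\cite{kw09} adapted to the totally real setting'' (which is not a theorem --- over totally real fields Serre's conjecture \cite{bdj10} is still a conjecture) or from solvable tricks at $\ell=3,5$, while the statement allows an arbitrary prime $\ell\ge 3$. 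The paper's mechanism is different and essential: Theorem~\ref{thm:av-descent1} (Faltings plus the Tate conjecture on endomorphisms, using that $\End_\Q(B)=\Z$ and $g$ prime, so $B$ is not of $\GL_2$-type over any proper subfield) yields a generator $\tau\in\Gal(L/\Q)$ with $a_{\sigma(\gp)}=\tau(a_\gp)$; since $g$ is prime, a ramified $\lambda$ is totally ramified, so $\tau$ fixes $\lambda$ and acts trivially on the residue field, whence $a_{\sigma(\gp)}\equiv a_\gp \pmod{\lambda}$. Thus the (irreducible, by surjectivity) residual representation satisfies $\bar{\rho}_{A,\lambda}\circ\sigma\simeq\bar{\rho}_{A,\lambda}$ and extends to $\Gal(\Qbar/\Q)$, where the proven Serre conjecture \cite{ser87, kw09} applies; residual modularity over $F$ then follows by cyclic base change, and modularity of $\rho_{A,\lambda}$ by \cite[Theorem 1.1]{kt16}. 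Without this descent-to-$\Q$ step your claimed residual modularity over $F$ is unsupported, and the ramification hypothesis in (3) is left unused, which is a sign the argument cannot be right as written.

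A secondary weak point is your step 3. The claim that $\End_\Q(B)=\End_\Q(B')=\Z$ ``pins down the twist'' is false in general: quadratic twists of an elliptic curve become isogenous over a quadratic field without being isogenous over $\Q$, and the relevant obstruction does not vanish merely because the endomorphism rings are $\Z$. What saves the situation here is that $\sigma$ acts on $L=\End_F(A)\otimes\Q$ through a \emph{generator} of $\Gal(L/\Q)$ (this is again Theorem~\ref{thm:av-descent1}), so either one runs a Hilbert 90 argument on the one-dimensional $L$-vector space $\Hom_F(B\times_\Q F, B'\times_\Q F)\otimes\Q$ with its $\tau$-semilinear Galois action, or, more in the spirit of the paper, one notes that $\rho_{B,\ell}\otimes L_\lambda\simeq \Ind_{\Q}^{F}(\rho_{A,\lambda})$ is forced by irreducibility of the induction, and likewise for $B'$, so $B\sim B'$ by Faltings and in any case $L(B,s)=L(\Pi,s)$ directly. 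This part is fixable, but as written it is an assertion rather than a proof.
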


The proof of Theorem~\ref{thm:prime-degree0} will show that Conditions (1) and (2) force $L$ to be cyclic as
a result of $B$ acquiring extra endomorphism after base change to $F$. Since $L/\Q$ is of prime degree, any
prime $\ell$ that ramifies in $L$ will be totally ramified. So, the technical requirement that we impose on $\bar{\rho}_{A,\lambda}$
is rather mild. It allows us to use some the most up to date modularity results available in
the literature, namely, the Serre Conjecture over $\Q$ \cite{ser87, kw09} and results in \cite{kt16, tho17} to show that there is an 
abundance of abelian varieties which verify Conjecture~\ref{conj:gross0}. We illustrate this by giving some examples. But, we also 
refer to \cite{dk16, bdps15} for additional examples of abelian surfaces. We believe that, by combining all modularity 
lifting results that are currently available, one should be able to remove Condition (3) from Theorem~\ref{thm:prime-degree0}. However, 
this would require a case by case detailed analysis that we do not want to perform here at the risk of overshadowing the primary
goal of this paper, which is to provide some evidence for Conjecture~\ref{conj:gross0}. We remark, in closing, that one
potential application of our results (see also Corollary~\ref{cor:gross-evidence1}), and their generalisation, is to the hypergeometric 
abelian varieties described in \cite{dar00} in connection with the Generalised Fermat Equations, see work of 
Billery-Chen-Dieulefait-Freitas \cite{bcdf17} on this problem.

The content of the paper is as follows. In Section~\ref{sec:auto-reps-gln}, we recall some background material on automorphic
representations on $\GL_n$ and their associated Galois representations. In Section~\ref{sec:lifts-hmf}, we prove the existence
of our lift. Section~\ref{sec:descent-ab-av} contains preliminary results on Galois descent of isogeny classes of abelian varieties;
they are results that could be interesting on their own. Section~\ref{sec:gross-conj} is dedicated to showing that our lift is compatible 
with Langlands functoriality; more precisely, we show that Conjecture~\ref{conj:gross0} is compatible with Galois descent of isogeny 
classes. Finally, we provide some illustrating examples in Sections~\ref{sec:examples} and~\ref{sec:ex-nf-gross}.

\subsection*{Acknowledgements} It is a pleasure to thank ICERM and Brown University for their hospitality, and the participants of the 
ICERM 2015 Semester Program on ``Computational Aspects of the Langlands Program'' where we first discussed this work. In particular, 
we would like to thank F.~Shahidi who suggested the strategy we use to construct our lifts, and A.~Brumer and R.~Schmidt for helpful 
discussions. We would also like to thank G.~Chenevier, L.~Dieulefait, N.~Freitas, T.~Gee, X.~Guitart, D.~Loeffler, S.~M\"uller, J.~Tilouine and 
J.~Voight for several helpful email exchanges and conversations. We are grateful to E.~Costa, N.~Mascot, J.~Sijsling and J.~Voight 
for allowing us to use their algorithm for computing endomorphism rings of abelian varieties, and for kindly providing us with
a preliminary version of their work. Finally, we would like to express our deepest gratitude to B.~Gross for his unbounded 
enthusiasm and encouragement during the course of this project.


\section{\bf Automorphic representations for $\GL_n$}\label{sec:auto-reps-gln}
In this section, we recall some results on automorphic representations for $\GL_n$ that we need. 
(We refer to \cite{bggt14a, bggt14b, bggt12} for more details.)
We let $F$ be a totally real number field, and denote by $J_F$ the set of all real embeddings of $F$. For every 
$\ua = (a_\tau)_{\tau \in J_F}$, with $a_\tau = (a_{\tau, 1},\ldots, a_{\tau, n}) \in \Z_{+}^{n}$, such that
$a_{\tau, 1} \ge \cdots \ge a_{\tau, n}$, we let $\Xi_{\ua}$ be the irreducible representation of $\Res_{F/\Q}(\GL_n)$ given by
$$\Xi_{\ua} = \bigotimes_{\tau \in J_F}\Xi_{a_\tau},$$
where $\Xi_{a_\tau}$ is the irreducible representation of $\GL_n$ of highest weight $a_\tau$. We are interested in
the $\ua$ such that $\Xi_{\ua}$ is an algebraic representation of  $\Res_{F/\Q}(\GL_n)$. In that case, there 
exists $n_0 \in \Z_{+}$ such that, for all $1\le i \le n$, 
$$a_{\tau, i} + a_{\tau, n + 1 - i} = n_0.$$

Let $\pi$ be an automorphic representation of $\GL_n(\A_F)$. We say that $\pi$ has weight $\ua$ if $\pi_\infty$
has the same infinitesimal character as $\Xi_{\ua}^\vee$, the dual of $\Xi_{\ua}$. We will be interested in automorphic
representations $\pi$ which satisfy the following conditions:
\begin{enumerate}[1.]
\item $\pi$ is {\it regular algebraic} of weight $\ua$, i.e. the weight $\Xi_{\ua}$ is an algebraic representation of $\Res_{F/\Q}(\GL_n)$;
\item $\pi$ is {\it essentially self dual}, i.e. $\pi^\vee \simeq \pi \otimes \chi$ for some Hecke character 
$\chi:\,F^\times\backslash\A_F^\times \to \C^\times$ such that $\chi_\tau(-1) = (-1)^n$ for all $\tau \in J_F$;
\item $\pi$ is {\it cuspidal}. 
\end{enumerate}
An automorphic representation $\pi$ which satisfies the conditions above will be called {\it RAESDC} (regular, algebraic, essentially self-dual and
cuspidal). We will also use the same terminology for the pair $(\pi, \chi)$. 

\begin{rem}\rm Since strong multiplicity one holds for $\GL_n$, Condition (2) is equivalent to saying that for almost all unramified places 
$v$ we have $\pi_v \simeq \pi_v^\vee$.
\end{rem}

\subsection{ Field of rationality and Hecke orbits} Let $\pi$ be an RAESDC automorphic representation of $\GL_n(\A_F)$, 
and write $\pi = \pi_\infty \otimes \pi_f$, where $\pi_\infty$ is the archmedian component of $\pi$, and $\pi_f$ is finite part.
Let $V$ be the underlying complex vector space of $\pi_f$. For $\tau \in \Aut(\C)$, let $V^{\tau} = V \otimes_{\C, \tau}\C$.
Then $V^{\tau}$ admits a natural admissible representation of $\GL_n(\A_F^f)$, which we denote by $\pi_f^\tau$. So, we
get an action of $\Aut(\C)$ on equivalence classes of admissible representations of $\GL_n(\A_F^f)$. Let $H(\pi_f)$ be
the stabiliser of the class of $\pi_f$ in $\Aut(\C)$, so
$$H(\pi_f) =\left\{\tau \in \Aut(\C): \pi_f^\tau \simeq \pi_f\right\}.$$
The {\it field of rationality} of $\pi$ denoted $L_{\pi}$ is defined by
$$L_{\pi} := \C^{H(\pi_f)}.$$
In general, $L_{\pi}$ is not a number field. But in our step up, this will always be the case thanks to
 \cite[Proposition 3.1]{clo90} and \cite[Theorem 1.11]{clo14}. 

\begin{thm}[Clozel]\label{thm:clozel-fld-rationality} Let $\pi$ be an RAESDC automorphic representation on $\GL_n(\A_F)$. Then, the
field of rationality $L_{\pi}$ is a totally real or {\rm CM} number field. 
\end{thm}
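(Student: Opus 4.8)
The plan is to reduce to the already-known fact (Clozel's purity-type results) that the field of rationality $L_{\pi}$ is a number field, and then to show that the only obstruction to $L_{\pi}$ being totally real or CM is ruled out by the essential self-duality hypothesis together with regularity. First I would recall, from \cite[Proposition 3.1]{clo90} and \cite[Theorem 1.11]{clo14}, that for an RAESDC representation $\pi$ the subgroup $H(\pi_f) \subseteq \Aut(\C)$ is open in the Krull topology, so $L_{\pi}$ is a number field; this is stated in the excerpt just before the theorem and may be assumed. It then suffices to show that complex conjugation $c \in \Aut(\C)$ normalizes $H(\pi_f)$, i.e. that $c\,H(\pi_f)\,c^{-1} = H(\pi_f)$: a number field $L \subseteq \C$ whose corresponding subgroup is normalized by $c$ is exactly one that is either totally real (if $c$ acts trivially on $L$) or CM (if $c$ acts nontrivially but with $c^2 = 1$ on $L$, and the fixed field $L^{c}$ is totally real).

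The key step is therefore the identity $\pi_f^{c\tau c} \simeq \pi_f^{\tau}$ for all $\tau \in \Aut(\C)$, equivalently $c \in \Aut(\C)$ carries the Hecke orbit of $\pi_f$ to itself. Here I would use the essential self-duality: $\pi^\vee \simeq \pi \otimes \chi$ for a Hecke character $\chi$ with the prescribed infinity type. The point is that conjugating the finite part by $c$ has the same effect, up to the twist by an algebraic Hecke character, as passing to the contragredient, because on each unramified local component the Satake parameters of $\pi_v^{c}$ are the complex conjugates of those of $\pi_v$, while the Satake parameters of $\pi_v^\vee$ are the inverses; the identity $\bar z = z^{-1}$ for the relevant eigenvalues follows from purity (the Ramanujan-type bound coming from regular algebraicity and the known unitarity/temperedness at unramified places, via \cite{bggt14a,bggt14b,bggt12}) combined with the fact that $\pi$ has a self-dual twist so those eigenvalues lie on a circle of the expected radius. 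Running this over almost all places and invoking strong multiplicity one for $\GL_n$ (as in the Remark following the conditions) gives $\pi_f^{c} \simeq \pi_f^{\vee} \otimes \chi_f$ up to the algebraic character $\chi$, whose own field of rationality is totally real or CM for elementary reasons. From $\pi_f^{c} \simeq \pi_f^{\vee}\otimes\chi_f$ one reads off that $c$ conjugates $H(\pi_f)$ into $H(\pi_f^{\vee}) = H(\pi_f)$ (contragredient commutes with the $\Aut(\C)$-action), adjusted by the stabilizer of $\chi_f$, which is an open normal-ish subgroup; a short diagram chase then yields $c\,H(\pi_f)\,c^{-1} = H(\pi_f)$.

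I expect the main obstacle to be the purity input: making rigorous that complex conjugation on Satake parameters agrees with inversion, which is where one genuinely needs that $\pi$ is \emph{regular} algebraic and essentially self-dual (not merely algebraic), so that the archimedean infinitesimal character forces the expected weights and the local components at unramified places are tempered. Concretely, this is the step where one cites the existence and local-global compatibility of the associated Galois representations and their self-duality, or alternatively appeals directly to Clozel's original purity lemma; everything after that — the manipulation of stabilizer subgroups in $\Aut(\C)$, the bookkeeping of the twist by $\chi$, and the elementary classification of subfields normalized by $c$ — is formal. A clean way to present it is to first dispose of the twist by reducing to the case $\chi = 1$ (replacing $\pi$ by a suitable self-dual descendant, or simply carrying $\chi$ through), then state the single clean assertion ``$\pi_f^{c} \simeq \pi_f^{\vee}$ up to the CM/totally-real character $\chi_f$'' and deduce the theorem in two lines.
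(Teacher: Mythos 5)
The paper does not actually prove this statement: it is Clozel's theorem, imported wholesale by citation (\cite[Proposition 3.1]{clo90}, \cite[Theorem 1.11]{clo14}), so your sketch has to be judged as a reconstruction of Clozel's argument rather than against any argument in the text. Your central identity $\pi_f^{c}\simeq \pi_f^{\vee}\otimes\chi_f$ is indeed the right engine, but the reduction you build around it has a genuine gap.

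The gap is the claimed criterion that a number field $L\subseteq\C$ whose stabilizer $H(\pi_f)$ is normalized by $c$ is ``exactly'' a totally real or CM field. Normalization by the single element $c$ only gives $c(L_\pi)=L_\pi$, i.e.\ stability of $L_\pi$ under the complex conjugation of the chosen embedding, and this does not characterize totally real or CM fields: $\Q(\sqrt[3]{2})$, embedded in $\C$ via its real embedding, is fixed pointwise by $c$ (so its stabilizer is normalized by $c$), yet is neither totally real nor CM. The correct criterion is that complex conjugation must induce one and the same automorphism of $L_\pi$ through \emph{every} embedding $L_\pi\hookrightarrow\C$, i.e.\ one must control $\tau c\tau^{-1}$ for all $\tau\in\Aut(\C)$, not just $c$. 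To do this you must apply your key identity not only to $\pi$ but to each conjugate $\pi^{\tau}$ --- each of which is again cuspidal and RAESDC; that conjugates of regular algebraic cuspidal representations remain cuspidal automorphic is precisely Clozel's nontrivial input --- so that for almost all unramified $v$ the arithmetically normalized Hecke eigenvalues satisfy $\overline{\tau(a_v)}=\tau(\overline{a_v})$ for every $\tau$; since these eigenvalues generate $L_\pi$ (strong multiplicity one), this is exactly the totally-real-or-CM condition. A smaller point: you should not invoke temperedness at unramified places (the Ramanujan bound is open in this generality); unitarity of each local component of a cuspidal representation already gives $\overline{\pi_v}\simeq\pi_v^{\vee}$, which, together with essential self-duality and the algebraicity of $\chi$, is all the ``purity'' the argument needs.
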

Let $\pi$ be an RAESDC automorphic representation of $\GL_n(\A_F)$. Then the admissible representations $\pi_f$ of $\GL_n(\A_F^f)$ 
has a model $\pi_{f,0}$ over $L_{\pi}$. This model is unique up to scaling, and is stable under 
$\GL_n(\A_F^f)$ (see {\it loc. cit.}). We define the {\it Hecke orbit} of $\pi$ to be the set
\begin{align*}
[\pi] &:= \left\{\pi^\tau =\pi_\infty \otimes \pi_f^\tau : \tau \in \Hom(L_\pi, \C) \right\}.
\end{align*}
We note that, for $\tau \in \Hom(L_\pi, \C)$, the field of rationality of $\pi^\tau$ is $L_{\pi^\tau} := \tau(L_\pi)$, and
that $\pi^\tau_{f,0} = \pi_{f,0}\otimes_{L_\pi, \tau} L_{\pi^\tau}$ is a model for $\pi_f^\tau$.

\subsection{Galois representations} The following result is the culmination of the work of lots of people 
(see \cite[Theorem 2.1.1]{bggt14b} and reference therein.)

\begin{thm}\label{thm:galois-reps-for-gln} Let $(\pi,\chi)$ be an RAESDC automorphic representation of weight $\ua$ 
on $\GL_n(\A_F)$. Let $\lambda$ be a prime of $L_{\pi}$ with residue characteristic $\ell$, and fix an isomorphism 
$\iota:\,\overline{L}_{\pi,\lambda} \simeq \C$. Then, there exists a Galois representation 
$$r_\lambda(\pi): G_F \to \GL_n(\overline{L}_{\pi,\lambda}),$$ such that 
\begin{enumerate}[(i)]
\item $r_{\lambda}(\pi)^\vee \simeq r_{\lambda}(\pi)r_{\lambda}(\chi)$;
\item $\WD(r_\lambda(\pi)|_{G_{F_v}})^{\text{\rm F-ss}} \simeq \rec_{F_v}(\pi_v\otimes |\!\det\!|^{(1-n)/2})$;
\item For each $v\mid \ell$, $r_\lambda(\pi)|_{G_{F_v}}$ is de Rham, and for each embedding $\tau:\,F \hookrightarrow \overline{L}_{\pi,\lambda}$, 
the Hodge-Tate weights are given by
$$\mathrm{HT}_\tau(r_\lambda(\pi)) = \big\{a_{\tau, n}, a_{\tau, n - 1} + 1,\, \ldots,\,a_{\tau, 1} + n - 1\big\}.$$
\end{enumerate}
These conditions determine $r_{\lambda}(\pi)$ uniquely up to isomorphism. 
\end{thm}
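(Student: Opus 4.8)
Since this statement is the joint output of a large body of work (Kottwitz, Clozel, Harris--Taylor, Shin, Chenevier--Harris, Caraiani, and others, as organised in \cite[Theorem 2.1.1]{bggt14b}), the plan is not to reprove it but to indicate the architecture: realise $r_\lambda(\pi)$ inside the $\lambda$-adic cohomology of unitary Shimura varieties. First I would reduce to a conjugate self-dual situation over a CM field. I would choose a totally imaginary quadratic extension $F'/F$ in which every prime of ramification of $\pi$ and every prime above $\ell$ splits, twisting $\pi$ by an auxiliary quadratic character if necessary so that the polarisation type --- fixed by the sign in Condition (2), i.e.\ by $\chi_\tau(-1)=(-1)^n$ --- becomes the one carried by a similitude unitary group rather than by an orthogonal group. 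The base change $\Pi := \BC_{F'/F}(\pi)$ is then regular algebraic, conjugate self-dual and cuspidal on $\GL_n(\A_{F'})$; that $\Pi$ exists and stays cuspidal is itself a deep input, resting on the stabilisation of the twisted trace formula (Arthur, Labesse, Mok, Shin--White).

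Next I would descend $\Pi$ to a similitude unitary group $H$ attached to $F'/F$, chosen so that $H(F\otimes_\Q\R)$ has signature $(n-1,1)$ at one archimedean place and is definite at the others; this again uses the endoscopic classification. The group $H$ carries a Shimura variety $\mathrm{Sh}_H$ of dimension $n-1$, and --- after a harmless solvable base change to tame the ramified places, in the spirit of \cite{bggt12} --- the descent of $\Pi$ will contribute non-trivially to the \'etale cohomology $H^{n-1}(\mathrm{Sh}_H\times_{F'}\Qbar,\cL_{\ua})$ of the automorphic local system $\cL_{\ua}$, on which $G_{F'}$ acts through a sum of copies of an $n$-dimensional $\lambda$-adic representation $r$. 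To promote $r$ from $G_{F'}$ to the full group $G_F$ I would run this construction over a family of such CM fields $F'$ and patch, using the standard fact that a compatible system of representations of the various $G_{F'}$ descends to $G_F$; this yields $r_\lambda(\pi)$.

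It then remains to extract the three listed properties together with uniqueness. For (ii), at primes $v\nmid\ell$ outside the ramification set this is Kottwitz's matching of Satake parameters with Frobenius eigenvalues on $H^{n-1}(\mathrm{Sh}_H)$; at the remaining finite places, \emph{including} the monodromy operator, I would invoke the theorems of Taylor--Yoshida, Shin and Caraiani, and at $v\mid\ell$ Caraiani's theorem together with \cite{bggt14a}, obtaining $\WD(r_\lambda(\pi)|_{G_{F_v}})^{\text{\rm F-ss}} \simeq \rec_{F_v}(\pi_v\otimes|\!\det\!|^{(1-n)/2})$ with the usual arithmetic normalisation. For (iii) I would apply the de Rham comparison isomorphism to $\cL_{\ua}$, reading off the Hodge--Tate weights $\{a_{\tau,n},\, a_{\tau,n-1}+1,\, \ldots,\, a_{\tau,1}+n-1\}$. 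For (i) I would use Poincar\'e duality on $\mathrm{Sh}_H$ twisted by the similitude character, which translates on Galois into $r_\lambda(\pi)^\vee \simeq r_\lambda(\pi)\, r_\lambda(\chi)$, with $r_\lambda(\chi)$ the character class field theory attaches to $\chi$. Uniqueness is then formal: by (ii) the trace of $r_\lambda(\pi)(\Frob_v)$ at almost all $v$ is determined by $\pi_v$, so Brauer--Nesbitt and Chebotarev pin $r_\lambda(\pi)$ down up to semisimplification, and its known irreducibility (Taylor--Yoshida, \cite{bggt14b}) removes the remaining ambiguity.

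The hard part will be none of these steps in isolation but the coordination of the bad-place analysis: establishing full local--global compatibility \emph{with} the monodromy operator at the primes of ramification and at the primes above $\ell$ is the technical heart of the subject, and the preliminary reductions --- the choice of CM field, the quadratic twist, the solvable base changes, and the patching over the $F'$ --- must all be arranged so as to carry this information through intact rather than lose it along the way.
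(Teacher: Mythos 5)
The paper does not prove this theorem at all: it is quoted from the literature with a pointer to \cite[Theorem 2.1.1]{bggt14b}, and your outline is essentially the architecture of that cited proof (reduction to a conjugate self-dual situation over CM fields, descent to unitary similitude groups, realisation in the cohomology of the associated Shimura varieties, patching over the auxiliary CM extensions, and local--global compatibility via Kottwitz, Taylor--Yoshida, Shin, Caraiani and \cite{bggt12, bggt14a}), so you are taking the same route as the source the paper relies on. One caveat: your final appeal to irreducibility (attributed to Taylor--Yoshida and \cite{bggt14b}) overstates what is known for general $n$, so the uniqueness assertion should be read as resting on Chebotarev plus Brauer--Nesbitt, i.e.\ determination of the semisimplification, which is how the cited references state it.
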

The requirement that $\pi$ be essentially self-dual in Theorem~\ref{thm:galois-reps-for-gln} has been removed in recent work of Harris-Lan-Taylor-Thorne \cite[Theorem A]{hltt16}
(also see \cite[Theorem 5.1.4]{sch15}), but we will not need this here. However, we will be dealing with automorphic representations that
are essentially self-dual and cuspidal but {\it not} regular (AESDC), and for which Theorem~\ref{thm:galois-reps-for-gln} will still be true. So, for the
remainder of this section, we will assume that the Langlands correspondence is known for all our automorphic representations. 

\subsection{Base change for $\GL_n$} We recall that, if $F/E$ is a Galois extension, then there is a natural action of $\Gal(F/E)$ 
on the set of automorphic representations on $\GL_n(\A_F)$ given by
$${}^\sigma\!\pi := \pi \circ \sigma.$$
If $\pi$ has weight $\ua$, then ${}^\sigma\!\pi$ has weight $\sigma\cdot\ua := (a_{\sigma\tau})$, $\sigma \in \Gal(F/E)$, with 
$$\Xi_{\sigma\cdot\ua} = \bigotimes_{\tau \in J_F}\Xi_{a_{\sigma\tau}}.$$ 
Solvable base change for $\GL_n$ for cyclic extensions of prime degree is due to Arthur-Clozel \cite{ac89}. Their results were 
extended to general cyclic extensions by Henniart \cite{hen12}. 

\begin{thm}[Cyclic base change]\label{thm:base-change}
Let $F/E$ be a cyclic extension of totally real fields. Let $\pi$ be a cuspidal automorphic representation of $\GL_n(\A_E)$ of weight $\ua$. 
Then there exists a cuspidal automorphic representation $\Pi$ of $\GL_n(\A_F)$ such that
\begin{enumerate}[(i)]
\item For all finite places $v$ of $F$, $\rec_{F_v}(\Pi_v) = \rec_{E_{w}}(\pi_w)|_{W_{F_v}}$, where $v \mid w$.
In particular, we have $r_\lambda(\Pi) \simeq r_\lambda(\pi)|_{G_F}$. 
\item $\Pi \simeq \Pi\circ \sigma$, where $\Gal(F/E) =\langle \sigma \rangle$. 
\end{enumerate}
The automorphic representation $\Pi$ is uniquely determined by those two conditions. It is called the {\rm base change} of $\pi$
from $E$ to $F$, and denoted by $\BC_{F/E}(\pi)$. It satisfies the additional property:
\begin{enumerate}
\item[(iii)] $\BC_{F/E}(\pi) \simeq \BC_{F/E}(\pi')$ if and only if $\pi \simeq \pi'\otimes (\delta_{F/E}^i\circ \Art_E\circ \det)$ for some $i$,
where $\Gal(F/E)^\vee = \langle \delta_{F/E}\rangle$ is the dual of $\Gal(F/E)$.
\item[(iv)] The weight $\BC_{F/E}(\ua)$ of $\Pi$ is given by $\BC_{F/E}(\ua)_{\tau} = a_{\tau'}$ where $\tau' = \tau|_{E}$.
\end{enumerate}
\end{thm}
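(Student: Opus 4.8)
The plan is to reduce to the prime-degree case, feed in the Arthur--Clozel twisted-trace-formula comparison, and then deduce the Galois-theoretic and archimedean statements formally. Since $F/E$ is cyclic, fix a tower $E = E_0 \subset E_1 \subset \cdots \subset E_r = F$ with each $E_{i+1}/E_i$ cyclic of prime degree $p_i$; write $\langle\sigma_i\rangle = \Gal(E_{i+1}/E_i)$ and let $\eta_i$ be an idele class character of $E_i$ cutting out $E_{i+1}$. For one such step $F'/E'$ the comparison of the $\sigma'$-twisted trace formula on $\GL_n(\A_{F'})$ with the ordinary trace formula on $\GL_n(\A_{E'})$ attaches to each cuspidal $\pi'$ on $\GL_n(\A_{E'})$ an automorphic representation $\BC_{F'/E'}(\pi')$ satisfying the local relations in (i); it is cuspidal exactly when $\pi'\not\cong\pi'\otimes(\eta'\circ\Art_{E'}\circ\det)$, and is otherwise an isobaric sum of $\Gal(F'/E')$-conjugate cuspidal representations of $\GL_{n/p'}$ (where $p'=[F':E']$ then divides $n$). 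One composes these steps up the tower; Henniart's argument shows the outcome is independent of the chosen tower, and that the final $\Pi$ is cuspidal precisely when $\pi$ admits no self-twist $\pi\cong\pi\otimes(\mu\circ\Art_E\circ\det)$ by a nontrivial character $\mu$ of $\Gal(F/E)$ -- a condition satisfied in all cases relevant to this paper. (Note $\Pi$ remains essentially self-dual, with $\BC_{F/E}(\chi)$ in place of $\chi$, so Theorem~\ref{thm:galois-reps-for-gln} applies to it.)

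For the local assertion (i): at the (almost all) places $w$ where everything is unramified, unramified base change is the diagonal map on Satake parameters, which is exactly the relation $\rec_{F_v}(\Pi_v) = \rec_{E_w}(\pi_w)|_{W_{F_v}}$; at the remaining finitely many places this is the compatibility of local cyclic base change for $\GL_n$ with the local Langlands correspondence, pinned down by $L$- and $\varepsilon$-factors. Property (ii), $\Pi\cong\Pi\circ\sigma$, is built into the construction, the image of base change being the set of $\sigma$-stable representations. Granting (i), the identity $r_\lambda(\Pi)\simeq r_\lambda(\pi)|_{G_F}$ is formal: by Theorem~\ref{thm:galois-reps-for-gln}(ii) both sides are $n$-dimensional $\lambda$-adic representations of $G_F$ whose Frobenius-semisimple Weil--Deligne parameters agree at every finite place (the $|\det|^{(1-n)/2}$ normalisation being compatible with restriction of Weil groups), and a $\lambda$-adic representation of $G_F$ is determined by its Frobenius traces on a density-one set of places by Brauer--Nesbitt and Chebotarev.

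Uniqueness of $\Pi$ follows from strong multiplicity one for isobaric automorphic representations of $\GL_n$, since (i) determines $\Pi_v$ at almost every $v$. The fibre description (iii) is itself part of the Arthur--Clozel/Henniart package: the fibre of base change over a given $\sigma$-stable cuspidal $\Pi$ is either empty or a single orbit for twisting by the characters of $\Gal(F/E)$ pulled back via $\Art_E\circ\det$; heuristically, $\BC_{F/E}(\pi)\cong\BC_{F/E}(\pi')$ forces $\rec_{E_w}(\pi_w)|_{W_{F_v}}\cong\rec_{E_w}(\pi'_w)|_{W_{F_v}}$ for almost all $w$, and at the inert places Clifford theory exhibits the local twist $\delta_{F/E}^{\,j}\circ\Art_E$, which trace-formula rigidity then globalises to a single $j$; the converse is immediate since such a twist leaves the restriction of $\rec(\pi_w)$ to $W_{F_v}$ unchanged. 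Finally (iv) is the archimedean counterpart of (i): archimedean base change sends the parameter $\phi_{\pi_\infty,w}\colon W_{E_w}\to\GL_n(\C)$ to its restriction along $W_{F_v}\hookrightarrow W_{E_w}$, which on infinitesimal characters is the relabelling $\BC_{F/E}(\ua)_\tau = a_{\tau|_E}$; equivalently one reads it off the Hodge--Tate weights of $r_\lambda(\Pi) = r_\lambda(\pi)|_{G_F}$ using Theorem~\ref{thm:galois-reps-for-gln}(iii).

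The one genuinely deep input is the Arthur--Clozel trace-formula comparison underlying prime-degree cyclic base change \cite{ac89}, together with Henniart's bootstrap to arbitrary cyclic degree and the compatibility of local base change with local Langlands \cite{hen12}; granting this, parts (i)--(iv) and uniqueness are formal consequences of strong multiplicity one, Chebotarev, and Theorem~\ref{thm:galois-reps-for-gln} -- the last granted here also in the non-regular AESDC case. The only point requiring care in our setting is the cuspidality clause, to be understood under the no-self-twist hypothesis isolated above.
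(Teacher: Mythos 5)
Your proposal is correct and takes essentially the same route as the paper: the theorem is not proved there but quoted from Arthur--Clozel \cite{ac89} and Henniart \cite{hen12}, with the local identities, the uniqueness via strong multiplicity one, the relation $r_\lambda(\Pi)\simeq r_\lambda(\pi)|_{G_F}$ via Chebotarev and local--global compatibility, and the archimedean weight recipe (iv) being exactly the formal consequences you spell out. Your caveat on the cuspidality clause is well taken and is the one genuine imprecision in the statement as printed: $\BC_{F/E}(\pi)$ is cuspidal only when $\pi$ admits no self-twist by a nontrivial character of $\Gal(F/E)$ (equivalently, is not automorphically induced from a proper subextension), a hypothesis the paper leaves implicit but which is satisfied in all of its applications.
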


\subsection{Automorphic induction} In {\it loc. cit.}, Arthur-Clozel also proved automorphic induction for cyclic extensions $F/E$ of prime degree.
This result was also extended to general cyclic extensions in \cite{hen12}.

\begin{thm}[Automorphic induction]\label{thm:auto-induction} Let $F/E$ be a cyclic extension of totally number fields of degree $g$, and $\pi$ 
a cuspidal automorphic representation of $\GL_m(\A_F)$, induced from cuspidal, such that $\pi$ is not a base change from any sub-extension $F'/E$ of $F$. Then, there exists 
a cuspidal automorphic representation $\Pi$ of $\GL_{mg}(\A_E)$, induced from cuspidal, which satisfies the following properties:
\begin{enumerate}[(i)]
\item $\BC_{F/E}(\Pi) = \pi \boxplus \pi^\sigma \boxplus \cdots \boxplus \pi^{\sigma^{g-1}}$;
\item $\Pi \simeq \Pi \otimes (\delta_{F/E}\circ \Art_E\circ \det)$. 
\end{enumerate}
The automorphic representation $\Pi$, which is uniquely determined by Conditions (i) and (ii) up to isomorphism, is called 
the {\rm automorphic induction} of $\pi$, and denoted by $\AI_{E/F}(\pi)$. It satisfies the additional property:
\begin{enumerate}
\item[(iii)] For all finite places $w$ of $E$, we have $\rec_{w}(\AI_{E/F}(\pi)_w) = \rec_{w}(\AI_{E_w/F_v}(\pi_v))$, where $w \mid v$.
\end{enumerate}
\end{thm}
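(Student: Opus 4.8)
The plan is to follow the classical construction of automorphic induction for cyclic extensions of prime degree (Arthur--Clozel \cite{ac89}), and then bootstrap to arbitrary cyclic degree by factoring the extension into a tower of prime-degree steps (as in Henniart \cite{hen12}). First I would reduce to the case where $g = [F:E]$ is prime: if $g = g_1 g_2$ with $g_1$ prime, let $F'/E$ be the unique intermediate field with $[F':E] = g_1$ and $[F:F'] = g_2$; apply automorphic induction from $F$ to $F'$ (available by induction on the degree once the prime case is known, using that $\pi$ is not a base change from any sub-extension of $F/F'$, which follows from the hypothesis) to obtain a cuspidal $\Pi' $ on $\GL_{mg_2}(\A_{F'})$, check that $\Pi'$ is not a base change from any proper sub-extension of $F'/E$ (this is where Condition (iii) on $\rec$, together with the base-change compatibility (i), is used), and then apply automorphic induction from $F'$ to $E$. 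Compatibility of the two constructions in the tower, i.e. $\AI_{E/F} = \AI_{E/F'}\circ\AI_{F'/F}$, is then checked place by place using (iii) and the analogous transitivity of $\rec$ for local Weil groups.

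For the prime-degree case I would invoke the Arthur--Clozel construction directly: the representation $\pi \boxplus \pi^\sigma \boxplus \cdots \boxplus \pi^{\sigma^{g-1}}$ on $\GL_{mg}(\A_F)$ is $\Gal(F/E)$-invariant (since $\sigma$ permutes the summands cyclically), hence by the descent/characterization in cyclic base change (Theorem~\ref{thm:base-change}) it descends to $\GL_{mg}(\A_E)$; the hypothesis that $\pi$ is not a base change from any proper sub-extension of $F/E$ is exactly what guarantees the descent $\Pi$ is cuspidal rather than an isobaric sum (this is the key input of \cite{ac89}). Property (ii), $\Pi \simeq \Pi\otimes(\delta_{F/E}\circ\Art_E\circ\det)$, comes from the fact that twisting by $\delta_{F/E}$ does not change the base change $\BC_{F/E}(\Pi)$, combined with the uniqueness in Theorem~\ref{thm:base-change}(iii) and the observation that the $g$ twists $\Pi\otimes(\delta_{F/E}^i\circ\Art_E\circ\det)$ are the $g$ distinct cuspidal representations of $\GL_{mg}(\A_E)$ with the given base change, all equal because $\Pi$ itself is the unique one fixed appropriately. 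The local property (iii) is built in by constructing $\rec_w(\Pi_w)$ as $\Ind_{W_{F_v}}^{W_{E_w}}\rec_v(\pi_v)$ and checking this is compatible with base change locally, which is automatic from the local Langlands correspondence for $\GL_n$ and Mackey theory.

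Uniqueness is the easy part: if $\Pi_1, \Pi_2$ both satisfy (i) and (ii), then $\BC_{F/E}(\Pi_1) = \BC_{F/E}(\Pi_2)$, so by Theorem~\ref{thm:base-change}(iii) we have $\Pi_2 \simeq \Pi_1\otimes(\delta_{F/E}^j\circ\Art_E\circ\det)$ for some $j$; but (ii) forces both to be fixed by twisting by $\delta_{F/E}$, and the only way a cuspidal representation of $\GL_{mg}$ can be both isomorphic to a nontrivial-character twist of $\Pi_1$ and fixed by that same twist is if $j = 0$, using that cuspidal representations have no self-twists by characters of order dividing $g$ unless they are themselves induced — and here the non-base-change hypothesis on $\pi$ rules that out at the level of $\Pi$ having exactly $g$ distinct twists with a common base change.

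I expect the main obstacle to be the transitivity/compatibility step in the tower reduction: ensuring that after the first induction step $\AI_{F'/F}(\pi)$ remains ``not a base change from any proper sub-extension of $F'/E$'' requires carefully tracking, via the local description (iii) and the global description (i), which Galois characters the intermediate representation is invariant under — the non-base-change condition on $\pi$ over $F/E$ has to be correctly propagated, and one must rule out the degenerate case where the intermediate induction becomes non-cuspidal or acquires unexpected self-twists. This is precisely the bookkeeping that Henniart \cite{hen12} carries out, so in practice I would cite that reference for the general cyclic case and only spell out the prime-degree construction and the uniqueness argument in detail.
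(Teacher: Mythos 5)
The paper offers no proof of Theorem~\ref{thm:auto-induction}: it is imported from the literature, with the prime-degree case credited to Arthur--Clozel \cite{ac89} and the general cyclic case to Henniart \cite{hen12}, and the remark following the statement records that Henniart's construction combines base change with Galois descent. Since your proposal ultimately defers the analytic core to exactly these references, you and the paper are at bottom doing the same thing; the differences lie in the sketch you wrap around the citation, and there two points deserve correction. First, in the prime-degree step, the claim that $\pi\boxplus\pi^\sigma\boxplus\cdots\boxplus\pi^{\sigma^{g-1}}$ ``descends by the characterization in Theorem~\ref{thm:base-change}'' is not available: that theorem, as stated, gives existence of base change for cuspidal representations of $\GL_n(\A_E)$ and describes its fibres, whereas descending a $\Gal(F/E)$-invariant, \emph{non-cuspidal} isobaric sum on $\GL_{mg}(\A_F)$ to a cuspidal representation of $\GL_{mg}(\A_E)$ is precisely the content of automorphic induction and rests on the twisted trace formula comparison of \cite{ac89}; it is not a formal consequence of the base-change statement. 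Second, the reduction to prime degree through a tower $F\supset F'\supset E$ is exactly where naive arguments are known to break: propagating the ``not a base change from any proper sub-extension'' hypothesis and the characterizing properties (i)--(ii) through the intermediate stage is the reason \cite{hen12} does not iterate the prime case but instead treats general cyclic degree directly via base change (which does behave well in towers) and Galois descent. You flag this difficulty and say you would cite \cite{hen12} for it, which is the right call, but be aware that the citation then carries the entire general case, not mere bookkeeping.

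Your uniqueness argument also needs repair, although the conclusion is correct and in fact easier than you make it: if $\Pi_1$ satisfies (ii), then $\Pi_1\otimes(\delta_{F/E}^j\circ\Art_E\circ\det)\simeq\Pi_1$ for every $j$, so any cuspidal $\Pi_2$ with the same base change satisfies $\Pi_2\simeq\Pi_1\otimes(\delta_{F/E}^j\circ\Art_E\circ\det)\simeq\Pi_1$ by Theorem~\ref{thm:base-change}(iii), with no case analysis on $j$. Your appeal to ``cuspidal representations have no self-twists unless induced'' is internally inconsistent here: property (ii) says $\Pi$ \emph{does} have the self-twist by $\delta_{F/E}$, and $\Pi$ \emph{is} induced by construction, so nothing is ruled out by that principle, and the assertion that $\Pi$ has $g$ distinct twists with a common base change contradicts (ii) itself.
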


\begin{rem}\rm We have stated Theorem~\ref{thm:auto-induction} in such a way that the characterising properties
of automorphic induction mirror those of base change. Not only is this very useful from a practical point of view, but
also these functorial properties guide the proof in \cite{hen12}, where automorphic  induction is constructed by 
combining base change with Galois descent.
\end{rem}

\subsection{Field of rationality of automorphic induction} 
It will be important for us to know the field of rationality of the automorphic representations we obtain from induction. 
So, we will need the following lemma.

\begin{lem}\label{lem:fld-of-rationality} Let $\pi$ be a RAESDC automorphic representation of $\GL_m(\A_F)$ such that $\pi$ is not a base change from
any proper sub-extension $F'/E$ of $F$. Let $L_{\pi}$ and $L_{\Pi}$ be the fields of rationality of $\pi$ and $\Pi = \AI_{E/F}(\pi)$, respectively. Then,
$L_{\Pi}$ is a subfield of $L_\pi$. If further $[\pi] = [{}^\sigma\!\pi]$ (i.e. the Hecke orbit of $\pi$ is preserved 
by $\Gal(F/E)$), then $L_{\pi}$ is a cyclic extension of $L_{\Pi}$ of degree $g$. 
\end{lem}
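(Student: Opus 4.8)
The plan is to understand the Galois action on the finite parts of $\pi$ and $\Pi = \AI_{E/F}(\pi)$ through the Langlands correspondence, translating the statement about fields of rationality into a statement about Galois conjugates of the associated representations. The key observation is that automorphic induction commutes with the action of $\Aut(\C)$: for $\tau \in \Aut(\C)$, the $\tau$-conjugate $\pi_f^\tau$ is again RAESDC, is not a base change from any proper sub-extension (this property is preserved under conjugation, since by Theorem~\ref{thm:base-change}(iii) being a base change is detected by an isomorphism of finite parts up to twist by a character with finite image, which is fixed by $\Aut(\C)$), and its automorphic induction is $\Pi_f^\tau$. This last point follows because automorphic induction is characterised by Conditions (i) and (ii) of Theorem~\ref{thm:auto-induction}, and both the base change relation $\BC_{F/E}(\Pi) = \pi \boxplus \pi^\sigma \boxplus \cdots \boxplus \pi^{\sigma^{g-1}}$ and the self-twist relation are preserved by applying $\tau$, once one notes that ${}^\sigma\!\pi$ has finite part $({}^\sigma\!\pi)_f$ with $(({}^\sigma\!\pi)_f)^\tau = {}^\sigma(\pi_f^\tau)$.

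From this compatibility, I would first deduce $L_{\Pi} \subseteq L_\pi$, i.e. $H(\pi_f) \subseteq H(\Pi_f)$: if $\tau$ fixes the class of $\pi_f$, then it fixes the class of its automorphic induction, so it fixes the class of $\Pi_f$. Conversely, suppose $\tau \in H(\Pi_f)$, so $\Pi_f^\tau \simeq \Pi_f$. Applying $\BC_{F/E}$ and using that $\BC_{F/E}(\Pi^\tau) = \BC_{F/E}(\Pi)^\tau$ together with Theorem~\ref{thm:auto-induction}(i), we get
$$\pi_f^\tau \boxplus ({}^\sigma\!\pi_f)^\tau \boxplus \cdots \simeq \pi_f \boxplus {}^\sigma\!\pi_f \boxplus \cdots.$$
By strong multiplicity one (more precisely, uniqueness of the decomposition into cuspidal constituents for isobaric representations), $\pi_f^\tau$ must be isomorphic to one of the ${}^{\sigma^j}\!\pi_f$; that is, $\tau$ permutes the Hecke orbit of $\pi_f$ among the $\Gal(F/E)$-translates. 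Under the hypothesis $[\pi] = [{}^\sigma\!\pi]$, all these $\Gal(F/E)$-translates lie in the single Hecke orbit $[\pi]$, so $\tau$ stabilises $[\pi]$ setwise, i.e. $\tau(L_\pi) = L_\pi$, so $\tau$ is an automorphism of the number field $L_\pi$. Thus $H(\Pi_f) \cap \Aut(\C/\Q)$ acts on $L_\pi$, and its restriction gives a subgroup $G \subseteq \Aut(L_\pi/L_\Pi)$; since $L_\pi = \C^{H(\pi_f)}$ and $H(\pi_f) \subseteq H(\Pi_f)$, one checks $L_\Pi = L_\pi^{G}$, so $L_\pi/L_\Pi$ is a \emph{Galois} extension with group $G$.

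It remains to identify $G$ with $\Gal(F/E) \cong \Z/g\Z$, in particular to show $L_\pi/L_\Pi$ is cyclic of degree exactly $g$. I would construct a surjection from $\Gal(F/E)$ onto $G$ by the following device: the condition $[\pi] = [{}^\sigma\!\pi]$ means there exists $\tau_0 \in \Aut(\C)$, restricting to an element of $\Aut(L_\pi)$, with $\pi_f^{\tau_0} \simeq {}^\sigma\!\pi_f$; then $\tau_0 \in H(\Pi_f)$ by the compatibility above (since ${}^\sigma\!\pi$ and $\pi$ have the same automorphic induction by the $\Gal(F/E)$-invariance built into Theorem~\ref{thm:auto-induction}), so $\tau_0|_{L_\pi} \in G$. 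Sending $\sigma \mapsto \tau_0|_{L_\pi}$ is well-defined modulo $H(\pi_f)|_{L_\pi} = \{1\}$ and is a group homomorphism $\Gal(F/E) \to G$; it is surjective because any $\tau \in G$ satisfies $\pi_f^\tau \simeq {}^{\sigma^j}\!\pi_f \simeq \pi_f^{\tau_0^j}$ for some $j$, whence $\tau = \tau_0^j$ in $G$. Since $g$ is prime in our main application — but in general $g$ is merely cyclic — the map $\Gal(F/E) \to G$ is either an isomorphism or has nontrivial kernel; a nontrivial kernel would force, after a short argument, that $\pi$ is already a base change from the corresponding proper sub-extension $F'/E$ via Theorem~\ref{thm:base-change}(iii) (if $\sigma^{g/d}$ acts trivially on $[\pi]$, then $\pi \simeq \pi \otimes$ (character) and descends), contradicting our hypothesis. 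The main obstacle, and the place requiring the most care, is exactly this last step: ruling out that the natural map $\Gal(F/E) \to G$ has a kernel, which amounts to showing that the self-twist relations forced by a kernel would contradict $\pi$ not being a base change. I would phrase this carefully using Theorem~\ref{thm:base-change}(iii), taking $E' $ to be the fixed field of the kernel.
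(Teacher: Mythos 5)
Your proposal is correct and follows essentially the same route as the paper: read off the cuspidal support of $\Pi = \AI_{E/F}(\pi)$ as the $\Gal(F/E)$-conjugates of $\pi$ to get $L_\Pi \subseteq L_\pi$, use the hypothesis $[\pi]=[{}^\sigma\!\pi]$ to produce $\tau_0 \in \Aut(L_\pi)$ with ${}^\sigma\!\pi \simeq \pi^{\tau_0}$, show $\sigma \mapsto \tau_0$ is injective because $\pi$ is not a base change from any proper sub-extension, and identify $L_\Pi$ with the fixed field of the resulting cyclic group of order $g$ (your two-sided verification of $L_\Pi = L_\pi^{G}$ via uniqueness of cuspidal constituents is exactly the detail the paper leaves as ``it is not hard to see''). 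One small repair: in the injectivity step, a kernel element $\sigma^j$ gives ${}^{\sigma^j}\!\pi \simeq \pi$ (not a self-twist of $\pi$ by a character), and the descent to $F' = F^{\langle\sigma^j\rangle}$ uses the surjectivity of cyclic base change onto Galois-invariant cuspidal representations (Arthur--Clozel/Henniart), not Theorem~\ref{thm:base-change}(iii), which describes the fibres of base change.
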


\begin{proof} By Theorem~\ref{thm:clozel-fld-rationality}, $L_\pi$ is a number field, which is either totally real or CM, and by 
definition of base change, we have  $L_{\pi} = L_{{}^\sigma\!\pi}$. From Theorem~\ref{thm:auto-induction} (1), we see 
that $L_{\Pi}$ is the smallest  field over which the cuspidal support of $\Pi$ is defined. Since all the $\Gal(F/E)$-conjugate 
of $\pi$ are defined over $L_\pi$, we conclude that $L_{\Pi}$ must be a subfield of $L_{\pi}$. This gives the first assertion. To prove the second one, observe that
if $[\pi] = [{}^\sigma\!\pi]$, then there exists $\tau \in \Hom(L_{\pi}, \C)$ such that ${}^\sigma\!\pi = \pi^\tau.$ By uniqueness of models, this implies that 
${}^\sigma\!\pi_{f,0} = \pi_{f,0}^\tau$. Hence, $L_{\pi} = L_{\pi^\tau}$ and $\tau \in \Aut(L_\pi)$. The map $\sigma \mapsto \tau$ must be an injection. 
Otherwise,  this would contradict the fact that the stabiliser of $\pi$ in $\Gal(F/E)$ is trivial. Let $H$ be the subgroup
of $\Aut(L_\pi)$ generated by $\tau$ and $K = L_\pi^{H}$. Then, $\Pi$ is clearly defined over $K$, and it is not hard to see that $K$ is the smallest such field.
So $L_\Pi = K$. 
\end{proof}

\begin{rem}\rm In general, it is a very delicate matter to determine the field of rationality of an automorphic representation $\Pi$
whose infinite components $\Pi_v$, $v \mid \infty$, are limit of discrete series (see \cite{bhr94, st14} for example) as is the case in our applications to
abelian varieties. However, since our representations are automorphic inductions of representations whose weights are regular, 
Lemma~\ref{lem:fld-of-rationality} ensures that their fields of rationality are still totally real or CM. 
\end{rem}

\begin{lem}\label{lem:self-dual} Let $(\pi,\chi)$ be an essentially self-dual cuspidal automorphic representation on $\GL_{m}(\A_F)$.
Then $(\AI_{E/F}(\pi), \chi_{|E})$ is an essentially self-dual cuspidal automorphic representation on $\GL_{mg}(\A_E)$. 
\end{lem}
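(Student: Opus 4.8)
The plan is to deduce essential self-duality of $\AI_{E/F}(\pi)$ from the corresponding property of $\pi$ together with the compatibility of automorphic induction with base change recorded in Theorem~\ref{thm:auto-induction}. Write $\Pi = \AI_{E/F}(\pi)$, which is cuspidal on $\GL_{mg}(\A_E)$ by that theorem. Since $\pi^\vee \simeq \pi \otimes \chi$ with $\chi$ a Hecke character of $\A_F^\times$ satisfying $\chi_\tau(-1) = (-1)^m$ for all $\tau \in J_F$, I first need to produce the twisting character on the $E$-side. The natural candidate is $\chi|_E := \chi \circ \Nm_{F/E}$ restricted appropriately; more precisely, one wants a Hecke character $\eta$ of $\A_E^\times$ with $\BC_{F/E}(\eta) = \chi$ — equivalently $\eta \circ \Nm_{F/E} = \chi$ on idele class groups — which requires knowing that $\chi$ itself descends. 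In the applications $\pi$ has trivial (or finite-order) central character, so this descent is automatic or can be arranged; in general one uses that $\chi$ is $\Gal(F/E)$-invariant (which follows because $\pi^\vee$ and $\pi \otimes \chi^\sigma$ are both dual to $\pi^\sigma \simeq \pi \otimes(\delta^i \circ \Art)$ up to a character, forcing $\chi^\sigma/\chi$ to be a power of $\delta_{F/E}$, and then one checks this power is trivial), hence $\chi$ lies in the image of base change for $\GL_1$.

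The core step is the duality computation. Automorphic induction commutes with taking contragredients in the sense that $\AI_{E/F}(\pi)^\vee \simeq \AI_{E/F}(\pi^\vee)$: this can be checked place by place via property (iii) of Theorem~\ref{thm:auto-induction} and the fact that the local Langlands correspondence $\rec_{F_v}$ intertwines contragredient with dual of Weil--Deligne representations, together with $\Ind$ commuting with $(-)^\vee$ for representations of the Weil group. Similarly, $\AI_{E/F}(\pi \otimes (\eta \circ \Nm_{F/E} \circ \Art_F \circ \det)) \simeq \AI_{E/F}(\pi) \otimes (\eta \circ \Art_E \circ \det)$ — again a local statement, since inducing a representation twisted by a character pulled back from $W_E$ is the same as twisting the induced representation (projection formula for $\Ind$). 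Combining these with $\pi^\vee \simeq \pi \otimes \chi = \pi \otimes (\eta \circ \Nm_{F/E} \circ \Art_F \circ \det)$ gives
$$\Pi^\vee \simeq \AI_{E/F}(\pi^\vee) \simeq \AI_{E/F}(\pi \otimes(\eta\circ\Nm_{F/E}\circ\Art_F\circ\det)) \simeq \Pi \otimes (\eta \circ \Art_E \circ \det),$$
so $\Pi$ is essentially self-dual with twisting character $\eta = \chi|_E$.

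It remains to check the archimedean sign condition, namely $\eta_w(-1) = (-1)^{mg}$ for every real place $w$ of $E$. Since $\eta$ satisfies $\eta \circ \Nm_{F/E} = \chi$ and $F/E$ is a degree-$g$ extension of totally real fields, each real place $w$ of $E$ has $g$ real places $v$ of $F$ above it, and the local norm $\Nm_{F_v/E_w}$ on $\R^\times$ is the identity; this yields $\eta_w = \chi_v$ at such a place... wait, more carefully, $\eta_w \circ \Nm_{F_v/E_w} = \chi_v$ with $\Nm_{F_v/E_w} = \mathrm{id}$ on $\R^\times$, so $\eta_w(-1) = \chi_v(-1) = (-1)^m$. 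Then $(\eta_w(-1))^{?}$ — one needs $(-1)^{mg} = (-1)^m$ when restricted... this holds precisely when $(-1)^{m(g-1)} = 1$, which is automatic if $g$ is odd (the case of interest, $g = 2g'+1$) or if $m$ is even; in the general essentially-self-dual setup one simply records $\eta_w(-1) = (-1)^m = (-1)^{mg}$ using that $g$ is odd. I expect the main obstacle to be precisely this bookkeeping at the archimedean places together with pinning down that $\chi$ genuinely descends to a Hecke character $\eta$ of $\A_E^\times$ — i.e. verifying the $\Gal(F/E)$-invariance of $\chi$ and ruling out the $\delta_{F/E}$-ambiguity; once descent is in hand, cuspidality of $\Pi$ is given by Theorem~\ref{thm:auto-induction} and the duality is a formal consequence of compatibility of $\AI_{E/F}$ with duals and twists.
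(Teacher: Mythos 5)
Your overall strategy (deduce the duality from compatibility of $\AI_{E/F}$ with contragredients and with twists by characters pulled back through $\Nm_{F/E}$) is reasonable, but the step on which everything hinges — that $\chi$ descends to a Hecke character of $\A_E^\times$ — is not established by your argument, and it is the genuine gap. You argue that $\chi$ is $\Gal(F/E)$-invariant by invoking ``$\pi^\sigma\simeq\pi\otimes(\delta^i_{F/E}\circ\Art)$'', but Theorem~\ref{thm:base-change}(iii) compares two automorphic representations of $\GL_n(\A_E)$ having the same base change; it says nothing about $\pi$ and ${}^\sigma\!\pi$ on $\GL_m(\A_F)$. Moreover, in the situation where $\AI_{E/F}(\pi)$ is cuspidal (trivial stabiliser of $\pi$, which the paper's proof assumes), one has ${}^\sigma\!\pi\not\simeq\pi$, and applying $\sigma$ to $\pi^\vee\simeq\pi\otimes\chi$ only relates ${}^\sigma\!\pi$ to ${}^\sigma\!\chi$; no relation between $\chi$ and ${}^\sigma\!\chi$ follows from essential self-duality alone, and $\chi$ need not lie in the image of base change for $\GL_1$ in general. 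In the paper's applications this is harmless because there $\chi$ is the inverse of the central character, which is trivial or assumed to factor through $\Nm_{F/E}$ (Theorem~\ref{thm:theta-lifts}), but your proposal offers no valid proof of the descent it needs. A second, smaller defect: your archimedean sign bookkeeping ends by ``using that $g$ is odd'', which is not a hypothesis of the lemma (the even case is needed, e.g.\ in Corollary~\ref{cor:jlr-lifts}); note that if $\chi_{|E}$ is read as the literal restriction of $\chi$ to $\A_E^\times$, the sign condition is automatic, since each real place of $E$ has $g$ real places of $F$ above it, giving $(\chi_{|E})_w(-1)=((-1)^m)^g=(-1)^{mg}$.

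For comparison, the paper argues globally rather than place by place: it writes $\Pi^\vee=\AI_{E/F}(\pi^\vee)$, computes $\BC_{F/E}(\Pi^\vee)=(\pi\otimes\chi)\boxplus{}^\sigma\!(\pi\otimes\chi)\boxplus\cdots$ via Theorems~\ref{thm:base-change} and~\ref{thm:auto-induction}, identifies this with $\BC_{F/E}(\Pi\otimes\chi_{|E})$, and concludes $\Pi^\vee\simeq\Pi\otimes\chi_{|E}$ by the uniqueness property of base change; the resulting ambiguity by powers of $\delta_{F/E}$ — which you flagged — is harmless there because $\Pi\simeq\Pi\otimes(\delta_{F/E}\circ\Art_E\circ\det)$ by Theorem~\ref{thm:auto-induction}(ii). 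Note, however, that the paper's manipulation also tacitly uses $\chi^{\sigma^i}=\chi$ (equivalently, that $\chi$ is a base change from $E$), i.e.\ it operates under the same implicit hypothesis you identified rather than proving it. So your instinct about where the difficulty sits is correct, but as written your justification of the descent is invalid, and the proposal does not close that gap.
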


\begin{proof} Let $\Pi = \AI_{E/F}(\pi)$, so that $\Pi^\vee = \AI_{E/F}(\pi^\vee)$. Then, by Theorem~\ref{thm:auto-induction}, $\Pi$ is cuspidal. So, we only need to show
that it is essentially self-dual. We recall that $\pi^\vee \simeq \pi \otimes \chi$, and for simplicity, we assume that the stabiliser 
of $\pi$ in $\Gal(F/E)$ is trivial. By Theorems~\ref{thm:base-change} and~\ref{thm:auto-induction}, it follows that
\begin{align*}
\BC_{F/E}(\Pi^\vee) &= \pi^\vee \boxplus (\pi^\vee)^\sigma \boxplus \cdots \boxplus (\pi^\vee)^{\sigma^{g-1}}\\
&= (\pi \otimes \chi) \boxplus (\pi \otimes \chi)^\sigma \boxplus \cdots \boxplus (\pi \otimes \chi)^{\sigma^{g-1}}\\
&= (\pi \boxplus \pi^\sigma \boxplus \cdots \boxplus \pi^{\sigma^{g-1}}) \otimes \chi_{|E}\circ\mathrm{N}_{F/E}\\
&= \BC_{F/E}(\Pi) \otimes \BC_{F/E}(\chi_{|E})\\
&= \BC_{F/E}(\Pi \otimes \chi_{|E}). 
\end{align*}
So, by uniqueness, we must have $\Pi^\vee = \Pi \otimes \chi_{|E}$. This implies that $(\Pi, \chi_{|E})$ is essentially self-dual. 
\end{proof}

\subsection{\bf Automorphic descent from $\GL_{2g}$ to $\GSpin_{2g+1}$}\label{sec:auto-descent}
Let $G/F$ be the algebraic group $\GSpin_{2g+1}$ or the split $\GSpin_{2g}$ or one of its non-split
quasi-split forms $\GSpin_{2g}^*$ determined by some quadratic extension of $F$. 
(For a detailed description of these groups, we refer to \cite{as06, as14}.) The dual group of
$G$ is $\widehat{G} = \GSp_{2g}$ or $\GSO_{2g}$, accordingly. 
There is a natural embedding $\widehat{G} \hookrightarrow \GL_{2g}$, which gives rise to a homomorphism 
$$\iota:\,\widehat{G}(\C) \hookrightarrow \GL_{2g}(\C).$$
Let $(\Pi',\chi)$ be an AESDC automorphic representation of $\GL_{2g}(\A_F)$. 
Let $\Pi$ be a cuspidal automorphic representation of $G(\A_F)$. Then, for each place $v$ of $F$, the
local component $\Pi_v$ of $\Pi$ is determined by its local Langlands parameter 
$$\phi_v: W_{F_v} \to \widehat{G}(\C).$$
We say that $(\Pi',\chi)$ is the {\it weak functorial transfer} of $\Pi$ to $\GL_{2g}$ if, for almost place $v$ of $F$,
the Langlands parameter of the local component $\Pi_v'$ is given by $\iota \circ \phi_v$. 
We say that it is the {\it strong functorial transfer} if this is true for all $v$. 

Let $(\Pi',\chi)$ be as above. Then, for every finite set of place $S$ (containing the archimedians), 
we have the equality of incomplete $L$-series
$$L^S(s, \Pi' \times (\Pi' \otimes \chi^{-1})) = L^S(s, \Sym^2(\Pi') \otimes \chi^{-1}) L^S(s, (\wedge^2\Pi') \otimes \chi^{-1}).$$
For $S$ sufficiently large, the left hand side has a simple pole at $s = 1$ (see \cite{js81a, js81b}). Hence, only one of the $L$-series
of the right hand side must have a pole. The following theorem characterises the image of automorphic descent to $\GSpin_{2g+1}$. 
(We refer to \cite{sha11} for the notion of global genericity, and to their Conjecture 2.10, which predicts that every cuspidal 
automorphic representations of $\GSpin_{2g+1}$ that is everywhere local generic is globally generic. We also refer 
to \cite{as06, as14, hs12} for a detailed study of the question of functorial transfer from $\GSpin_{2g+1}$ to $\GL_{2g}$.)

\begin{thm}\label{thm:descent} Let $F$ be a number field, and $(\Pi',\chi)$ an essentially self-dual cuspidal automorphic representation of $\GL_{2g}(\A_F)$. 
Then, the followings are equivalent:
\begin{enumerate}[(a)]
\item There exists a globally generic, cuspidal, automorphic representation $\Pi$ of $\GSpin_{2g+1}(\A_F)$, whose unique functorial transfer 
to $\GL_{2g}(\A_F)$ is $(\Pi',\chi)$;
\item The $L$-series $\displaystyle{L^S(s, \wedge^2\Pi'\otimes \chi^{-1})}$ has a pole at $s = 1$.
\end{enumerate}
\end{thm}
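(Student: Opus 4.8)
The plan is to deduce Theorem~\ref{thm:descent} from the Ginzburg--Rallis--Soudry descent machinery, adapted to $\GSpin$ groups by Asgari--Shahidi and Hundley--Sayag, rather than to reprove it from scratch. The key point is that the statement is really the combination of two functorial transfers: the descent map (from $\GL_{2g}$ to $\GSpin_{2g+1}$) producing~(a) from~(b), and the known transfer from $\GSpin_{2g+1}$ to $\GL_{2g}$ (via $\iota$) which together with the pole characterisation of the image produces~(b) from~(a).

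First I would prove the implication (b) $\Rightarrow$ (a). Starting from an essentially self-dual cuspidal $(\Pi',\chi)$ on $\GL_{2g}(\A_F)$ whose exterior square $L$-function $L^S(s,\wedge^2\Pi'\otimes\chi^{-1})$ has a pole at $s=1$, one first twists to normalise the similitude character: after replacing $\Pi'$ by a suitable twist $\Pi'\otimes|\det|^{t}$ and adjusting $\chi$ accordingly, one is reduced to the situation where the automorphic descent of Ginzburg--Rallis--Soudry applies on the nose. The descent construction, applied to the residue at $s=1$ of an Eisenstein series on the relevant even orthogonal (or $\GSpin$) tower built from $\Pi'$, produces a nonzero cuspidal automorphic representation $\Pi$ on $\SO_{2g+1}(\A_F)$; the pole of the exterior square $L$-function is precisely the input needed to guarantee non-vanishing of the descent. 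One then lifts $\Pi$ to $\GSpin_{2g+1}(\A_F)$ using the fact that $\GSpin_{2g+1}\to\SO_{2g+1}$ is a central extension by $\G_m$ and that the similitude character is pinned down by $\chi$; global genericity of $\Pi$ is built into the descent construction (the descent is by definition a Fourier coefficient against a generic character). Finally one checks, place by place at the unramified places, that the local Langlands parameter $\phi_v$ of $\Pi_v$ composed with $\iota:\widehat G(\C)\hookrightarrow\GL_{2g}(\C)$ recovers the Satake parameter of $\Pi'_v$; this is the compatibility of descent with the Satake isomorphism, and it identifies $(\Pi',\chi)$ as the weak, hence by strong multiplicity one the unique, functorial transfer of $\Pi$.

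Next I would prove (a) $\Rightarrow$ (b). Given a globally generic cuspidal $\Pi$ on $\GSpin_{2g+1}(\A_F)$ transferring to $(\Pi',\chi)$, I use the factorisation of incomplete $L$-functions recorded just before the theorem,
$$L^S(s,\Pi'\times(\Pi'\otimes\chi^{-1})) = L^S(s,\Sym^2(\Pi')\otimes\chi^{-1})\,L^S(s,(\wedge^2\Pi')\otimes\chi^{-1}),$$
together with the Jacquet--Shalika fact that the left side has a simple pole at $s=1$ (since $(\Pi')^\vee\simeq\Pi'\otimes\chi$). Thus exactly one of the two factors on the right has a pole at $s=1$, and the task is to rule out the symmetric square. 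Here I would invoke the description of the image of functorial transfer from $\GSpin_{2g+1}$: because $\widehat{\GSpin_{2g+1}}=\GSp_{2g}$ preserves a symplectic form, the parameter $\iota\circ\phi_v$ lands in $\GSp_{2g}(\C)\subset\GL_{2g}(\C)$, so $\wedge^2$ of the standard representation contains the similitude character as a summand; this forces $L^S(s,\wedge^2\Pi'\otimes\chi^{-1})$ to have a pole at $s=1$ (e.g. by comparing with $\zeta^S(s)$ via the Asgari--Shahidi analysis of these $L$-functions, or by Arthur's classification identifying $\Pi'$ as a transfer from the symplectic tower). Consequently the symmetric square factor is holomorphic at $s=1$ and (b) holds.

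The main obstacle is the first implication, specifically ensuring that the Ginzburg--Rallis--Soudry descent, which is classically stated for $\SO_{2g+1}$ and the orthogonal-to-$\GL$ tower, genuinely produces a representation of the \emph{similitude} group $\GSpin_{2g+1}$ with the correct central/similitude character matching $\chi$, and that non-vanishing and cuspidality of the descent are not lost in passing to the similitude setting. I would handle this by citing the $\GSpin$ version of the descent developed in \cite{as06,as14,hs12}, where the interplay between the pole of $L^S(s,\wedge^2\Pi'\otimes\chi^{-1})$, the structure of residual Eisenstein series, and the genericity of the Fourier coefficient is worked out; the normalisation of $\chi$ and the lift from $\SO_{2g+1}$ to $\GSpin_{2g+1}$ then reduce to bookkeeping with the central torus. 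The converse direction, by contrast, is essentially formal once one has the factorisation of $L$-functions and the known transfer $\GSpin_{2g+1}\to\GL_{2g}$ at hand.
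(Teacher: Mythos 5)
Your proposal is correct and ultimately rests on the same two citations as the paper: the paper's proof simply quotes Asgari--Shahidi \cite[Theorem 4.26]{as14} for (a) $\Rightarrow$ (b) and Hundley--Sayag \cite[Corollary 3.2.1]{hs12} for (b) $\Rightarrow$ (a), which is exactly where your sketch defers for the substantive steps (the pole of the twisted exterior square $L$-function for transfers from $\GSpin_{2g+1}$, and the non-vanishing, cuspidality and genericity of the $\GSpin$ descent). Your additional outline of how those references argue is reasonable, so this is essentially the same proof with more expository detail.
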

\begin{proof} Asgari-Shahidi \cite[Theorem 4.26]{as14} prove that (a) implies (b), while Hundley-Sayag \cite[Corollary 3.2.1]{hs12}
show the converse.
\end{proof}

\begin{rem}\rm It is important to note that \cite[Theorem 4.26]{as14} only gives that $\Pi'$ is a weak transfer of $\Pi$. However, \cite[Proposition 5.1]{as14}
shows that the transfer is in fact strong.
\end{rem}

\section{\bf Lifts of Hilbert automorphic representations}\label{sec:lifts-hmf}
In this section, we prove the existence of lifts of Hilbert automorphic forms. 

\subsection{Automorphic representations on $\GL_2$}
Let $k\ge 2$ and $w$ be two integers with the same parity. We let $\mathrm{D}_{k, w}$ be
the {\it discrete} series representation of $\GL_2(\R)$ defined in \cite[\S 0.2]{carayol86};
it is the essentially square integrable sub-representation of the unitary induction $\Ind(\mu, \nu)$, 
where $\mu,\nu$ are the characters of $\R^\times$ given by
$$\mu(t) = |t|^{\frac{k-1-w}{2}}\cdot \mathrm{sgn}(t),\,\,\, \nu(t) = |t|^{\frac{-k+1-w}{2}}.$$
The central character of $\mathrm{D}_{k,w}$ is the character $\mu\nu: t \mapsto t^{-w}$. 

Let $\uk = (k_\tau)_{\tau \in J_F} \in \Z_{+}^{J_F}$ be such that the $k_\tau$ have the same parity. Let 
$k_0 = \max\left\{k_\tau:\, \tau \in J_F\right\}$ and $w = k_0 - 2$. For each $\tau \in J_F$, 
define $m_\tau = \frac{k_0-k_\tau}{2}$ and $n_\tau = k_\tau - 2$. We are interested in automorphic representations 
$\pi =\bigotimes_v \pi_v$ of $\GL_2(\A_F)$ such that  $\pi_{\tau} \simeq \mathrm{D}_{k_\tau, w}$. Such 
representations correspond to holomorphic Hilbert modular forms of weight $\uk$. In the terminology of 
Section~\ref{sec:auto-reps-gln}, we see that $\pi$ has weight $\ua = (a_\tau)_{\tau \in J_F}$, where
$a_\tau = (a_{\tau, 1}, a_{\tau, 2}) = (k_\tau + m_\tau - 2, m_\tau)$. (Note that $a_{\tau, 1} + a_{\tau, 2} = w$.)

\subsection{Galois representations} 
Let $\pi$ be a cuspidal Hilbert automorphic representation of $\GL_2(\A_F)$ of weight $\uk$, and $L_\pi$ the field of rationality of $\pi$. Then, 
by work of Shimura \cite{shimura78}, $L_{\pi}$ is the coefficient field of the newform $f \in \pi$, i.e. $L_\pi = \Q(\{a_\gp(f):\,\gp \subset \CO_F\})$ 
where $a_\gp(f)$ is the eigenvalue of the Hecke $T_\gp$ at $\gp$ acting on $f$. In this case, 
Theorem~\ref{thm:galois-reps-for-gln} is a bit more precise. In the form below, it follows from work of Carayol \cite{carayol86}, 
Taylor \cite{tay89} and many other people.

\begin{thm}\label{thm:galois-reps-for-hmf} Let $\ell$ be a rational primes, and $\lambda \mid \ell$ a prime in $L_\pi$. 
Then, there exists a Galois representation 
$$r_\lambda(\pi): G_F \to \GL_2(\overline{L}_{\pi, \lambda}),$$ such that 
\begin{enumerate}[1.]
\item For each finite place $v$ such that $\pi_v$ is unramified, and $v \nmid \ell$, then the characteristic polynomial of $\Frob_v$ is given by
$X^2 - a_v X + q_v s_v$, where $q_v = \#k_v$, $k_v$ the residue field of $\CO_{F_v}$ at $v$, and $a_v$ and $s_v$ are the eigenvalues of the Hecke operators
$T_v$ and $S_v$ respectively. 

\item More generally, we have $\WD(r_\lambda(\pi)|_{G_{F_v}})^{\text{\rm F-ss}} \simeq \rec_{F_v}(\pi_v\otimes |\!\det\!|^{-1/2})$. 
\item If $v \mid \ell$, then $r_\lambda(\pi)|_{G_{F_v}}$ is de Rham with Hodge-Tate weights 
$$\mathrm{HT}_\tau(r_\lambda(\pi))= \{m_\tau, k_\tau + m_\tau - 1\},$$
for each embedding $\tau: F \hookrightarrow \overline{L}_{\pi,\lambda} \simeq \C$.
If $\pi_v$ is unramified then $r_\lambda(\pi)|_{G_{F_v}}$ is crystalline. 
\item If $c_v$ is a complex conjugation, then $\det(r_\lambda(\pi)(c_v)) = -1$. 
\end{enumerate}
\end{thm}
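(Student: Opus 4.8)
The plan is to obtain $r_\lambda(\pi)$ by specialising Theorem~\ref{thm:galois-reps-for-gln} to $n=2$ and then upgrading the conclusions using the classical geometric construction. First I would check that $(\pi,\omega_\pi^{-1})$, with $\omega_\pi$ the central character of $\pi$, is RAESDC: cuspidality is assumed; the weight $\ua$ with $a_\tau=(k_\tau+m_\tau-2,\,m_\tau)$ is algebraic because $k_\tau\ge 2$, and $\mathrm{D}_{k_\tau,w}$ has the infinitesimal character of $\Xi_{a_\tau}^\vee$, so $\pi$ is regular algebraic of weight $\ua$; essential self-duality $\pi^\vee\simeq\pi\otimes\omega_\pi^{-1}$ is automatic for $\GL_2$, and the sign condition on $\omega_\pi^{-1}$ at the archimedean places is read off from the central character $t\mapsto t^{-w}$ of $\mathrm{D}_{k,w}$. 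Theorem~\ref{thm:galois-reps-for-gln} then produces $r_\lambda(\pi)\colon G_F\to\GL_2(\overline{L}_{\pi,\lambda})$, and conditions (ii)--(iii) there give (2) here (note $(1-n)/2=-1/2$) and the de Rham part of~(3) (note $\mathrm{HT}_\tau=\{a_{\tau,2},\,a_{\tau,1}+1\}=\{m_\tau,\,k_\tau+m_\tau-1\}$).

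It then remains to deduce (1), the crystalline refinement in~(3), and~(4). I would read (1) off from (2) by unwinding unramified local Langlands: at $v\nmid\ell$ with $\pi_v$ unramified, $\rec_{F_v}(\pi_v\otimes|\det|^{-1/2})$ is the unramified representation whose arithmetic Frobenius has characteristic polynomial $X^2-a_vX+q_vs_v$, where $a_v,s_v$ are the $T_v$- and $S_v$-eigenvalues of the newform $f\in\pi$. For the crystalline claim: once $\pi_v$ is unramified at $v\mid\ell$, the Weil--Deligne representation appearing in (2) is unramified with $N=0$, so $r_\lambda(\pi)|_{G_{F_v}}$, being de Rham, is in fact crystalline; this good-reduction refinement of local-global compatibility at $\ell$ is due to Saito. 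For (4), since $c_v^2=1$ we have $\det r_\lambda(\pi)(c_v)\in\{\pm1\}$, and the equality with $-1$ --- total oddness of Galois representations of holomorphic Hilbert modular forms --- I would derive from archimedean local-global compatibility (the Langlands parameter of $\mathrm{D}_{k_\tau,w}$ is a two-dimensional induced representation of $W_\R$ that sends complex conjugation to an element of determinant $-1$ in the arithmetic normalisation), or equivalently from the shape of the Hodge--Tate weights by the sign argument of Bella\"{\i}che--Chenevier and Taylor.

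For a treatment independent of the full $\GL_n$ package --- in the spirit of the citations to Carayol and Taylor --- I would instead build $r_\lambda(\pi)$ geometrically. When $[F:\Q]$ is odd, take a quaternion algebra over $F$ split at exactly one archimedean place and unramified at all finite places, transfer $\pi$ to it by Jacquet--Langlands, and cut $r_\lambda(\pi)$ out of the $\lambda$-adic \'etale $H^1$ of the associated Shimura curve with coefficients in the local system of weight $\ua$; the Eichler--Shimura congruence relation then gives (1), Carayol gives local-global compatibility away from $\ell$, Saito gives it at $\ell$ including the crystalline refinement, and (4) again comes from the archimedean parameter. When $[F:\Q]$ is even no such Shimura curve realises a general $\pi$, and this is where I expect the main difficulty to lie: one must fall back on Taylor's method, using solvable base change and level raising to produce, for a density-one set of primes $\lambda$, a congruent form that is (say) Steinberg at an auxiliary place and hence realisable on a Shimura curve over a quadratic extension, and then reconstruct the characteristic-zero representation over $F$ from the resulting compatible family of residual representations by means of pseudo-representations. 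The second delicate point is the full local-global compatibility at $\ell=p$ behind (3), which rests on a substantial amount of $p$-adic Hodge theory.
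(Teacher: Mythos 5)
The paper offers no proof of Theorem~\ref{thm:galois-reps-for-hmf}: it is stated as a known result, ``it follows from work of Carayol \cite{carayol86}, Taylor \cite{tay89} and many other people,'' i.e.\ precisely the sources and techniques you invoke. Your proposal is correct and is essentially an expanded account of that same route --- specialising Theorem~\ref{thm:galois-reps-for-gln} to $n=2$ and supplementing it with the classical constructions (Jacquet--Langlands plus Shimura-curve cohomology in the odd-degree or discrete-series case, Taylor's congruence/pseudo-representation argument in the remaining even-degree case, Saito's local-global compatibility at $\ell$ for the crystalline refinement, and the determinant/archimedean-parameter computation for total oddness) --- so there is nothing substantive to add.
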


\subsection{\bf Local-global compatibility at archimedian places}\label{subsec:lg-comp}
Conditions (2) and (3) express local-global compatibility conditions at finite places. This also extends to 
archimedian places. Let $\zeta_{k_\tau, w}$ be the character
\begin{align*}
\zeta_{k_\tau, w}: \C^\times &\to \C^\times\\
 z &\mapsto (z\bar{z})^{\frac{-w-k_\tau+1}{2}}z^{k_\tau-1}.
 \end{align*}
This character determines the $2$-dimensional Weil representaion
$\sigma_{k_\tau, w} = \Ind_{W_{\C}}^{W_\R}\zeta_{k_\tau, w}$. 
The local-global compatibility condition at infinite places, asserts that $\sigma_{k_\tau, w}$ corresponds to $\mathrm{D}_{k_\tau, w}$.

\subsection{Lifts of Hilbert automorphic representations}
We are now ready to prove our main result on the existence of lifts. The method we used was suggested by F.~Shahidi.
It follows somewhat \cite{rs07}, with the advantage that we now have the results in \cite{as06, as14} and \cite{hs12} at our disposal in the form of
Theorem~\ref{thm:descent}. 

\begin{lem}\label{lem:cusp-decomp} Let $\pi$ be a cuspidal automorphic representation of $\GL_2(\A_F)$,
with central character $\omega$, which is not a base change 
from any proper sub-extension $F'/E$ of $F$. Let $\Pi = \AI_{E/F}(\pi)$, and $t = \lfloor\frac{g-1}{2}\rfloor$. 
When $g$ is even, let $E_0/E$ be the sub-extension of degree $g/2$ of $F$, and $\Pi_0$ a cuspidal automorphic 
representation over $\GL_4(\A_{E_0})$ such that $\BC_{F/E_0}(\Pi_0) = \pi \times \pi^{\sigma^{g/2}}$.
Then we have the following isobaric decompositions:
\begin{align*}
\wedge^2 \Pi &= \AI_{E/F}(\omega)|\cdot|^{k_0-1}\boxplus \bigboxplus_{i=1}^t \AI_{E/F}(\pi \times \pi^{\sigma^i}) \boxplus \left\{
\begin{array}{ll}
0&\text{if}\,\,g\,\,\text{is odd};\\ \\
\AI_{E/E_0}(\Pi_0)&\text{otherwise};
\end{array}\right.\\ \\
\Sym^2 \Pi &= \AI_{E/F}(\Sym^2(\pi))\boxplus \bigboxplus_{i=1}^t \AI_{E/F}(\pi \times \pi^{\sigma^i}) \boxplus \left\{
\begin{array}{ll}
0&\text{if}\,\,g\,\,\text{is odd};\\ \\
\AI_{E/E_0}(\Pi_0)&\text{otherwise}. 
\end{array}\right.
\end{align*} 

\end{lem}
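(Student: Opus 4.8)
The plan is to compute the $\GL_{2g}$-representation $\wedge^2\Pi$ (and similarly $\Sym^2\Pi$) by passing through base change, where both exterior and symmetric squares become transparent, and then descending back to $E$ via automorphic induction. First I would record that, since $\Pi=\AI_{E/F}(\pi)$ satisfies $\BC_{F/E}(\Pi)=\pi\boxplus\pi^\sigma\boxplus\cdots\boxplus\pi^{\sigma^{g-1}}$ by Theorem~\ref{thm:auto-induction}(i), and base change commutes with the formation of $\wedge^2$ and $\Sym^2$ (both are functorial operations on the dual-group side, compatible with restriction of Langlands parameters), one gets
\[
\BC_{F/E}\bigl(\wedge^2\Pi\bigr)=\wedge^2\!\Bigl(\bigboxplus_{i=0}^{g-1}\pi^{\sigma^i}\Bigr)
=\bigboxplus_{i=0}^{g-1}\wedge^2(\pi^{\sigma^i})\ \boxplus\ \bigboxplus_{0\le i<j\le g-1}\pi^{\sigma^i}\times\pi^{\sigma^j}.
\]
Now $\wedge^2(\pi^{\sigma^i})=\omega^{\sigma^i}|\cdot|^{k_0-1}$ is a Hecke character (using that $\pi$ has weight $\uk$ and $w=k_0-2$, matching the normalisation in Theorem~\ref{thm:galois-reps-for-hmf}), and $\pi^{\sigma^i}\times\pi^{\sigma^j}$ depends only on $j-i$ up to $\sigma$-conjugacy, so the cross terms group into $\Gal(F/E)$-orbits indexed by $i=1,\dots,t$ with $t=\lfloor(g-1)/2\rfloor$, each orbit of size $g$ — except when $g$ is even, where the orbit of $\pi\times\pi^{\sigma^{g/2}}$ has only size $g/2$ and is fixed by $\sigma^{g/2}$.

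Next I would invoke the characterising property of automorphic induction (Theorem~\ref{thm:auto-induction}(i)) in reverse: an isobaric sum over $E$ whose base change to $F$ is a full $\Gal(F/E)$-orbit sum must be the corresponding $\AI_{E/F}$. Concretely, $\bigboxplus_{i=0}^{g-1}\omega^{\sigma^i}|\cdot|^{k_0-1}=\BC_{F/E}(\AI_{E/F}(\omega))\cdot|\cdot|^{k_0-1}$, giving the term $\AI_{E/F}(\omega)|\cdot|^{k_0-1}$; and for each $1\le i\le t$ the size-$g$ orbit of $\pi\times\pi^{\sigma^i}$ descends to $\AI_{E/F}(\pi\times\pi^{\sigma^i})$. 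In the even case, the remaining orbit consists of $\pi\times\pi^{\sigma^{g/2}}$ and its $\sigma$-conjugates, which is $\BC_{F/E}$ of something induced only from $E_0$ (the degree-$g/2$ subextension): by hypothesis $\Pi_0/\GL_4(\A_{E_0})$ satisfies $\BC_{F/E_0}(\Pi_0)=\pi\times\pi^{\sigma^{g/2}}$, so transitivity of base change gives $\BC_{F/E}(\AI_{E/E_0}(\Pi_0))$ equal to the $\sigma$-orbit of $\pi\times\pi^{\sigma^{g/2}}$, yielding the last summand. Assembling these and using uniqueness of the isobaric decomposition (strong multiplicity one) gives the formula for $\wedge^2\Pi$; the computation for $\Sym^2\Pi$ is identical except that $\Sym^2(\pi^{\sigma^i})$ is the $3$-dimensional $\mathrm{Sym}^2$ rather than the character $\wedge^2$, producing $\AI_{E/F}(\Sym^2(\pi))$, and the cross terms $\pi^{\sigma^i}\times\pi^{\sigma^j}$ are unchanged.

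I expect the main obstacle to be the descent step — justifying that an isobaric sum over $E$ is genuinely an automorphic induction rather than merely having the right base change. One must check that each candidate constituent (e.g. $\AI_{E/F}(\pi\times\pi^{\sigma^i})$, or $\AI_{E/E_0}(\Pi_0)$) actually exists as an \emph{isobaric automorphic} representation, which requires verifying the non-self-conjugacy hypothesis of Theorem~\ref{thm:auto-induction} for each piece: e.g. $\pi\times\pi^{\sigma^i}$ should not be a base change from a proper subextension (generically true since $\pi$ itself is not, but stabilisers can be subtle when $i$ shares a common factor with $g$), and in the even case one must confirm $\Pi_0$ is not a base change from below $E_0$ and that its $\Gal(E_0/E)$-orbit has trivial stabiliser. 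A careful bookkeeping of which $\Gal(F/E)$-orbits of pairs $(i,j)$ collapse — together with the case analysis of $\gcd(i,g)$ — is where the real work lies; once the orbit structure is pinned down, matching it against iterated inductions via transitivity of base change and Theorem~\ref{thm:auto-induction}(i) is formal, as is the final appeal to strong multiplicity one to conclude equality of the two isobaric sums.
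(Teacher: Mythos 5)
Your proposal is correct and follows essentially the same route as the paper: pass to $F$, split the square of the isobaric sum into the diagonal terms ($\omega^{\sigma^i}|\cdot|^{k_0-1}$, resp. $\Sym^2(\pi^{\sigma^i})$) and the cross terms $\pi^{\sigma^i}\times\pi^{\sigma^j}$, group the latter into $\sigma$-orbits indexed by $j-i$ (with the half-size orbit at $j-i=g/2$ descending through $E_0$ when $g$ is even), and descend each orbit via Theorem~\ref{thm:auto-induction} and uniqueness. The only real difference is bookkeeping: the paper carries out this computation on the attached $\lambda$-adic Galois representations, restricting $\wedge^2 r_\lambda(\Pi)$ and $\Sym^2 r_\lambda(\Pi)$ to $G_F$ (where the squares unconditionally make sense, sidestepping any appeal to an exterior-square transfer for $\GL_{2g}$), and it quotes Jacquet--Shalika for the cuspidality of the cross terms $\pi^{\sigma^i}\times\pi^{\sigma^j}$ needed to apply Theorem~\ref{thm:auto-induction} --- the hypothesis-checking you flag as the main obstacle is dispatched in exactly this brief way in the paper as well.
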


\begin{proof} Since $F$ is cyclic, and $\pi$ is not a base change from any proper sub-extension $F'/E$ of $F$, 
Theorem~\ref{thm:auto-induction} implies that $\Pi = \AI_{E/F}(\pi)$ is a cuspidal automorphic representation of $\GL_{2g}(\A_E)$.
Recall that $\pi^\vee = \pi \otimes \omega^{-1}$, and $\det(r_{\lambda}(\pi)) = r_{\lambda}(\omega) \epsilon^{k_0-1}$. 
So, by Lemma~\ref{lem:self-dual}, it follows $\Pi^\vee= \Pi \otimes \omega_{|E}^{-1}$.
By Theorem~\ref{thm:auto-induction}, we have 
\begin{align*}
\left(\wedge^2 r_\lambda(\Pi)\right)\Big|_{G_F} &=  \bigwedge^2\left(r_\lambda(\pi) \oplus \cdots \oplus r_\lambda(\pi^{\sigma^{g-1}})\right)\\
&=\left(r_{\lambda}(\omega) \oplus r_{\lambda}(\omega^{\sigma}) \oplus \cdots \oplus r_{\lambda}(\omega^{\sigma^{g-1}})\right)\otimes\epsilon^{k_0-1}\\
&\qquad {} \oplus \bigoplus_{0\le i<j\le g-1} r_\lambda(\pi^{\sigma^{i}}) \otimes r_\lambda(\pi^{\sigma^{j}})\\
&=\left(\Ind_E^F(r_{\lambda}(\omega))\big|_{G_F}\right)\otimes\epsilon^{k_0-1} \oplus \bigoplus_{0\le i<j\le g-1} r_\lambda(\pi^{\sigma^{i}} \times \pi^{\sigma^{j}}).
\end{align*}
A similar calculation shows that 
\begin{align*}
\left(\Sym^2 r_\lambda(\Pi)\right)\Big|_{G_F} &=\left(\Ind_E^F(\Sym^2(r_{\lambda}(\pi)))\big|_{G_F}\right) \oplus \bigoplus_{0\le i<j\le g-1} r_\lambda(\pi^{\sigma^{i}} \times \pi^{\sigma^{j}}).
\end{align*}
When $g$ is odd, then we can rearrange the last sum as
\begin{align*}
\bigoplus_{0\le i<j\le g-1} r_\lambda(\pi^{\sigma^{i}} \times \pi^{\sigma^{j}}) &= \bigoplus_{i = 1}^t \left(r_\lambda(\pi \times \pi^{\sigma^{i}}) \oplus \cdots 
\oplus r_\lambda(\pi^{\sigma^i} \times \pi^{\sigma^{i+g-1}})\right)\\
&=\bigoplus_{i=1}^t \left(\Ind_E^F(r_\lambda(\pi\times \pi^{\sigma^i}))\big|_{G_F}\right).
\end{align*}
When $g$ is even, then $\pi \times \pi^{\sigma^{g/2}}$ is a base change from the subfield $E_0$ of $F$ of degree $g/2$. (It is cuspidal, see below.) 
Let $\Pi_0$ be such that $\BC_{F/E_0}(\Pi_0) = \pi \times \pi^{\sigma^{g/2}}$. Then, $\Pi_0$ is cuspidal by Theorem~\ref{thm:base-change},
and it is not hard to see that it cannot be a base change from any proper subfield $E'/E$ of $E_0$. We have
\begin{align*}
\bigoplus_{0\le i<j\le g-1} r_\lambda(\pi^{\sigma^{i}} \times \pi^{\sigma^{j}}) &= \bigoplus_{i = 1}^t \left(r_\lambda(\pi \times \pi^{\sigma^{i}}) \oplus \cdots 
\oplus r_\lambda(\pi^{\sigma^i} \times \pi^{\sigma^{i+g-1}})\right)\\
&\qquad \oplus \left(r_{\lambda}(\Pi_0)\big|_{G_F} \oplus \cdots \oplus r_{\lambda}(\Pi_0^{\sigma^{g/2}})\big|_{G_F}\right)\\
&=\bigoplus_{i=1}^t \left(\Ind_E^F(r_\lambda(\pi\times \pi^{\sigma^i}))\big|_{G_F} \right) \oplus \Ind_{E}^{E_0}(r_{\lambda}(\Pi_0))\big|_{G_F}.
\end{align*}
By \cite{js81a, js81b}, $\pi^{\sigma^{i}} \times \pi^{\sigma^{j}}$ is cuspidal, for $0\le i< j \le g-1$.
So, we conclude the lemma by again applying Theorem~\ref{thm:auto-induction}. 
\end{proof}

\begin{thm}\label{thm:theta-lifts} 
Let $F/E$ be a cyclic extension of totally real fields of degree $g$. Let $\pi$ be a cuspidal Hilbert automorphic representation, with central character $\omega$, 
which is not a base change from any sub-extension $F'/E$ of $F$. Then, $\Pi'= \AI_{E/F}(\pi)$ descends to a globally generic, cuspidal, automorphic representation 
$\Pi$ of $\GSpin_{2g+1}(\A_E)$ if and only if $\omega$ factors through the norm map $\mathrm{N}_{F/E}: F \to E$.
\end{thm}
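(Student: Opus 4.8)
The plan is to apply Theorem~\ref{thm:descent} to the pair $(\Pi', \chi)$ where $\Pi' = \AI_{E/F}(\pi)$ and $\chi = \omega_{|E}$, so that the existence of the desired $\GSpin_{2g+1}$-descent becomes equivalent to a pole statement for $L^S(s, \wedge^2\Pi' \otimes \chi^{-1})$ at $s=1$. First I would record that $\Pi'$ is cuspidal (Theorem~\ref{thm:auto-induction}) and essentially self-dual with $(\Pi')^\vee \simeq \Pi' \otimes \omega_{|E}^{-1}$ (Lemma~\ref{lem:self-dual}), so the hypotheses of Theorem~\ref{thm:descent} are met. By the isobaric decomposition of $\wedge^2\Pi'$ in Lemma~\ref{lem:cusp-decomp}, we have
\begin{equation*}
\wedge^2 \Pi' = \AI_{E/F}(\omega)|\cdot|^{k_0-1} \boxplus \bigboxplus_{i=1}^t \AI_{E/F}(\pi \times \pi^{\sigma^i}) \boxplus \left(\AI_{E/E_0}(\Pi_0)\ \text{or}\ 0\right).
\end{equation*}
Tensoring by $\omega_{|E}^{-1}$ and taking incomplete $L$-functions, the $L$-series $L^S(s, \wedge^2\Pi' \otimes \omega_{|E}^{-1})$ factors as a product of the $L$-functions of these isobaric constituents twisted by $\omega_{|E}^{-1}$.

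The key step is then to decide which, if any, of these factors has a pole at $s = 1$. The summands $\AI_{E/F}(\pi \times \pi^{\sigma^i}) \otimes \omega_{|E}^{-1}$ for $1 \le i \le t$ are (inductions of) cuspidal representations on $\GL_{4g}$ by \cite{js81a, js81b}, hence their $L$-functions are entire; similarly the $\AI_{E/E_0}(\Pi_0) \otimes \omega_{|E}^{-1}$ term, when $g$ is even, is an induction of a cuspidal representation on $\GL_4$ and contributes no pole. So the pole, if it exists, can only come from the abelian factor $\AI_{E/F}(\omega)|\cdot|^{k_0-1} \otimes \omega_{|E}^{-1}$. Its $L$-function is $L^S(s + k_0 - 1, \AI_{E/F}(\omega) \otimes \omega_{|E}^{-1})$, and since automorphic induction is compatible with the formation of $L$-functions, this equals $L^S(s + k_0 - 1, \omega \cdot (\omega_{|E} \circ \mathrm{N}_{F/E})^{-1})$, a Hecke $L$-function over $F$. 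A Hecke $L$-function has a pole at a point of the critical strip precisely when the Hecke character is (a shift of) trivial; here one needs $\omega = \omega_{|E}\circ \mathrm{N}_{F/E}$ up to the normalizing $|\cdot|^{k_0-1}$ shift — i.e.\ (after accounting for the fact that $\omega$ has the form $\omega_0 |\cdot|^{?}$ dictated by the weight-$2$/unitary normalization) exactly the condition that $\omega$ factor through $\mathrm{N}_{F/E}$. Indeed if $\omega = \omega_1 \circ \mathrm{N}_{F/E}$ then $\omega_{|E} = \omega_1^g$ and $\omega \cdot (\omega_{|E}\circ \mathrm{N}_{F/E})^{-1} = \omega_1 \circ \mathrm{N}_{F/E} \cdot (\omega_1^g \circ \mathrm{N}_{F/E})^{-1}$, which one checks reduces to a trivial-up-to-shift character exactly in the right range; conversely a pole forces the character to be trivial up to shift, which by the structure of $\mathrm{N}_{F/E}$ on idele class groups forces $\omega$ to factor through the norm.

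I would carry this out in the order: (1) verify the hypotheses of Theorem~\ref{thm:descent}; (2) factor $L^S(s, \wedge^2\Pi' \otimes \omega_{|E}^{-1})$ using Lemma~\ref{lem:cusp-decomp} and compatibility of automorphic induction with $L$-functions; (3) show all non-abelian constituents give entire $L$-functions (cuspidality via \cite{js81a, js81b}); (4) analyze the abelian Hecke $L$-function and translate its pole into the norm-factorization condition on $\omega$; (5) invoke Theorem~\ref{thm:descent} to conclude. The main obstacle I anticipate is bookkeeping in step (4): one must be careful about the central-character and unitary normalizations (the $|\cdot|^{k_0-1}$ twist, and the precise relationship between the weight-$2$ normalization of $\pi$ and the unitary normalization used in the $\wedge^2$ lift), and must correctly compute $\omega_{|E}$ — the restriction of the central character of $\AI_{E/F}(\pi)$ to $E$ — in terms of $\omega$ and $\mathrm{N}_{F/E}$, in order to see that the pole condition is \emph{precisely} $\omega$ factoring through the norm, with no extraneous congruence or shift obstruction.
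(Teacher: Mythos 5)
Your overall strategy --- reduce to the pole criterion of Theorem~\ref{thm:descent} and read the pole of $L^S(s,\wedge^2\Pi'\otimes\omega_{|E}^{-1})$ off the isobaric decomposition of Lemma~\ref{lem:cusp-decomp} --- is the same as the paper's, and your argument for the ``only if'' direction (descent $\Rightarrow$ pole at $s=1$ $\Rightarrow$ the pole must come from the abelian constituent, since the cuspidal constituents have entire $L$-functions $\Rightarrow$ $\omega$ factors through the norm) matches the paper's first step. The gap is in the converse. Entirety of the factors $L^S(s,\Pi_i\otimes\omega_{|E}^{-1})$ only says they contribute no poles; it does not say they are non-zero at $s=1$. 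So from ``$\omega$ factors through the norm, hence the abelian factor $L^S(s,\AI_{E/F}(\omega)|\cdot|^{k_0-1}\otimes\omega_{|E}^{-1})$ has a pole (in fact of order $g$) at $s=1$'' you cannot yet conclude that $L^S(s,\wedge^2\Pi'\otimes\omega_{|E}^{-1})$ has a pole: the pole could a priori be cancelled by zeros of the cuspidal factors at $s=1$. Your steps (3)--(4) treat ``pole of the product $\Leftrightarrow$ pole of the abelian factor'' as automatic, and the only obstacle you flag is normalization bookkeeping; the cancellation issue is precisely what the second half of the paper's proof is devoted to.

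The paper excludes the cancellation by contradiction: if $L^S(s,\wedge^2\Pi'\otimes\omega_{|E}^{-1})$ had no pole at $s=1$, then by Shahidi's non-vanishing theorem \cite{sha97} it is non-zero there, so the product of the cuspidal factors would have to vanish to order exactly $g$ at $s=1$; since the Rankin--Selberg $L$-function $L^S(s,\Pi'\times(\Pi'\otimes\omega_{|E}^{-1}))$ has a simple pole at $s=1$ \cite{js81a,js81b}, the factor $L^S(s,\Sym^2\Pi'\otimes\omega_{|E}^{-1})$ would then carry that simple pole, forcing $L^S(s,\AI_{E/F}(\Sym^2\pi)\otimes\omega_{|E}^{-1})$ to have a pole of order $g+1$; but Takeda's theorem \cite{tak14} (twisted symmetric square $L$-functions have at most simple poles) bounds its pole order by $g$, a contradiction. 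If you prefer to keep your more direct route, you would have to add a genuine non-vanishing input at $s=1$ for the standard $L$-functions of the cuspidal constituents $\Pi_i\otimes\omega_{|E}^{-1}$ (Jacquet--Shalika non-vanishing on $\Re(s)\ge 1$, after checking the unitary normalization); either way, some non-vanishing theorem beyond entirety is indispensable, and your proposal as written omits it.
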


\begin{proof} 
Let $r = \lfloor \frac{g-1}{2}\rfloor$ or $g/2$ according as to $g$ is odd or even. Then, by Lemma~\ref{lem:cusp-decomp}, there exist cuspidal 
automorphic representations $\Pi_1,\ldots, \Pi_r$ on $\GL_{4g}(\A_E)$ such that
\begin{align*}
\wedge^2 \Pi' &= \AI_{E/F}(\omega)|\cdot|^{k_0-1} \boxplus \Pi_1 \boxplus \cdots \boxplus \Pi_r;\\
\Sym^2(\Pi') &= \AI_{E/F}(\Sym^2(\pi)) \boxplus \Pi_1 \boxplus \cdots \boxplus \Pi_r.
\end{align*}
The product $L^S(s, \Pi_1\otimes \omega_{|E}^{-1}) \cdots L^S(s, \Pi_r\otimes \omega_{|E}^{-1})$ is an entire function. 
Therefore, if $L^S(s, \Sym^2(\Pi')\otimes \omega_{|E}^{-1})$ has a pole at $s = 1$ then  $L^S(s, \AI_{E/F}(\Sym^2(\pi))\otimes  \omega_{|E}^{-1})$ has a pole at $s = 1$.  
Similarly, if $L^S(s, (\wedge^2 \Pi')\otimes \omega_{|E}^{-1})$ has a pole at $s = 1$, then so does $L^S(s, \AI_{E/F}(\omega)|\cdot|^{k_0-1}\otimes  \omega_{|E}^{-1})$. 
The latter is possible if and only if $\omega$ factors through the norm map $\mathrm{N}_{F/E}: F \to E$, i.e. only if $\omega$ is a base change from $E$.

Conversely, assume that $\omega$ factors through the norm map $\mathrm{N}_{F/E}$, and also that $L^S(s, (\wedge^2 \Pi')\otimes \omega_{|E}^{-1})$ does not have a pole at $s = 1$. 
 By \cite[Theorem 1.1]{sha97}, the $L$-series $L^S(s, (\wedge^2\Pi')\otimes \omega_{|E}^{-1})$ and $L^S(s, \Sym^2(\Pi')\otimes \omega_{|E}^{-1})$ 
are always non-zero at $s = 1$. Hence $L^S(s, \Pi_1\otimes \omega_{|E}^{-1}) \cdots L^S(s, \Pi_r\otimes \omega_{|E}^{-1})$ has a zero of order $g$
at $s = 1$ since $L^S(s, \AI_{E/F}(\omega)|\cdot|^{k_0-1}\otimes  \omega_{|E}^{-1})$ has a pole of order $g$ at $s = 1$.
This means that $L^S(s, \Sym^2(\Pi')\otimes \omega_{|E}^{-1})$ must have a simple pole at $s = 1$, or equivalently, that 
$L^S(s, \AI_{E/F}(\Sym^2(\pi))\otimes  \omega_{|E}^{-1})$ must have a pole of order $g + 1$ at $s = 1$. 

Since $\omega$ factors through the norm map, we have 
$$\AI_{E/F}(\Sym^2(\pi))\otimes  \omega_{|E}^{-1} = \AI_{E/F}(\Sym^2(\pi)\otimes\omega^{-1}).$$
By Theorem~\ref{thm:auto-induction}, if $\Sym^2(\pi)\otimes \omega^{-1}$ is cuspidal, then so is $\AI_{E/F}(\Sym^2(\pi))\otimes  \omega_{|E}^{-1}$. 
Therefore, if $L^S(s, \AI_{E/F}(\Sym^2(\pi))\otimes  \omega_{|E}^{-1})$
has a pole at $s = 1$ then the same is true for $L^S(s, \Sym^2(\pi)\otimes \omega^{-1})$. 
However, by \cite[Theorem 5.1]{tak14}, $L^S(s, \Sym^2(\Pi')\otimes \omega_{|E}^{-1})$ and $L^S(s, \Sym^2(\pi)\otimes \omega^{-1})$
can have at most simple poles at $s = 1$. Therefore, the order of the pole of $L^S(s, \AI_{E/F}(\Sym^2(\pi))\otimes  \omega_{|E}^{-1})$
at $s = 1$ is at most $g$. So, $L^S(s, \Sym^2(\Pi')\otimes \omega_{|E}^{-1})$ cannot have a pole at $s = 1$, which is a contradiction. 
So, we conclude that $\Pi'$ satisfies the conditions of Theorem~\ref{thm:descent} if and only if $\omega$ is a base change from $E$. 
Therefore, $\Pi'$ descends to a globally generic, cuspidal, automorphic representation $\Pi$ of $\GSpin_{2g+1}(\A_E)$ if and  only if 
$\omega$ factors through the norm map $\mathrm{N}_{F/E}: F \to E$. 
\end{proof}

\begin{rem}\rm Let $F/E$ be a cyclic extension of totally real fields of degree $g$. Let $\pi$ be a cuspidal Hilbert automorphic representation, 
with central character $\omega$, which is not a base change from any sub-extension $F'/E$ of $F$. Assume that $\omega$ is {\it not} a base 
change from $E$. Then, Theorem~\ref{thm:theta-lifts} also shows that $\pi$ lifts to an automorphic representation $\Pi$ on the even split
$\GSpin_{2g}$ or one of its quasi-split non-split form $\GSpin_{2g}^*$. The central character of $\Pi'$ is $\omega_{|E}$, so  we obtain a quadratic
character $\mu: E^\times\backslash\A_E^\times \to \{\pm 1\}$ given by $$\mu = \omega_{\Pi'}\omega_{|E}^{-g} = \omega_{|E}^{1-g}.$$
If $\mu$ is trivial, then $\pi$ lifts to the split $\GSpin_{2g}$. Otherwise, it lifts to the quasi-split non-split $\GSpin_{2g}^*$ determined by $\mu$.
\end{rem}

\begin{cor}\label{cor:jlr-lifts} Let $F/E$ be a quadratic extension of totally real fields, and $\pi$ a cuspidal Hilbert automorphic representation
of $\GL_2(\A_F)$, which is not a base change from $E$. Let $\omega$ be the central character of $\pi$, and assume that it is of the form 
$\omega = \mathrm{N}_{F/E}\circ \omega'$ for some character 
$\omega':E^\times \backslash \A_E^\times \to \C^\times$. Then, there exists a holomorphic Hilbert-Siegel automorphic representation
$\Pi^{\rm h}$ of $\GSp_4(\A_E)$ which lifts $\pi$ in the sense that its functorial transfer to $\GL_4(\A_E)$ is $\Pi' = \AI_{E/F}(\pi)$. 
\end{cor}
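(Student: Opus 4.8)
\textbf{Proof proposal for Corollary~\ref{cor:jlr-lifts}.}
The plan is to deduce this as the special case $g = 2$ of the circle of ideas leading up to Theorem~\ref{thm:theta-lifts}, supplemented by the analysis of the archimedian component. First I would apply Theorem~\ref{thm:theta-lifts} directly: since $F/E$ is cyclic of degree $2$, $\pi$ is not a base change from $E$, and the central character $\omega$ factors through $\mathrm{N}_{F/E}$ by hypothesis, the theorem gives a globally generic cuspidal automorphic representation $\Pi$ of $\GSpin_5(\A_E)$ whose strong functorial transfer to $\GL_4(\A_E)$ is $\Pi' = \AI_{E/F}(\pi)$. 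Now $\GSpin_5 \cong \GSp_4$, so $\Pi$ already lives on $\GSp_4(\A_E)$; what remains is to show that $\Pi$ (or an appropriate twist/member of its packet) admits a \emph{holomorphic} archimedian component, i.e.\ that $\Pi_v$ for each $v\mid\infty$ is (a limit of) holomorphic discrete series rather than a generic/large discrete series.

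The key step is therefore purely local at the archimedian places. At each real place $v$ of $E$, the place $v$ either splits or is inert in $F$. I would compute the local parameter of $\Pi'_v = \AI_{E_v/F_w}(\pi_w)$ using Theorem~\ref{thm:auto-induction}(iii) together with the explicit description of $\pi_w$ as $\mathrm{D}_{k_w, w}$ from Section~\ref{sec:lifts-hmf} and the archimedian local-global compatibility of Section~\ref{subsec:lg-comp}, giving the $4$-dimensional Weil--Deligne parameter $W_{E_v}\to\GL_4(\C)$ explicitly as an induction of the characters $\zeta_{k_\tau, w}$. One then checks that this parameter, viewed through $\GSp_4(\C)\hookrightarrow\GL_4(\C)$, is the Langlands parameter of a discrete series $L$-packet of $\GSp_4(\R)$ that contains a holomorphic (or holomorphic limit of) discrete series member. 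This is a standard fact about $L$-packets for $\GSp_4(\R)$: a discrete series packet always contains both generic and holomorphic members. Replacing the generic member $\Pi_v$ supplied by Theorem~\ref{thm:theta-lifts} by the holomorphic member at every $v\mid\infty$, and keeping the finite part unchanged, yields a cuspidal automorphic representation $\Pi^{\rm h}$ of $\GSp_4(\A_E)$ (cuspidality and automorphy of this swap follow from Arthur's classification, or in this low-rank setting from the work of Roberts--Schmidt and Jiang--Soudry that underlies Theorem~\ref{thm:descent}) with the same functorial transfer $\Pi'$ to $\GL_4(\A_E)$, since functorial transfer depends only on the $L$-packet.

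I expect the main obstacle to be the book-keeping needed to pass from the \emph{generic} representation $\Pi$ produced by the descent of Theorem~\ref{thm:theta-lifts} to a \emph{holomorphic} sibling $\Pi^{\rm h}$ while preserving cuspidality and the transfer to $\GL_4$; in particular one must invoke the multiplicity-one/packet structure for $\GSp_4$ (Arthur, or Gan--Takeda) to know that exchanging archimedian components within the same discrete series $L$-packet keeps the result in the discrete automorphic spectrum, and that the global packet is stable (not endoscopic) in this case because $\Pi'$ is cuspidal on $\GL_4$. A secondary point is matching central characters: the central character of $\Pi'$ is $\omega_{|E}$ by Lemma~\ref{lem:self-dual}, and one should record that the similitude character of $\Pi^{\rm h}$ can be normalised compatibly with $\omega'$, which is where the hypothesis $\omega = \mathrm{N}_{F/E}\circ\omega'$ is used beyond its role in Theorem~\ref{thm:theta-lifts}. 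The verification that a discrete series $L$-packet of $\GSp_4(\R)$ with the given infinitesimal character is nonempty and contains a holomorphic member is routine given the weight condition $k_\tau\ge 2$, so I would state it and refer to Roberts--Schmidt for the archimedian packet structure rather than reprove it.
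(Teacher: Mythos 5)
Your proposal is correct and follows essentially the same route as the paper: apply Theorem~\ref{thm:theta-lifts} to get the globally generic cuspidal $\Pi$ on $\GSpin_5(\A_E)\simeq\GSp_4(\A_E)$, observe via Arthur's classification that its global packet is generic (hence stable and tempered, with local $L$-packets $\{\Pi_\nu^{\rm g},\Pi_\nu^{\rm h}\}$ at $\nu\mid\infty$), and swap in the holomorphic (limit of) discrete series members while keeping $\Pi_f$, exactly as in the paper's proof (which follows the argument of \cite[Proposition 15.4]{pil17}). Your extra remarks on the archimedian parameter computation and central-character normalisation are harmless elaborations of the same argument.
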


\begin{proof} We use the same argument as in the proof of \cite[Proposition 15.4]{pil17}.
Let $\Pi$ be the lift of $\pi$ obtained from Theorem~\ref{thm:theta-lifts}, and write 
$$\Pi = \Pi_\infty^{\rm g} \otimes \Pi_f = \left(\bigotimes_{\nu \mid \infty}\Pi_\nu^{\rm g}\right) \otimes \Pi_f,$$
where $\Pi_\infty^{\rm g}$ and $\Pi_f$ are the infinite and finite parts 
of $\Pi$, respectively. This is a globally generic, cuspidal, automorphic representation of $\GSpin_{5}(\A_E) \simeq \GSp_4(\A_E)$. 
Therefore, the global A-packet of $\Pi$ is generic in the sense of \cite[The Classification Theorem]{art04}.
Hence, it is stable and tempered. This means that every local component of this A-packet is an L-packet. 
For every $\nu \in J_E$, the L-packet of $\Pi_\nu^{\rm g}$ is $\{\Pi_\nu^{\rm g}, \Pi_\nu^{\rm h} \}$, where
$\Pi_\nu^{\rm h}$ is the holomorphic (limit of) discrete series of the same weight as $\Pi_\nu^{\rm g}$ (see \cite{sch15}). Therefore, by setting 
$$\Pi^{\rm h} = \Pi_\infty^{\rm h} \otimes \Pi_f = \left(\bigotimes_{\nu \mid \infty}\Pi_\nu^{\rm h}\right) \otimes \Pi_f,$$
we obtain an holomorphic automorphic representation $\Pi^{\rm h}$ of $\GSp_4(\A_E)$, which belongs to the same A-packet as $\Pi$.
\end{proof}

\begin{rem}\rm For $F$ real quadratic, Johnson-Leung-Roberts \cite{jlr12} construct a lift to $\GSp_4(\A_\Q)$ whose infinite component is holomorphic.
This lift, also known as the twisted Yoshida lift has been studied extensively, see \cite[Theorem 8.2]{rob01}, \cite[Th\'eor\`eme 6.2]{vig84} and \cite[Section 7.3]{mt02}. 
It is usually constructed using theta correspondence. Corollary~\ref{cor:jlr-lifts} gives a different way of obtaining this lift.  
\end{rem}

\subsection{The weight of the lift}  
Let $G = \GSpin_{2g+1}$ and $G' = \GL_{2g}$, and recall that $\widehat{G} = \GSp_{2g}$ and $\widehat{G'} = \GL_{2g}$. 
(See \cite{as14} or \cite{mt02} for a nice description of these groups in terms of their root data.) Let$(B, T)$ be a pair of a Borel subgroup $B$ and a 
maximal torus $T$ in $G$. Also let $(B',T')$ be the pair consisting of the standard Borel and maximal torus of $G'$.
 We have the pairs $(\widehat{B}, \widehat{T})$ and $(\widehat{B'}, \widehat{T'})$ for the dual groups $\widehat{G}$ and
 $\widehat{G'}$, respectively. Let $X^*(\widehat{T})$ and $X^*(\widehat{T'})$ (resp. $X_{*}(\widehat{T})$ and $X_{*}(\widehat{T'})$) 
 be the corresponding character (resp. cocharacter) lattices. The natural inclusion of dual tori $\widehat{T} \hookrightarrow \widehat{T'}$ induces 
 a homomorphism $X_{*}(\widehat{T})\otimes \Q \hookrightarrow X_{*}(\widehat{T'})\otimes \Q$ of cocharacter lattices by composition. 
 Let $\delta$ and $\delta'$ be the half-sum of the positive coroots in
$X_{*}(\widehat{T})$ and $X_{*}(\widehat{T'})$, respectively. 

\begin{lem}\label{lem:weight-of-transfer}
Let $\Pi$ be a globally generic cuspidal automorphic representation of $\GSpin_{2g+1}(\A_E)$ of weight $\ul$, and $\Pi'$ its functorial transfer to $\GL_{2g}(\A_E)$. 
Then, the weight $\ul'$ of $\Pi'$ is the image of $\ul$ under the inclusion map $X_{*}(\widehat{T})\otimes \Q \hookrightarrow X_{*}(\widehat{T'})\otimes \Q$.
\end{lem}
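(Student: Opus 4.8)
The plan is to reduce the statement to a purely combinatorial identity between infinitesimal characters, using the fact that the weight of an automorphic representation with (limit of) discrete series at infinity is determined, via the Harish-Chandra isomorphism, by the infinitesimal character of the archimedean component, and that functorial transfer matches archimedean $L$-parameters. First I would recall that for a connected reductive group $H$ over a number field with dual $\widehat{H}$, the infinitesimal character of an $L$-packet corresponds (after a choice of maximal torus) to a $W$-orbit in $X_*(\widehat{T}_H)\otimes\C \cong \mathfrak{t}_H^*$, and that the "weight" $\underline{\lambda}$ in the sense used in Section~\ref{sec:auto-reps-gln} is the dominant representative of this orbit shifted by $\delta_H$ (the half-sum of positive coroots of $H$, equivalently the half-sum of positive roots of $\widehat{H}$). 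Concretely, if $\Pi_\nu$ has parameter $\phi_\nu\colon W_{E_\nu}\to\widehat{G}(\C)$, then restricting $\phi_\nu$ to $W_\C = \C^\times$ and composing with the projection to $\widehat{T}$ gives a cocharacter-valued character $z\mapsto z^{\mu}\bar z^{\nu}$, and $\mu + \delta$ (suitably normalised) is the weight.

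The key steps, in order, are: (1) fix compatible Borel/torus data and describe explicitly the archimedean $L$-parameter $\phi_\nu$ of $\Pi_\nu$ in terms of $\underline{\lambda}$ — namely $\phi_\nu|_{\C^\times}$ has "Hodge–Tate–type" exponents given by the entries of $\underline{\lambda} + \delta$ read off in the standard coordinates on $X_*(\widehat{T}) = X^*(T)$; (2) invoke the fact (Theorem~\ref{thm:descent} together with the strong transfer remark following it, i.e. \cite[Proposition 5.1]{as14}) that $\Pi'$ is the \emph{strong} functorial transfer of $\Pi$, so that for every place $\nu$ — in particular every archimedean $\nu$ — the parameter of $\Pi'_\nu$ is $\iota\circ\phi_\nu$, where $\iota\colon\widehat{G}(\C)=\GSp_{2g}(\C)\hookrightarrow\GL_{2g}(\C)=\widehat{G'}(\C)$; (3) compute the effect of $\iota$ on the torus: $\iota$ restricts to the inclusion $\widehat{T}\hookrightarrow\widehat{T'}$, hence on cocharacters it is the map $X_*(\widehat{T})\hookrightarrow X_*(\widehat{T'})$ of the statement, so the exponents of $\iota\circ\phi_\nu|_{\C^\times}$ are the images of the exponents of $\phi_\nu|_{\C^\times}$; (4) translate back: the weight $\underline{\lambda}'$ of $\Pi'$ is, by the same dictionary as in step (1) applied to $\GL_{2g}$, obtained from these exponents by subtracting $\delta'$. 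The only subtlety is bookkeeping with the $\delta$-shifts: one must check that the image of $\delta$ under $X_*(\widehat{T})\otimes\Q\hookrightarrow X_*(\widehat{T'})\otimes\Q$ differs from $\delta'$ by the contribution of the roots of $\GL_{2g}$ not coming from $\GSp_{2g}$, and that this discrepancy is exactly absorbed by the normalisation $|\!\det\!|^{(1-n)/2}$ (equivalently $|\!\det\!|^{-1/2}$ in the $\GL_2$ normalisation of Theorem~\ref{thm:galois-reps-for-hmf}) that appears in local-global compatibility — so that after all normalisations the weight transfers \emph{linearly}, via precisely the inclusion $X_*(\widehat{T})\otimes\Q\hookrightarrow X_*(\widehat{T'})\otimes\Q$, with no extra shift. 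This makes the map well-defined on weights even though on individual infinitesimal characters there is a shift, because both sides are measured relative to their own $\delta$.

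The main obstacle I expect is step (1): pinning down the precise relationship between the abstractly-defined "weight $\underline{\lambda}$" of a $\GSpin_{2g+1}$-automorphic representation (whose archimedean components are only \emph{limits} of discrete series, as emphasised in the introduction and the remark after Lemma~\ref{lem:fld-of-rationality}) and the coordinates of its archimedean parameter. For genuine discrete series this is classical, but at the limit-of-discrete-series locus — which is exactly where Gross's conjectural $\Pi$ lives — one has to be careful that the notion of "weight" is still unambiguously the $\delta$-shift of the infinitesimal character and that the parametrisation of the relevant limit-of-discrete-series $L$-packet is the expected one. Once that identification is in hand, steps (2)–(4) are formal: step (2) is quoted from \cite{as14}, and steps (3)–(4) are the elementary observation that the embedding of dual groups is "the" embedding on dual tori, so the induced map on (co)character lattices tensored with $\Q$ is literally the inclusion in the statement, and the weight is a $\Q$-linear function of the infinitesimal character relative to $\delta$, hence transfers by that same inclusion.
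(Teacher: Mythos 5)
Your overall skeleton is the same as the paper's proof: identify the weight with the $\delta$-shifted datum (the Harish--Chandra parameter $\eta_\nu=\lambda_\nu+\delta$) attached to the archimedean $L$-parameter, invoke the fact that the transfer is \emph{strong} (\cite[Proposition 5.1]{as14}) so that $\phi'_\nu=\iota\circ\phi_\nu$ at every infinite place, note that $\iota$ restricts on dual tori to the inclusion $X_{*}(\widehat{T})\otimes\Q\hookrightarrow X_{*}(\widehat{T'})\otimes\Q$, and translate back using $\delta'$. The paper does exactly this, and it also does not treat your ``main obstacle'' (limits of discrete series) as an issue: it simply takes the parameters at infinity to be determined by their Harish--Chandra parameters, which is the same identification you propose in step (1).

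The one substantive problem is at precisely the point you flag as the subtlety. You predict that the image of $\delta$ under the inclusion differs from $\delta'$ and that the mismatch is ``absorbed by the normalisation $|\det|^{(1-n)/2}$.'' Neither half of this is right. The transfer $\Pi\mapsto\Pi'$ is an identity of $L$-parameters with no twist, and both archimedean parameters are read off in the same normalisation of the local correspondence, so the normalisation twist appearing in local-global compatibility cannot enter the comparison; if the image of $\delta$ really differed from $\delta'$, the weights would be related by the inclusion composed with a translation and the lemma as stated (plain inclusion, no shift) would be false. What actually happens --- and this is how the paper closes the argument --- is that the discrepancy is zero: the positive coroots of $\widehat{G}=\GSp_{2g}$ form a system of type $B_g$ inside $X_{*}(\widehat{T})=X^{*}(T)$, with half-sum $\bigl(g-\tfrac12,\,g-\tfrac32,\,\ldots,\,\tfrac12\bigr)$ (and trivial similitude coordinate), and under the embedding of the symplectic similitude torus into the diagonal torus of $\GL_{2g}$ this maps to $\bigl(g-\tfrac12,\ldots,\tfrac12,-\tfrac12,\ldots,-(g-\tfrac12)\bigr)$, which is exactly $\delta'$ for type $A_{2g-1}$. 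Replacing your twist argument by this direct root-datum computation, your steps (1)--(4) reproduce the paper's proof.
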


\begin{proof} Let $\phi_\nu$ and $\phi_\nu'$ be the respective Langlands parameters of $\Pi_\nu$ and $\Pi_\nu'$ 
(see~ Section~\ref{sec:auto-descent}), so that we have: 
\begin{align*}
\phi_\nu:\, W_\R \to \GSp_{2g}(\C),\,\,\text{and}\,\, \phi_\nu':\,W_\R \to \GL_{2g}(\C).
\end{align*}
These Langlands parameters are determined by their Harish-Chandra parameters 
which are the orbits of $\eta_\nu := \lambda_\nu + \delta$ and $\eta_\nu' := \lambda_\nu' + \delta'$ under the respective Weyl groups.
The fact that $\iota \circ \phi_\nu = \phi_\nu'$ means that the Weyl orbit of $\eta_\nu'$ is the image of the Weyl orbit
of $\eta_\nu$ under the map $X_{*}(\widehat{T})\otimes\Q \hookrightarrow X_{*}(\widehat{T'})\otimes\Q$. We recall that $X_{*}(\widehat{T}) = X^{*}(T)$
and $X_{*}(\widehat{T'}) = X^{*}(T')$. Since $G$ and $G'$ are of type $B_g$ and $A_{2g-1}$, respectively, one sees that
$\delta$ maps to $\delta'$. Hence $\lambda_\nu$ maps to $\lambda_\nu'$ for all $\nu \in J_E$. Therefore, $\ul$ maps to $\ul'$. 
\end{proof}

\begin{rem}\rm In \cite[\S 6]{as06}, there is an explicit recipe which relates the Langlands pa\-ra\-me\-ters of $\Pi$ and $\Pi'$. Lemma~\ref{lem:weight-of-transfer}
can be seen as a reinterpretation of this recipe in terms of weights. Similarly, for $G = \GSp_4$ and $G' = \GL_4$, and $\Pi$ a holomorphic cuspidal
Hilbert-Siegel automorphic representation on $G$, there is a recipe in \cite[\S 2]{sor10}, which gives the Langlands parameters of its 
functorial transfer $\Pi'$ to $G'$ in terms of the Harish-Chandra parameters of $\Pi$ using theta correspondence. This recipe can also be recovered from
Lemma~\ref{lem:weight-of-transfer} and the (accidental) isomorphism $\widehat{G} = \GSpin_5 \simeq G$. 
\end{rem}

\begin{cor}\label{cor:weight-of-transfer} 
Let $\pi$ be a cuspidal Hilbert automorphic representation of weight $\uk$ which satisfies the conditions of Theorem~\ref{thm:lifts}. 
Let $\uk_E = (k_{E, \nu})_{\nu \in J_E}$, where $k_{E,\nu} = (k_{\tau})_{\tau \in S_{\nu}}$ and 
$S_{\nu} = \left\{\tau \in J_F :\, \tau_{|E} = \nu\right\}$. Up to labelling and reordering the set of places in $S_\nu$, we can assume
that $k_{E, \nu} = (k_{\nu,1},\,\ldots,\,k_{\nu,g})$ such that $k_{\nu,1}\ge k_{\nu,2} \ge \cdots \ge k_{\nu, g}\ge 2$. Then, the weight
$\ul$ of $\Pi$ is the inverse image, under the inclusion map $X_{*}(\widehat{T})\otimes\Q \hookrightarrow X_{*}(\widehat{T'})\otimes\Q$, 
of the weight $\ul' = (\lambda_\nu')_{\nu \in J_E}$ of $\Pi'$ given by 
$$\lambda'_\nu = (k_{\nu,1} + m_{\nu, 1} - 2g, \ldots, k_{\nu,g} + m_{\nu, g} - g - 1, m_{\nu, g} - g + 1,\ldots, m_{\nu,1}).$$
\end{cor}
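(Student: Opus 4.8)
The plan is to reduce Corollary~\ref{cor:weight-of-transfer} to Lemma~\ref{lem:weight-of-transfer} by identifying the weight $\ul'$ of $\Pi' = \AI_{E/F}(\pi)$ explicitly, since once $\ul'$ is known, $\ul$ is determined as its inverse image under the cocharacter inclusion (the inclusion being injective on the relevant rational lattices, and the image $\ul'$ lying in it by the compatibility of half-sums of coroots established in that lemma). So the work is entirely on the $\GL_{2g}$ side: compute the Hodge--Tate weights, equivalently the infinitesimal character, of $\Pi'$ at each archimedean place $\nu$ of $E$.

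First I would recall from Theorem~\ref{thm:base-change}(i) that $r_\lambda(\AI_{E/F}(\pi))|_{G_F} \simeq \bigoplus_{i=0}^{g-1} r_\lambda(\pi^{\sigma^i})$, so for an embedding $\nu\colon E\hookrightarrow \overline{L}_{\pi,\lambda}$ the Hodge--Tate multiset $\mathrm{HT}_\nu(r_\lambda(\Pi'))$ is the union $\bigcup_{\tau\in S_\nu}\mathrm{HT}_\tau(r_\lambda(\pi))$, where $S_\nu = \{\tau\in J_F : \tau|_E = \nu\}$ (a set of size $g$ since $[F:E]=g$). By Theorem~\ref{thm:galois-reps-for-hmf}(3), $\mathrm{HT}_\tau(r_\lambda(\pi)) = \{m_\tau, k_\tau + m_\tau - 1\}$, so after relabelling $S_\nu$ so that $k_{\nu,1}\ge\cdots\ge k_{\nu,g}\ge 2$ and writing $m_{\nu,j}$ for the corresponding shift, the multiset of Hodge--Tate weights of $\Pi'$ at $\nu$ is $\{m_{\nu,j},\, k_{\nu,j}+m_{\nu,j}-1 : 1\le j\le g\}$. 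Then, translating via Theorem~\ref{thm:galois-reps-for-gln}(iii) — which says the Hodge--Tate weights of a weight-$\ua$ representation on $\GL_n$ are $\{a_{\tau,n}, a_{\tau,n-1}+1,\ldots,a_{\tau,1}+n-1\}$ — one reads off $\lambda'_\nu = (a_{\nu,1},\ldots,a_{\nu,2g})$ by subtracting the regularizing shift $\delta'=(2g-1,2g-2,\ldots,1,0)$ from the sorted Hodge--Tate weights. Sorting $\{m_{\nu,j}\}\cup\{k_{\nu,j}+m_{\nu,j}-1\}$ in decreasing order — using $k_{\nu,j}\ge 2$ and the monotonicity of the $k_{\nu,j}$ — gives the top $g$ entries as the $k_{\nu,j}+m_{\nu,j}-1$ and the bottom $g$ as the $m_{\nu,j}$ (modulo checking the interleaving, which is where the $k_{\nu,j}\ge 2$ hypothesis and the ordering convention of the $S_\nu$ are used), and subtracting $\delta'$ yields
\begin{align*}
\lambda'_\nu = (k_{\nu,1}+m_{\nu,1}-2g,\ \ldots,\ k_{\nu,g}+m_{\nu,g}-g-1,\ m_{\nu,g}-g+1,\ \ldots,\ m_{\nu,1}),
\end{align*}
which is the claimed formula.

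The main obstacle I anticipate is bookkeeping rather than conceptual: one must be careful that the ``sorted'' order of the combined Hodge--Tate multiset at $\nu$ really is the naive concatenation suggested by the formula, i.e. that all the ``high'' weights $k_{\nu,j}+m_{\nu,j}-1$ sit above all the ``low'' weights $m_{\nu,j}$ after the $\delta'$-shift, and that within each block the decreasing order is respected. This is not automatic from $k_{\nu,j}\ge 2$ alone but follows once one unwinds the regular-algebraicity constraint $a_{\tau,1}+a_{\tau,2}=w$ (equivalently the common parity/normalization of the $k_\tau$) that is built into the setup of Section~\ref{sec:lifts-hmf}, together with the stipulated reordering of $S_\nu$; a remark isolating this point, or an appeal to the regularity of $\pi$'s weight, suffices. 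Once the sorted order is pinned down, the identification of $\lambda'_\nu$ is immediate, and Lemma~\ref{lem:weight-of-transfer} delivers $\ul$ as its preimage, completing the proof.
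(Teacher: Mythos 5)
Your proposal is correct and follows essentially the same route as the paper: compute the Hodge--Tate weights of $\Pi'=\AI_{E/F}(\pi)$ at each $\nu\in J_E$ as the union of the weights $\{m_\tau,\,k_\tau+m_\tau-1\}$ over $\tau\in S_\nu$, convert this to the weight $\ul'$ via Theorem~\ref{thm:galois-reps-for-gln}(iii), and conclude with Lemma~\ref{lem:weight-of-transfer}. The only difference is that you spell out the sorting/interleaving bookkeeping (using the normalization $m_\tau=(k_0-k_\tau)/2$) that the paper leaves implicit, which is a harmless and indeed welcome addition.
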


\begin{proof} For each prime $\lambda$ of $L_{\pi}$, the Hodge-Tate weights of $r_{\lambda}(\Pi')$ are given by
$$\mathrm{HT}_{\nu}(r_\lambda(\Pi')) = \{m_{\nu, 1},\ldots, m_{\nu,g}, k_{\nu,g} + m_{\nu, g} - 1, \ldots, k_{\nu,1} + m_{\nu, 1} - 1\}.$$
Therefore, the weight of $\Pi'$ is given by $\ul' = (\lambda_\nu')_{\nu \in J_E}$ where 
$$\lambda'_\nu = (k_{\nu,1} + m_{\nu, 1} - 2g, \ldots, k_{\nu,g} + m_{\nu, g} - g - 1, m_{\nu, g} - g + 1,\ldots, m_{\nu,1}).$$
We conclude by applying Lemma~\ref{lem:weight-of-transfer}. 
 \end{proof}

\begin{rem}\rm Corollary~\ref{cor:weight-of-transfer} implies that the weight of the lift $\Pi$ in Theorem~\ref{thm:theta-lifts}
is regular if and only if the $k_\tau$'s are pairwise distinct, in which case $k_0 \ge 2g$. In particular, automorphic inductions of 
cuspidal Hilbert automorphic representation of parallel weight $2$ are always limits of discrete series. 
\end{rem}



\subsection{\bf Level structure and new vectors theory for the lift} Let $\gN$ be the level of the Hilbert automorphic representation $\pi$ in Theorem~\ref{thm:theta-lifts}.
Then, the level structure of $\Pi$ is the paramodular group $K^{\rm par}(\gN')$ where $\gN' = \mathrm{N}_{F/E}(\gN)\mathfrak{D}_{F/E}^g$ and 
$\mathfrak{D}_{F/E}$ is the different of the relative extension $F/E$. We refer to \cite{gross15} for the description of $K^{\rm par}(\gN')$ as
well as the theory of new vectors. Although the discussion in there is concerned with $\SO_{2g+1}$ over $\Q$, the results extend to $\GSpin_{2g+1}$ 
over general number fields.

\section{\bf Descent of isogeny classes of abelian varieties}\label{sec:descent-ab-av}

Let $A$ be an abelian variety over $F$, with $L = \End_{F}(A)\otimes \Q$.
Let $\lambda$ be a prime in $L$, and denote by $\rho_{A, \lambda}: \Gal(\Qbar/F) \to \GSp_{2g}(L_{\lambda})$  the Galois 
representation into the $\lambda$-adic Tate module of $A$. If $A$ is of $\GL_2$-type, in which case $\dim(A) = [L:\Q]$, then $g = 1$, and we have 
$\rho_{A, \lambda}: \Gal(\Qbar/F) \to \GL_2(L_{\lambda})$. In that case, we let
$$ a_{\gp} := \Tr(\rho_{A, \lambda}(\Frob_{\gp})).$$
Assume that $F/E$ is a Galois extension, and let $\rho_{\lambda}: \Gal(\Qbar/F) \to \GL(V_{\lambda})$ be a $\lambda$-adic representation.
There is a natural action of $\Gal(F/E)$ on the set of such representations given by
$$(\sigma\cdot \rho_{\lambda})(\Frob_{\gp}) := \rho_{\lambda}(\Frob_{\sigma(\gp)}).$$

\begin{thm}\label{thm:av-descent1} Let $F/E$ and $L/K$ be extensions of totally real number fields of
degree $g$. Let $B$ be an abelian variety of dimension $[L:\Q]$ defined over $E$. 
Suppose that the followings hold:
\begin{enumerate}[1.]
\item $F$ is a cyclic extension of $E$ of degree $g$;
\item $\End_{E}(B)\otimes \Q = K$;
\item $\End_F(A)\otimes \Q = L$, where $A = B \times_E F$, and $B$ does not become of $\GL_2$-type
over any proper sub-extension $F'/E$ of $F$.
\end{enumerate}
Then $L/K$ is cyclic, and there exists a generator $\tau \in \Gal(L/K)$ such that
$$ a_{\sigma(\gp)}  = \tau( a_\gp),\,\,\text{for all primes}\,\,\gp.$$
\end{thm}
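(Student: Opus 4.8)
The plan is to exploit the action of $\Gal(F/E)$ on the Galois representations attached to $A=B\times_E F$ and reduce the statement about $a_\gp$ to a statement about automorphic representations, where Lemma~\ref{lem:fld-of-rationality} already did the relevant book-keeping on fields of rationality. First I would pass from $B$ to the associated automorphic data: since $B$ is an abelian variety over $E$ with $\End_E(B)\otimes\Q=K$, it is of $\GL_2$-type over $E$, hence (by Eichler--Shimura / modularity, which is implicit in the hypotheses of the ambient results) corresponds to a Hilbert automorphic representation, or at least a compatible system, over $E$ with coefficient field $K$; similarly $A$ over $F$ is of $\GL_2$-type with coefficient field $L$, and $\rho_{A,\lambda}\simeq \BC_{F/E}(\rho_{B,\lambda})$ as $\lambda$-adic representations by the compatibility of base change with restriction of Galois representations (Theorem~\ref{thm:base-change}(i)). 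The key input is hypothesis~(3): $B$ does not become of $\GL_2$-type over any proper subextension $F'/E$. Translated, the automorphic representation $\pi_B$ over $E$ is not an automorphic induction from, nor does its base change to any intermediate field acquire extra endomorphisms from, a smaller field --- equivalently $\pi:=\BC_{F/E}(\pi_B)$ has coefficient field $L$ strictly larger than $K$ and $[\pi]\ne[{}^\sigma\!\pi']$ for proper subfields.

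Next I would establish the Galois-equivariance. For $\sigma$ a fixed generator of $\Gal(F/E)$, the representation ${}^\sigma\!\rho_{A,\lambda}$ satisfies $({}^\sigma\!\rho_{A,\lambda})(\Frob_\gp)=\rho_{A,\lambda}(\Frob_{\sigma(\gp)})$, so its traces are $a_{\sigma(\gp)}$. On the other hand, because $\rho_{A,\lambda}$ comes by restriction from $\rho_{B,\lambda}$ over $E$, and $E$ is fixed by $\sigma$, the representation ${}^\sigma\!\rho_{A,\lambda}$ is again a base change from $E$ of the same $\rho_{B,\lambda}$ --- more precisely ${}^\sigma\!\rho_{A,\lambda}\simeq \rho_{A,\lambda}$ as $\Gal(\Qbar/F)$-representations into $\GL_2(\overline{\Q}_\ell)$, but the lattice/$L$-structure is transported by an automorphism of $L$. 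That is: the $\sigma$-conjugate of the abelian variety $A$ over $F$ is $F$-isogenous to $A$ itself (since $A$ descends to $E$), but the induced action on the endomorphism algebra $L=\End_F(A)\otimes\Q$ is a nontrivial automorphism $\tau$ of $L$ fixing $K$. Concretely, the Hecke orbit identity $[\pi]=[{}^\sigma\!\pi]$ of Lemma~\ref{lem:fld-of-rationality} supplies, for each generator $\sigma$ of $\Gal(F/E)$, a unique $\tau\in\Aut(L)$ with ${}^\sigma\!\pi=\pi^\tau$, and the map $\sigma\mapsto\tau$ is injective with image generating $\Gal(L/K)$ where $K=L^{\langle\tau\rangle}=L_\Pi$. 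Hence $L/K$ is cyclic of degree $g$, giving the first assertion.

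For the trace identity, I would compare $a_{\sigma(\gp)}$ and $\tau(a_\gp)$ prime by prime using strong multiplicity one. We have $a_{\sigma(\gp)}=\Tr\rho_{A,\lambda}(\Frob_{\sigma(\gp)})$, which equals the $\Frob_\gp$-trace of ${}^\sigma\!\rho_{A,\lambda}$, i.e.\ the $\gp$-Hecke eigenvalue of ${}^\sigma\!\pi$. By the model-theoretic statement ${}^\sigma\!\pi_{f,0}=\pi_{f,0}^\tau$ from the proof of Lemma~\ref{lem:fld-of-rationality}, the $\gp$-Hecke eigenvalue of ${}^\sigma\!\pi$ is $\tau$ applied to the $\gp$-Hecke eigenvalue of $\pi$, which is exactly $\tau(a_\gp)$. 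Matching the two descriptions gives $a_{\sigma(\gp)}=\tau(a_\gp)$ for all $\gp$ unramified; the ramified primes then follow either by continuity of the compatible system or because the identity of global automorphic representations forces equality of all local components.

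The main obstacle I anticipate is the careful identification of the automorphism $\tau$ and the verification that the map $\sigma\mapsto\tau$ really is an injective homomorphism onto $\Gal(L/K)$ --- in other words, promoting the set-level Hecke-orbit symmetry $[\pi]=[{}^\sigma\!\pi]$ to a genuine Galois action compatible with the cyclic structure of $\Gal(F/E)$. This is essentially the content already extracted in Lemma~\ref{lem:fld-of-rationality}, but here one must also check that the resulting $\tau$ is the \emph{same} automorphism that governs the descent of endomorphisms of $A$ (so that ``$\tau(a_\gp)$'' is unambiguous and independent of $\ell$), which requires knowing that the Hecke eigenvalues $a_\gp$ generate $L$ over $K$ and that the compatible system is rigid enough that automorphisms are detected on Frobenius traces. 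Once that dictionary is in place, the trace identity is immediate from strong multiplicity one.
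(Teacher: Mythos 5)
There is a genuine gap, and it begins with your very first step: $B$ is \emph{not} of $\GL_2$-type over $E$. Its dimension is $[L:\Q]=g\,[K:\Q]$ while $\End_E(B)\otimes\Q=K$, so for $g\ge 2$ there is no Hilbert automorphic representation $\pi_B$ on $\GL_2(\A_E)$ attached to $B$, and expressions like $\BC_{F/E}(\pi_B)$ or ``$\rho_{A,\lambda}\simeq\BC_{F/E}(\rho_{B,\lambda})$'' do not make sense (the representations have different ranks over different coefficient fields); the object conjecturally attached to $B$ lives on $\GSpin_{2g+1}(\A_E)$, which is the whole point of the paper. More seriously, the theorem carries no modularity hypothesis at all: it is a purely geometric statement about abelian varieties, and modularity of $A$ is not ``implicit in the ambient results'' --- proving automorphy of $A$ and $B$ is exactly what Theorem~\ref{thm:gross-evidence1} does later, under extra hypotheses, \emph{using} the present theorem (and Theorem~\ref{thm:av-descent}) as input, so your route is circular in spirit as well as conditional. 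The paper's proof is unconditional and stays entirely on the Galois side: by Faltings (semisimplicity of the Tate module and the Tate conjecture for endomorphisms) together with Ribet's results on $\GL_2$-type varieties, one gets $\rho_{B,\lambda'}\otimes_{K_{\lambda'}}L_\lambda\simeq\rho_{A,\lambda}\oplus\sigma\cdot\rho_{A,\lambda}\oplus\cdots\oplus\sigma^{g-1}\cdot\rho_{A,\lambda}\simeq\Ind_E^F(\rho_{A,\lambda})$ with pairwise inequivalent summands by hypothesis (3); comparing the characteristic polynomial of $\rho_{B,\lambda'}(\Frob_\gp)$ computed in two ways gives $\prod_{\theta\in S_K}(x^2-\theta(a_\gp)x+\N\gp)=\prod_{i=0}^{g-1}(x^2-a_{\sigma^i(\gp)}x+\N\gp)$, and since the $a_\gp$ generate $L$ over $K$ (Tate's conjecture, via Ribet), the assignment $a_\gp\mapsto a_{\sigma(\gp)}$ is a well-defined field automorphism $\tau$ fixing $K$; a totally split prime with $a_\gp$ primitive makes $\sigma\mapsto\tau$ injective, whence $\Gal(F/E)\simeq\Gal(L/K)$ and the cyclicity.

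Even if one granted modularity of $A$, your appeal to Lemma~\ref{lem:fld-of-rationality} does not close the argument. That lemma \emph{assumes} the Hecke-orbit invariance $[\pi]=[{}^\sigma\!\pi]$ and concerns the field of rationality of the automorphic induction; here the invariance is something to be deduced from the existence of the descent $B$, and the lemma says nothing about endomorphism algebras of abelian varieties. In particular, identifying the fixed field of the Hecke-orbit automorphism with $K=\End_E(B)\otimes\Q$, and checking that this $\tau$ coincides with the automorphism describing the $\Gal(F/E)$-action on $L=\End_F(A)\otimes\Q$ (so that $\tau(a_\gp)$ is unambiguous and independent of $\ell$), is precisely the point you flag as ``the main obstacle''; it cannot be outsourced to strong multiplicity one or to Lemma~\ref{lem:fld-of-rationality}, and filling it requires the Faltings/Tate-module analysis that constitutes the paper's actual proof.
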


\begin{proof} Let $V_\ell$ be the underlying $\Q_\ell$-vector space to the 
$\ell$-adic Tate modules of $A$ and $B$; and
$$\rho_{A, \ell} : \Gal(\Qbar/F) \to \GL(V_\ell),\,\,\text{and}\,\,\rho_{B, \ell} : \Gal(\Qbar/E) \to \GL(V_\ell)$$
the corresponding Galois representations. By Faltings \cite[S\"atzen 3 and 4]{fal83}, $V_\ell$ is a semi-simple 
$\Gal(\Qbar/F)$-module, and we have
$$L \otimes \Q_\ell = \End_F(A) \otimes \Q_\ell = \End_{\Q_\ell[\Gal(\Qbar/F)]}(V_\ell).$$
Let $\lambda \mid \ell$ be a prime of $L$, and $L_\lambda$ the completion of $L$ at $\lambda$, and set
$V_\lambda= V_\ell \otimes_{L\otimes \Q_\ell}L_\lambda$. Since $A$ is of $\GL_2$-type, $V_\lambda$
is a rank two $L_\lambda$-module. Let $\rho_{A, \lambda}: \Gal(\Qbar/F) \to \GL(V_\lambda)$ be
the corresponding $\lambda$-adic Tate module. By \cite[Proposition 3.3]{ribet04},
$V_\lambda$ is absolute irreducible with $\End_{\Q_\ell[\Gal(\Qbar/F)]}(V_\lambda) = L_\lambda$ 
and $\End_{L_{\lambda}}(V_{\lambda}) = L_{\lambda}$.

Similarly, let $\lambda' \mid \ell$ be a prime of $K$, and $K_{\lambda'}$ the completion of $K$ at $\lambda'$,
and set $V_{\lambda'}= V_\ell \otimes_{K\otimes \Q_\ell}K_{\lambda'}$. Since $V_\ell$ has rank $2g$
over $K \otimes \Q_\ell$, $V_{\lambda'}$ is a $K_{\lambda'}$-module of rank $2g$. A similar argument
as in \cite[Proposition 3.3]{ribet04}, shows that $V_{\lambda'}$ is absolute irreducible as a 
$K_{\lambda'}[\Gal(\Qbar/E)]$-module with $\End_{\Q_\ell[\Gal(\Qbar/E)]}(V_{\lambda'}) = K_{\lambda'}$ 
and $\End_{K_{\lambda'}}(V_{\lambda'}) = K_{\lambda'}$. As above, the corresponding $\lambda'$-adic Tate modules 
$\rho_{A,\lambda'}:\,\Gal(\Qbar/F) \to \GL(V_{\lambda'})$ and $\rho_{B,\lambda'}:\,\Gal(\Qbar/E) \to \GL(V_{\lambda'})$ 
can also be written as
$$\rho_{A, \lambda'}: \Gal(\Qbar/F) \to \GSp_{2g}(K_{\lambda'}),\,\,\text{and}\,\, \rho_{B, \lambda'}: \Gal(\Qbar/E) \to \GSp_{2g}(K_{\lambda'}).$$ 
Let $\lambda$ be a prime of $L$ lying above $\lambda'$ in $K$. Since $\rho_{A,\lambda'} = \rho_{B,\lambda'}|_{\Gal(\Qbar/F)}$ is reducible
after extension of scalars to $L_\lambda$, and $\sigma\cdot \rho_{A, \lambda'} = \rho_{A, \lambda'}$, we have that
$$\rho_{A, \lambda'}\otimes_{K_{\lambda'}} L_\lambda = \rho_{A, \lambda} \oplus \sigma\cdot\rho_{A, \lambda}\oplus \cdots \oplus\sigma^{g-1}\cdot\rho_{A, \lambda}.$$ 
Since $B$ doesn't become of $\GL_2$-type over any proper subfield $F'/E$ of $F$, the summands are pairwise inequivalent. 
This means that the representation $\Ind_{E}^{F}(\rho_{A,\lambda})$ is irreducible, and we have 
$$\rho_{B,\lambda'}\otimes_{K_{\lambda'}}L_\lambda =  \Ind_{E}^{F}(\rho_{A,\lambda}).$$
So $\Ind_{E}^{F}(\rho_{A,\lambda})$ is defined over $K$. 

Let $\gp$ be a prime of $F$, and recall that $ a_{\gp} = \Tr(\rho_{A, \lambda}(\Frob_{\gp})) \in L$.
By construction, we have 
$$(\sigma\cdot \rho_{A, \lambda})(\Frob_{\gp}) = \rho_{A, \lambda}(\Frob_{\sigma(\gp)}),$$
and
$$\Tr(\rho_{A, \lambda}(\Frob_{\sigma(\gp)}) = a_{\sigma(\gp)} \in L.$$
We need to show that the map $(\tau:\, L \to L, a_\gp \mapsto a_{\sigma(\gp)})$ is a homomorphism. 
To this end, let $S_K :=\left\{\theta: L \hookrightarrow \Qbar\,\big|\,\theta_{|K} = 1\right\}$.
Then, since $B$ becomes of $\GL_2$-type over $F$, we have 
\begin{align*}
\mathrm{charpoly}(\rho_{A, \lambda'}(\Frob_\gp)) &= \prod_{\theta \in S_K}(x^2 - \theta(a_{\gp}) x + \N\!\gp) =\prod_{i=0}^{g-1}(x^2 - a_{\sigma^i(\gp)} x + \N\!\gp).
\end{align*} 
By Tate's conjecture on endomorphism rings of abelian varieties, the extension $L/K$ is generated by the set $\{a_\gp:\, \gp\,\,\text{prime}\}$ 
(see \cite[Proposition 3.5]{ribet04}). So, the map $\tau$ is indeed a homomorphism. Furthermore, since $\End_{E}(B)\otimes\Q = K$, 
and $B$ doesn't become of $\GL_2$-type over any proper subfield $F'/E$ of $F$, there exists a totally split prime $\gp\subset \CO_F$ 
such that $a_\gp$, and hence $a_{\sigma(\gp)}$, is a primitive element. So, the map $(\phi:\,\Gal(F/E)\to \Aut(L),\,\sigma \mapsto \tau)$ 
is injective. By letting $H = \mathrm{im}(\phi)$, we see that $K = L^H$ and
$\Gal(F/E) \simeq \Gal(L/K)$. This concludes the proof of the theorem. 
\end{proof}

We conclude this section with the following converse statement to Theorem~\ref{thm:av-descent1}.

\begin{thm}\label{thm:av-descent} Let $F/E$ and $L/K$ be cyclic extensions of totally real number fields of
degree $g$. Let $A$ be an abelian variety of dimension $[L:\Q]$ defined over $F$. Suppose that the followings hold:
\begin{enumerate}[1.]
\item $\End_F(A)\otimes \Q = L$;
\item There exists a generator $\tau \in \Gal(L/K) $ such that
\begin{align*}
a_{\sigma(\gp)} = \tau(a_{\gp}),\,\,\text{for all primes}\,\, \gp. 
\end{align*}
\end{enumerate}
Then, there exists an abelian variety $B$ defined over $E$, with $\End_E(B)\otimes\Q = K$,
such that $A \sim B \times_E F$.
\end{thm}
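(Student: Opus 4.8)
The plan is to reconstruct $B$ as an abelian variety "of $\GL_2$-type over $K$" by taking an appropriate Weil restriction of $A$ and cutting out a piece. More precisely, I would start with the Weil restriction $C = \Res_{F/E}(A)$, an abelian variety over $E$ of dimension $g[L:\Q]$, and analyze its $\ell$-adic Galois representation. By the standard description of Weil restriction on Tate modules, $V_\ell(C) \cong \Ind_E^F V_\ell(A)$ as $\Gal(\Qbar/E)$-modules, and after extending scalars to $L_\lambda$ this becomes $\Ind_E^F \rho_{A,\lambda}$, which by hypothesis (2) together with the argument already run in the proof of Theorem~\ref{thm:av-descent1} (Faltings semisimplicity, the inequivalence of the $\sigma^i \cdot \rho_{A,\lambda}$, and Tate's conjecture) is irreducible and \emph{defined over $K$}, i.e. descends to a representation $\rho_{B,\lambda'}: \Gal(\Qbar/E) \to \GL_{2g}(K_{\lambda'})$ for $\lambda' = \lambda \cap K$.

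The key steps, in order, would be: (i) form $C = \Res_{F/E}(A)$ and identify $V_\ell(C) \otimes_{K\otimes\Q_\ell} K_{\lambda'} \cong \Ind_E^F(\rho_{A,\lambda})$, using condition~(2) to see that the $\Gal(F/E)$-twists of $\rho_{A,\lambda}$ are permuted by $\tau$-conjugation; (ii) observe that $K \hookrightarrow \End_E(C)\otimes\Q$ — the copy of $K$ acting on $V_\ell(C)$ through the $K_{\lambda'}$-module structure that is now Galois-stable over $E$ — so that $C$, up to isogeny over $E$, carries a faithful $K$-action making it an abelian variety "of $\GL_2$-type" in the generalized sense, of dimension $g[L:\Q]$; (iii) decompose the $K$-module $V_\ell(C)$: since $\Ind_E^F(\rho_{A,\lambda})$ is absolutely irreducible of dimension $2g$ over $K_{\lambda'}$, and $C$ has $L_\lambda$-rank $2g$ over $L \otimes \Q_\ell$ but only $K_{\lambda'}$-rank $2g$ after the descent, the representation $\rho_{B,\lambda'}$ "is" a $2g$-dimensional $\Gal(\Qbar/E)$-representation that is symplectic (the Weil pairing on $V_\ell(A)$ restricts appropriately to a $K$-bilinear symplectic form on $\Ind_E^F$); hence by Faltings' theorem (the converse direction: a semisimple $\ell$-adic representation occurring in the cohomology of an abelian variety, together with a compatible symplectic structure, is the Tate module of an abelian variety up to isogeny, via the correspondence between $\ell$-adic representations and abelian varieties over a number field cut out by idempotents) there is an abelian variety $B/E$ with $V_\ell(B) \otimes K_{\lambda'} \cong \rho_{B,\lambda'}$ and $\dim B = [L:\Q] = g\cdot\frac{[L:\Q]}{g}$ — wait, one must be careful: $\dim B = \tfrac12 \dim_{\Q_\ell} V_\ell(B)$, and since $\rho_{B,\lambda'}$ has $K_{\lambda'}$-dimension $2g$ while $[K:\Q] = [L:\Q]/g$, we get $\dim_{\Q_\ell} V_\ell(B) = 2g \cdot [K:\Q] = 2[L:\Q]$, so $\dim B = [L:\Q]$, as required; (iv) finally check $\End_E(B)\otimes\Q = K$ (absolute irreducibility of $\rho_{B,\lambda'}$ over $K_{\lambda'}$ forces the endomorphism algebra to be exactly $K$, again by Faltings/Tate) and verify $B \times_E F \sim A$ by comparing $\ell$-adic representations over $F$: $V_\ell(B)|_{\Gal(\Qbar/F)} \otimes L_\lambda \cong (\Ind_E^F \rho_{A,\lambda})|_{\Gal(\Qbar/F)} \cong \bigoplus_{i=0}^{g-1} \sigma^i\cdot\rho_{A,\lambda}$, and picking out the summand $\rho_{A,\lambda}$ (equivalently, using that $\Res_{F/E}A \times_E F \sim \prod \sigma^i(A) \sim \prod A$ up to twist, and that $A$ is a $\Gal(F/E)$-stable isogeny factor) gives $A \sim B \times_E F$ by Faltings' isogeny theorem.

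I would realize step (iii)–(iv) concretely by the "abelian variety of $\GL_2$-type" formalism rather than by a bare Faltings argument: the pair $(C, K \hookrightarrow \End_E(C)\otimes\Q)$ is an abelian variety with multiplication by $K$, and by the structure theory (e.g. as in Ribet's work, \cite{ribet04}) such a $C$ is isogenous over $E$ to $B^{\dim C / (\dim B)}$ for an abelian variety $B$ with $\End_E(B)\otimes\Q \supseteq K$ of the minimal dimension $[K:\Q]\cdot\tfrac{2g}{2\,} \cdot \tfrac{1}{?}$ — concretely, $C \sim B$ already because $\Ind_E^F\rho_{A,\lambda}$ is $K_{\lambda'}$-irreducible, so no multiplicity occurs — with $\End_E(B)\otimes\Q = K$ exactly by irreducibility. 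Then base-changing this $K$-action along $L/K$ and using that it splits $\Ind_E^F\rho_{A,\lambda}$ back into $\bigoplus \sigma^i\cdot\rho_{A,\lambda}$ shows $B\times_E F$ has $A$ as an isogeny factor, and a dimension count ($\dim(B\times_E F) = [L:\Q] = \dim A$) forces $B \times_E F \sim A$.

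The main obstacle I anticipate is step (iii): passing from the abstract $\ell$-adic (really $\lambda'$-adic) Galois representation $\rho_{B,\lambda'}$ — which a priori is only known to exist as a subrepresentation of $V_\ell(C)\otimes K_{\lambda'}$ — to an honest abelian variety $B$ \emph{defined over $E$} with the right endomorphism field and the right dimension. This requires invoking Faltings' theorems (semisimplicity and the isogeny characterization of $\ell$-adic Tate modules among subrepresentations cut out by $E$-rational idempotents in $\End_E(C)\otimes\Q_\ell$) in exactly the way already used for Theorem~\ref{thm:av-descent1}, and being careful that the $K$-action descends the whole isogeny class, not just the $\lambda'$-component — i.e. that the idempotent projecting $V_\ell(C)$ onto the "$B$-part" lies in $\End_E(C)\otimes\Q$, which follows because $K = \End_E(C)\otimes\Q$ (no larger, by $K_{\lambda'}$-irreducibility) is already a field, so $C$ itself is $E$-isogenous to a power of a single simple-up-to-isotypic $B$, and the $K_{\lambda'}$-rank being exactly $2g$ pins down $B = C$ up to isogeny. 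The symplectic/polarization compatibility (ensuring $B$ is an abelian variety and not merely a motive) comes for free from the Weil pairing on $A$ descending to $C$ and being $K$-bilinear, so I do not expect that to be a serious difficulty, but it should be remarked on.
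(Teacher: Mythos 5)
Your overall strategy --- form $X=\Res_{F/E}(A)$, compute its endomorphism algebra via Faltings' theorem on Tate modules, and cut out $B$ as an isogeny factor --- is the same as the paper's, but two steps in your execution have genuine gaps. First, the descent of $W_\lambda=\Ind_E^F(\rho_{A,\lambda})$ to a representation over $K_{\lambda'}$ is \emph{not} automatic from hypothesis (2): rationality of the Frobenius characteristic polynomials over $K$ only determines the character, and there remains a possible Brauer/Schur-index obstruction to realizing the representation over $K_{\lambda'}$. Concretely, $\End_{K_{\lambda'}[\Gal(\Qbar/E)]}$ of the relevant module is a central simple algebra over $K_{\lambda'}$ that becomes $\mathrm{M}_g(L_\lambda)$ after extension to $L_\lambda$, so it could a priori be a nonsplit algebra, in which case the simple factor $B$ of $X$ would have $\End_E(B)\otimes\Q$ a division algebra of dimension $g^2$ over $K$ (and the wrong dimension), not $K$. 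The paper must and does eliminate this case by a global argument: such an algebra would be totally definite yet split by $L$, which is impossible because $L$ is totally real. Your proposal asserts ``irreducible and defined over $K$'' and ``absolute irreducibility forces the endomorphism algebra to be exactly $K$'' without addressing this obstruction, and nothing in your argument replaces it.

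Second, your mechanism for actually producing $B$ is internally inconsistent. You claim $\End_E(C)\otimes\Q=K$ and ``$C\sim B$ already because no multiplicity occurs,'' but $\dim C=g[L:\Q]$ while your own (correct) count in step (iii) gives $\dim B=[L:\Q]$, so $C\not\sim B$ for $g\ge 2$. The multiplicity \emph{does} occur: over $L_\lambda$ one has $\Ind_E^F(\rho_{A,\lambda'})\otimes_{K_{\lambda'}}L_\lambda\cong W_\lambda^{\oplus g}$ since all the inductions $\Ind_E^F(\sigma^i\cdot\rho_{A,\lambda})$ coincide, and (once the division-algebra case is excluded) the correct conclusion is $\End_E(C)\otimes\Q=\mathrm{M}_g(K)$ and $C\sim B^g$, with $B$ a simple factor; this is also why no converse ``Faltings motive-to-abelian-variety'' statement or polarization discussion is needed --- $B$ is cut out by an honest idempotent of $\mathrm{M}_g(K)\subseteq\End_E(C)\otimes\Q$. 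As written, the step that produces an abelian variety $B$ over $E$ with $\End_E(B)\otimes\Q=K$, $\dim B=[L:\Q]$ and $B\times_EF\sim A$ does not close; repairing it amounts to redoing the paper's computation of $\End_E(X)\otimes\Q_\ell$ together with the exclusion of the division-algebra alternative.
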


\begin{proof} Let $X = \Res_{F/E}(A)$, $\ell$ a rational prime, and $V_\ell(X)$ the $\ell$-adic Tate
module of $X$. By \cite[Satz 4]{fal83} and \cite[Section 1]{mil72}, we have
\begin{align*}
\End_E(X) \otimes \Q_\ell &= \End_{\Q_\ell[\Gal(\Qbar/E)]}(V_\ell(X))= 
\End_{\Q_\ell[\Gal(\Qbar/E)]}(\Ind_{E}^F(V_\ell(A)))\\
& = \End_{\Q_\ell[\Gal(\Qbar/E)]}(\Ind_{E}^F(\rho_{A, \ell})) = 
\prod_{\lambda' \mid \ell} \End_{K_{\lambda'}[\Gal(\Qbar/E)]}(\Ind_{E}^F(\rho_{A, \lambda'}))\\
& = \prod_{\lambda' \mid \ell} \End_{K_{\lambda'}}(\Ind_{E}^F(\rho_{A, \lambda'})).
\end{align*}
Let $\lambda', \lambda$ be primes in $K$ and $L$, respectively, with $\lambda \mid \lambda'$. Then, we have
$$\rho_{A, \lambda'}\otimes_{K_{\lambda'}} L_\lambda = \rho_{A, \lambda} \oplus \sigma\cdot \rho_{A,\lambda} 
\oplus \cdots \oplus \sigma^{g-1}\cdot\rho_{A, \lambda}.$$
It follows that, we have  
\begin{align*}
\End_{L_{\lambda}}(\Ind_{E}^F(\rho_{A, \lambda'} \otimes_{K_{\lambda'}}L_\lambda)) 
&= \End_{L_\lambda}\left(\bigoplus_{i=0}^{g-1} \Ind_{E}^{F}(\sigma^i\cdot \rho_{A, \lambda}) \right).
\end{align*}
By construction, we have that
$$W_\lambda := \Ind_{E}^{F}(\rho_{A, \lambda}) = \Ind_{E}^{F}(\sigma\cdot \rho_{A, \lambda})= \cdots = \Ind_{E}^{F}(\sigma^{g-1}\cdot \rho_{A, \lambda}).$$
By Condition (2), $W_\lambda$ is an irreducible $L_\lambda[\Gal(\Qbar/E)]$-module, and the characteristic polynomial of every Frobenius element acting on
it lies in $K[x] \subset K_{\lambda'}[x]$. Therefore, there exists an irreducible 
$K_{\lambda'}[\Gal(\Qbar/E)]$-module $W_{\lambda'}$ such that $W_\lambda = W_{\lambda'}\otimes_{K_{\lambda'}} L_\lambda$, and $W_{\lambda'}$
satisfies one of the following two possibilities:
\begin{enumerate}[(i)]
\item  $\End_{K_{\lambda'}[\Gal(\Qbar/E)]}(W_{\lambda'}) = K_{\lambda'}$; or
\item $\End_{K_{\lambda'}[\Gal(\Qbar/E)]}(W_{\lambda'}) = D'$ for the unique division algebra $D'$ of dimension $g^2$ over $K_{\lambda'}$. 
\end{enumerate}
In case (ii), this would mean that there is a simple factor $B$ of $X$, and a totally definite division algebra $D$ of dimension $g^2$
over $K$ such that $\End_E(B)\otimes\Q \simeq D$, and $\End_{K_{\lambda'}[\Gal(\Qbar/E)]}(W_{\lambda'}) = D_{\lambda'}$, where 
$D' = D_{\lambda'}$ is the completion of $D$ at $\lambda'$. Since
$\End_F(X)\otimes\Q = \mathrm{M}_g(L)$, such a division algebra would be split by $L$ which is totally real. But this is impossible,
so case (ii) cannot happen. Hence, $W_\lambda$ descends to $K_{\lambda'}$, and we have 
\begin{align*}
\End_{L_{\lambda}}(\Ind_{E}^F(\rho_{A, \lambda'} \otimes_{K_{\lambda'}}L_\lambda)) 
&=\End_{L_\lambda}\left( (W_{\lambda'}\otimes_{K_{\lambda'}} L_\lambda)^g\right) = \mathrm{M}_g(L_\lambda).
\end{align*}
From this, we see that 
$$\End_{K_{\lambda'}}(\Ind_{E}^F(\rho_{A, \lambda'})) = \End_{K_{\lambda'}}\left( W_{\lambda'}^g\right) = \mathrm{M}_g(K_{\lambda'}).$$
It follows that 
$$\End_E(X) \otimes \Q_\ell = \prod_{\lambda' \mid \ell }\mathrm{M}_g(K_{\lambda'}).$$
Since this is true for every prime $\ell$, we see that $\End_E(X) \otimes \Q = \mathrm{M}_g(K)$. 

Let $B$ be a simple factor of $X$. Then, one sees that $B \times_E F$ is necessarily simple, and that $B \times_E F \sim A$. 
By construction, we have $\End_E(B)\otimes \Q = K$. 
\end{proof}

\begin{rem}\rm We expect Theorems~\ref{thm:av-descent1} and~\ref{thm:av-descent} to generalise to arbitrary Galois extensions
$F/E$. They show that, when $B/E$ is an abelian variety such that $A = B\times_E F$ acquires extra endomorphisms, then there is a strong interaction
between the group structure of $\Gal(F/E)$ and that of $\Aut_K(L)$ where $K = \End_E(B)\otimes \Q$, and $L = \End_F(A)\otimes \Q$. Theorem~\ref{thm:lifts} below
can be seen as a natural counterpart on the automorphic side which indicates the important of this relation from a functorial point of view. 
The $E$-varieties, which have been extensively studied in \cite{ribet04, pyle04, guitart10}, seem to represent an extreme case where 
this relation is simply absent as the action of the group $\Gal(F/E)$ on $L$ is trivial. (We refer to \cite{demb16} for more details on this.) 
\end{rem}

\section{\bf The Gross conjecture}\label{sec:gross-conj}

The goal of this section is to provide some evidence for the following statement which is a slight generalisation of 
Conjecture~\ref{conj:gross0} in the introduction. (Our statement should also be compared with \cite[Conjecture 1.4]{bk14}). 

\begin{conj}[Gross-Langlands]\label{conj:gross} Let $E$ be a number field, and $B$ be an abelian variety defined over $E$ 
such that $\End_E(B)\otimes \Q = K$ is a totally real field of degree $d$ with $\dim(B) = dg$, for some $g \in \Z_{\ge 1}$. 
Assume that $B$ is not of $\GL_2$-type, i.e., that $g \ge 2$ when $\dim(B) \ge 2$. 
Let $\mathrm{cond}(B)=\gN^d$ be the conductor of $B$. Then there exists a globally generic
cuspidal automorphic representation $\Pi$ on $\GSpin_{2g+1}(\A_E)$ of weight $2$ and paramodular level structure $\gN$, 
with field of rationality $K$, such that 
$$L(B, s) = \prod_{\Pi' \in [\Pi]}L(\Pi', s),$$
where $[\Pi]$ denotes the Hecke orbit of $\Pi$.  
\end{conj}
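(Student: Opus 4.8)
Since the statement is a conjecture, what follows is a strategy that isolates the one genuinely hard input rather than a complete argument; granting that input, the remaining steps are the functorial bookkeeping developed in the excerpt. \emph{Step 1 (the candidate $\GL_{2g}$-representation).} Because $\End_E(B)\otimes\Q=K$ is a field, Faltings semisimplicity forces $B$ to be $E$-simple and, for every rational prime $\ell$ and every prime $\lambda\mid\ell$ of $K$, the $\lambda$-adic Tate module $V_\lambda$ to be an irreducible $K_\lambda[\Gal(\Qbar/E)]$-module of $K_\lambda$-rank $2g$. Since $K$ is totally real, the Rosati involution of a polarisation is trivial on $K$, so the Weil pairing refines to a $K_\lambda$-valued alternating form on $V_\lambda$, giving
\[
\rho_{B,\lambda}\colon\Gal(\Qbar/E)\longrightarrow\GSp_{2g}(K_\lambda)\hookrightarrow\GL_{2g}(\overline{K}_\lambda),\qquad \rho_{B,\lambda}^\vee\simeq\rho_{B,\lambda}\otimes\chi,
\]
with $\chi$ a fixed power of the cyclotomic character; note $\chi_\tau(-1)=(-1)^{2g}=1$, as required of an essentially self-dual datum. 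Having weight-one cohomology, $\rho_{B,\lambda}$ is de Rham at $v\mid\ell$ with Hodge--Tate multiset $\{0^g,1^g\}$ for each embedding $E\hookrightarrow\overline{K}_\lambda$, and the hypothesis $\mathrm{cond}(B)=\gN^d$ with $d=[K:\Q]$ means precisely that the conductor of each $\rho_{B,\lambda}$ is $\gN$.

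\emph{Step 2 (modularity --- the main obstacle).} One must prove that $\{\rho_{B,\lambda}\}_\lambda$ is automorphic: there exists an essentially self-dual cuspidal automorphic representation $(\Pi',\chi)$ of $\GL_{2g}(\A_E)$, of parallel weight $2$ (hence non-regular, with limit-of-discrete-series archimedean components) and conductor $\gN$, with $r_\lambda(\Pi')\simeq\rho_{B,\lambda}$ for all $\lambda$ and full local--global compatibility (the analogue of Theorem~\ref{thm:galois-reps-for-gln}(ii) in the non-regular AESDC setting). Cuspidality of $\Pi'$ reflects irreducibility of $\rho_{B,\lambda}$, and the field of rationality of $\Pi'$ equals $K$ because the local Satake data of $\Pi'$ are the characteristic polynomials of $\rho_{B,\lambda}(\Frob_v)$, whose coefficients generate $K$ over $\Q$ by Faltings. \textbf{This is where the difficulty lies.} For $g=1$ it is Serre's conjecture and modularity lifting (\cite{ser87,kw09}); for $g=2$ it is known in favourable cases (\cite{til06,pil12,pil17}); for $g\ge3$ it is open, the archimedean component being a degenerate limit of discrete series to which no current automorphy technique applies. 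Everything below is conditional on this step.

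\emph{Step 3 (descent to $\GSpin_{2g+1}$; weight and rationality).} Granting $\Pi'$: since $\rho_{B,\lambda}$ preserves an alternating form, $\Pi'$ is of symplectic type, so for a sufficiently large finite set of places $S$ the $L$-series $L^S(s,\wedge^2\Pi'\otimes\chi^{-1})$ has a pole at $s=1$ while $L^S(s,\Sym^2\Pi'\otimes\chi^{-1})$ does not. By Theorem~\ref{thm:descent} there is a globally generic cuspidal $\Pi$ on $\GSpin_{2g+1}(\A_E)$ whose unique --- and, by the remark following Theorem~\ref{thm:descent}, strong --- functorial transfer to $\GL_{2g}(\A_E)$ is $(\Pi',\chi)$. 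Functorial transfer commutes with the $\Aut(\C)$-action and $\Pi$ is the only globally generic member of its transfer class, so $\Pi^\tau\simeq\Pi$ if and only if $(\Pi')^\tau\simeq\Pi'$; hence the field of rationality of $\Pi$ is again $K$. The weight of $\Pi$ is $2$ by Lemma~\ref{lem:weight-of-transfer} and Corollary~\ref{cor:weight-of-transfer} applied with all $k_\tau=2$, the Hodge--Tate multiset $\{0^g,1^g\}$ pinning down the weight of $\Pi'$. Strong local transfer together with the conductor--level dictionary of the paramodular newform theory on $\GSpin_{2g+1}$ (Gross's theory, extended over $E$ as noted in the excerpt) gives $\Pi$ paramodular level $\gN$.

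\emph{Step 4 ($L$-functions).} Finally $V_\ell(B)=\bigoplus_{\lambda\mid\ell}V_\lambda$, and as $\lambda$ ranges over the primes of $K$ --- equivalently, as one twists by $\Hom(K,\C)$ --- the $\rho_{B,\lambda}$ are exactly the Galois representations attached to the members of the Hecke orbit $[\Pi]$; comparing Euler factors at every place of $E$ via local--global compatibility for $\Pi'$ and the strong transfer $\Pi\to\Pi'$ yields
\[
L(B,s)=\prod_{\Pi'\in[\Pi]}L(\Pi',s).
\]
The routine parts are Step 4 and the conductor/level and field-of-rationality bookkeeping in Steps 1 and 3 (local computations, including at the bad places); the genuine obstruction, and the reason the statement remains conjectural for $g\ge3$, is the automorphy of $\{\rho_{B,\lambda}\}$ in Step 2.
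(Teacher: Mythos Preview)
The statement is a conjecture, and the paper does not attempt a proof; what follows Conjecture~\ref{conj:gross} in the paper is only a discussion of Gross's formulation. Your proposal is correctly framed as a strategy and correctly isolates Step~2 --- automorphy of the symplectic $\{\rho_{B,\lambda}\}$ as a cuspidal $\Pi'$ on $\GL_{2g}$ --- as the genuine obstacle, which matches the paper's own verdict that ``the difficulty is in proving that $\Pi$ is automorphic and cuspidal.''

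There is a difference in framing worth recording. The paper, following Gross, constructs the candidate $\Pi$ \emph{directly} on $\SO_{2g+1}$ (hence $\GSpin_{2g+1}$), place by place, using the local Langlands correspondence for $\SO_{2g+1}$ applied to the Weil--Deligne representations coming from the Tate module; the remaining task is then to show that this locally defined admissible $\Pi = \bigotimes_v' \Pi_v$ is globally automorphic and cuspidal. You instead factor through $\GL_{2g}$: first promote $\{\rho_{B,\lambda}\}$ to a cuspidal $\Pi'$ on $\GL_{2g}(\A_E)$, then invoke Theorem~\ref{thm:descent} to descend to $\GSpin_{2g+1}$. Your route has the practical advantage that $\GL_n$ is where the automorphy-lifting technology lives and the descent step is already unconditional; Gross's route is conceptually direct and makes the level and newform predictions transparent, but places the hard step on a group with fewer available tools. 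Both pin the difficulty on the same unavailable input, and neither is currently tractable for $g\ge 3$.

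One minor correction in Step~3: Corollary~\ref{cor:weight-of-transfer} is stated specifically for automorphic inductions from a cyclic extension, so it does not literally apply here. What you actually need is only Lemma~\ref{lem:weight-of-transfer} (the general comparison of weights under $X_*(\widehat T)\hookrightarrow X_*(\widehat{T'})$) together with the direct observation that the Hodge--Tate multiset $\{0^g,1^g\}$ on the $\GL_{2g}$ side corresponds to parallel weight~$2$ on $\GSpin_{2g+1}$. This is cosmetic and does not affect the substance of your outline.
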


Conjecture~\ref{conj:gross} was probably known to experts in some less imprecise form. To the best of our knowledge, however, Gross \cite{gross15} 
was the first to give a refinement which also predicts the level of $\Pi$ in terms of the conductor of $B$. In \cite{gross15}, after fixing an 
isomorphism $\iota:\,\Qbar_\ell \simeq \C$, the Weil-Deligne representations arising from the $\ell$-adic Tate module attached to 
$B$ are normalised so that they  take values in $\Sp_{2g}(\C)$. So, the conjectured automorphic representation $\Pi$ lives on 
$\SO_{2g+1}(\A_\Q)$. Using the local Langlands  correspondence for $\SO_{2g+1}$ (see \cite{js04}), Gross 
provides a precise recipe for the local component  $\Pi_v$ for the representation $$\Pi = {\bigotimes_v}' \Pi_v.$$ 
The difficult is in proving that $\Pi$ is automorphic and cuspidal.

\begin{rem}\rm Although the infinite components of the automorphic representation $\Pi$ are limits of discrete series,
the theory of newforms developed in \cite[\S\S6-8]{gross15} allows for a more classical definition of the field of rationality
$L_\Pi$ of $\Pi$, and it should be equal to $K$. So, we can define the Hecke orbit of $\Pi$
under that assumption. But we expect that one can also give a definition of $L_\Pi$ that is more intrinsic.
\end{rem}

\begin{rem}\rm The restriction to abelian varieties not of $\GL_2$-type in our formulation of Conjecture~\ref{conj:gross} is not 
essentially. We refer to the discussion following \cite[Proposition 6]{gross15} to see how to the conjecture can be
restated in a way that encompasses abelian varieties of $\GL_2$-type.
\end{rem}

\subsection{Compatibility with Galois descent}  In this subsection, we show that Conjecture~\ref{conj:gross}
is compatible with lifts of Hilbert automorphic representations, which correspond to descent of isogeny
classes on the geometric side. 

\begin{thm}\label{thm:lifts} Let $F/E$ be a cyclic extension of totally real fields of degree $g$, and
$\Gal(F/E) = \langle \sigma \rangle$. Let $\gN$ be an integral ideal such that $\gN^\sigma = \gN$.
Let $f$ be a Hilbert newform of parallel weight $2$, level $\gN$ and trivial central character, with field of coefficients $L$, 
which is not a base change from any proper sub-extension $F'/E$ of $F$. Assume that the Hecke orbit of $f$ is preserved by $\Gal(F/E)$,
and that there is an abelian variety $A$ attached to $f$ by the Eichler-Shimura construction. Then, the followings hold:
\begin{enumerate}[(i)]
\item There exist a subfield $K$ of $L$ such that the extension $L/K$ is cyclic Galois of degree $g$; and a generator 
$\tau \in \Gal(L/K) $ such that, for all primes $\gp$, 
\begin{align*}
a_{\sigma(\gp)}(f) = \tau(a_{\gp}(f)). 
\end{align*}
\item The isogeny class of $A$ descends to $E$, i.e. there exists an abelian variety $B$ defined over $E$
such that $\End_E(B)\otimes \Q \simeq K$ and $A \sim B\times_E F $; 
\item The automorphic representation $\pi$ on $\GL_2(\A_F)$ attached to $f$ lifts to a globally generic 
cuspidal automorphic representation $\Pi$ on $\GSpin_{2g+1}(\A_E)$, with field of rationality $K$, such that 
$$L(B,s) = \prod_{\Pi' \in[\Pi]} L(\Pi', s).$$
\end{enumerate}
\end{thm}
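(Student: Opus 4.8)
The plan is to assemble the three assertions by combining the results already established in the paper, using the Eichler--Shimura abelian variety $A/F$ as the geometric bridge between the Hilbert newform $f$ and its lift. First I would pass from $f$ to the associated cuspidal automorphic representation $\pi$ of $\GL_2(\A_F)$: it has parallel weight $2$, level $\gN$, trivial central character $\omega = 1$, and by hypothesis is not a base change from any proper sub-extension $F'/E$, while its Hecke orbit is $\Gal(F/E)$-stable. By Shimura's theorem the field of rationality $L_\pi$ equals the coefficient field $L = \Q(\{a_\gp(f)\})$. Since $\pi$ is RAESDC (with $\chi = 1$, using $\omega = 1$; note that trivial weight-$2$ parallel-weight forms are regular as $\GL_2$-representations), Lemma~\ref{lem:fld-of-rationality} applies to $\Pi' = \AI_{E/F}(\pi)$: its field of rationality $L_{\Pi'}$ is a subfield of $L$, and because $[\pi] = [{}^\sigma\!\pi]$, the extension $L/L_{\Pi'}$ is cyclic of degree $g$. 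Setting $K := L_{\Pi'}$ gives the subfield in (i). The explicit generator $\tau \in \Gal(L/K)$ with $a_{\sigma(\gp)}(f) = \tau(a_\gp(f))$ for all primes $\gp$ then comes from Theorem~\ref{thm:av-descent1} applied to $A$ (with $E$-model yet to be produced) --- or, more directly, from the proof of Lemma~\ref{lem:fld-of-rationality}, which identifies the automorphism of $L_\pi$ induced by $\sigma$ and matches $\sigma$-conjugation of Hecke eigenvalues with the action of $\tau$ on $L$; compatibility of $r_\lambda(\pi)$ with $\sigma$ and the local Langlands correspondence (Theorem~\ref{thm:galois-reps-for-hmf}) translates this into the stated identity on Frobenius traces.

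Next I would establish (ii). The abelian variety $A/F$ attached to $f$ has $\End_F(A)\otimes\Q = L$ and dimension $[L:\Q]$, so it is of $\GL_2$-type over $F$; by the running hypothesis it does not become of $\GL_2$-type over any proper sub-extension $F'/E$. Now invoke Theorem~\ref{thm:av-descent}: its hypotheses are exactly Condition (1), $\End_F(A)\otimes\Q = L$, and Condition (2), the existence of $\tau\in\Gal(L/K)$ with $a_{\sigma(\gp)} = \tau(a_\gp)$, both of which we just produced in part (i). The conclusion yields an abelian variety $B/E$ with $\End_E(B)\otimes\Q = K$ and $A \sim B\times_E F$. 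This is the geometric descent statement.

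Finally, for (iii): since $\omega = 1$ is trivial, it certainly factors through the norm map $\mathrm{N}_{F/E}$, so Theorem~\ref{thm:theta-lifts} applies and $\Pi' = \AI_{E/F}(\pi)$ descends to a globally generic cuspidal automorphic representation $\Pi$ of $\GSpin_{2g+1}(\A_E)$, whose functorial transfer to $\GL_{2g}(\A_E)$ is $\Pi'$. Its field of rationality: by Theorem~\ref{thm:descent} the transfer is strong and $\Pi'$ determines $\Pi$ uniquely, so $L_\Pi = L_{\Pi'} = K$ by Lemma~\ref{lem:fld-of-rationality}. For the $L$-function identity, I would compare Euler factors place by place. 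On the automorphic side, the defining property $\BC_{F/E}(\Pi') = \pi \boxplus \pi^\sigma \boxplus \cdots \boxplus \pi^{\sigma^{g-1}}$ together with Theorem~\ref{thm:auto-induction}(iii) gives $L(\Pi', s) = L(\pi, s)$ (as $\AI$ preserves $L$-functions), and summing over the Hecke orbit, $\prod_{\Pi'\in[\Pi]} L(\Pi', s) = \prod_{\pi^\tau \in [\pi]} L(\pi^\tau, s)$. On the geometric side, $L(B,s) = L(\rho_{B,\lambda}, s)$ and the computation in the proof of Theorem~\ref{thm:av-descent} shows $\rho_{B,\lambda'}\otimes_{K_{\lambda'}} L_\lambda = \Ind_E^F(\rho_{A,\lambda})$, whence $L(B,s) = \prod_{\lambda\mid\lambda'} L(\rho_{A,\lambda}, s) = \prod_{\tau} L(\rho_{A,\lambda}^{\tau}, s)$; matching $\rho_{A,\lambda}$ with $r_\lambda(\pi)$ via Theorem~\ref{thm:galois-reps-for-hmf} and Eichler--Shimura, and using that transfer is strong (so the match holds at \emph{all} finite places, not just unramified ones, via Theorem~\ref{thm:auto-induction}(iii) and the archimedean local-global compatibility of \S\ref{subsec:lg-comp}), identifies the two products. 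The main obstacle is the bookkeeping at the bad places and at infinity: one must be careful that the conductor-to-level dictionary ($\gN' = \mathrm{N}_{F/E}(\gN)\mathfrak{D}_{F/E}^g$) and the archimedean components are consistent across the base-change / automorphic-induction / descent chain so that the Euler factors of $L(B,s)$ and $\prod_{\Pi'} L(\Pi',s)$ genuinely agree factor by factor, rather than merely up to finitely many places; this is where the strength (as opposed to weakness) of the transfer in Theorem~\ref{thm:descent}, together with Theorem~\ref{thm:auto-induction}(iii), does the essential work.
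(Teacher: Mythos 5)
Your proposal is correct and follows essentially the same route as the paper: (i) via the Hecke-orbit argument underlying Lemma~\ref{lem:fld-of-rationality} (with $K$ the field of rationality of $\AI_{E/F}(\pi)$ and $\tau$ a generator because $\pi$ is not a base change), (ii) by feeding the resulting eigenvalue identity into Theorem~\ref{thm:av-descent}, and (iii) by Theorem~\ref{thm:theta-lifts} (trivial $\omega$ factors through the norm), Lemma~\ref{lem:fld-of-rationality} for the field of rationality, and the strong transfer/local-global compatibility together with $L(A,s)=\prod_{\pi'\in[\pi]}L(\pi',s)$ to match Euler factors. Your extra detail on the induced Galois module $\Ind_E^F(\rho_{A,\lambda})$ and the bad/archimedean places only elaborates what the paper compresses into its citation of Theorem~\ref{thm:auto-induction} and \cite[Proposition 5.1]{as14}.
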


\begin{proof} (i) Let $\pi$ be the automorphic representation attached to $f$, and let ${}^\sigma\!\pi = \pi \circ \sigma$.
Then  for $\gp$ prime, $a_{\sigma(\gp)}(f)$ is the Hecke eigenvalue at $\gp$ of the newform ${}^\sigma\!f \in {}^\sigma\!\pi$.
The newforms $f$ and ${}^\sigma\!f$ are in the same Hecke orbit if and only if ${}^\sigma\!\pi$ and $\pi$ are in 
the same Hecke orbit. This is equivalent to saying that there exists $\tau \in \Gal(L/K)$ non-trivial such that ${}^\sigma\!\pi = \pi^\tau$, 
which translates into the stated identity. Furthermore, since $f$ is not a base change from any intermediate field, $\tau$ must be
a generator. 

(ii) The identity (i) is the descent condition on the isogeny class of $A$ given in Theorem~\ref{thm:av-descent}.
So, the isogeny class of $A$ descends to that of an abelian variety $B$ over $E$.

(iii) Since $\pi$ is not a base change from any proper sub-extension $F'/E$ of $F$, we deduce from Theorem~\ref{thm:theta-lifts} 
that $\pi$ admits a lift $\Pi$ to $\GSpin_{2g + 1}(\A_E)$. By Lemma~\ref{lem:fld-of-rationality}, the field of rationality of $\Pi$ is $K$. 
By construction, we have $$L(A, s) = \prod_{\pi' \in [\pi]}L(\pi', s).$$
So, it follows from the functorial properties of Theorem~\ref{thm:auto-induction}, and the local-global compatibility result stated 
in \cite[Proposition 5.1]{as14}, that 
$$L(B, s) = \prod_{\Pi' \in [\Pi]} L(\Pi', s).$$
\end{proof}

\begin{rem}\label{rem:lifts}\rm In Theorem~\ref{thm:lifts}, the condition that $\pi$ is not a base change from
any proper sub-extension $F'/E$ of $F$ can be relaxed as follows. Let $H = \mathrm{Stab}_{G}([\pi])$,
where $G = \Gal(F/E)$, and $F' = F^{H}$, $g' = [F:F']$. Then, as before, we have an injection
$\phi: H \hookrightarrow \Aut(L)$. Letting $K' = L^{\phi(H)}$, Theorem~\ref{thm:lifts} implies that $\pi$
lifts to an automorphic representation $\Pi$ on $\GSpin_{2g'+1}(\A_{F'})$, and that $A$ descends to
an abelian variety $B$ over $F'$ such that $\End_{F'}(B)\otimes \Q = K'$ and 
$$L(B, s) = \prod_{\Pi' \in [\Pi]}L(\Pi', s).$$ 
\end{rem}

\subsection{Automorphy of abelian varieties acquiring extra endomorphisms}
We now prove a converse statement to Theorem~\ref{thm:lifts}. We refer to \cite{bdj10} for the Serre conjecture for totally real number fields.

\begin{thm}\label{thm:gross-evidence1} Let $F/E$ and $L/K$ be extensions of totally real number fields of degree $g$. 
Let $B$ be an abelian variety of dimension $[L:\Q]$ defined over $E$. Suppose that the followings hold:
\begin{enumerate}[1.]
\item $F$ is a cyclic extension of $E$;
\item $\End_{E}(B)\otimes \Q = K$;
\item $\End_F(A)\otimes \Q = L$, where $A = B \times_{E} F$, but $B$ doesn't become of $\GL_2$-type over any proper subfield $F'/E$ of $F$;
\item There is a rational prime $\ell$, and a prime $\lambda' \mid \ell$ in $K$ which is totally ramified in $L$ such that $\bar{\rho}_{A,\lambda}$ is surjective for
the unique prime $\lambda\mid \lambda'$ in $L$. If $\ell = 2$, then further assume that $F(A[\lambda])$ is non-solvable.
\end{enumerate}
If the Serre conjecture for totally real number fields is true for $E$, then $B$ is automorphic. More specifically, there exists a cuspidal Hilbert automorphic representation $\pi$
on $\GL_2(\A_F)$, which lifts to a globally generic cuspidal automorphic representation $\Pi$ on $\GSpin_{2g+1}(\A_E)$ such that
$$L(B, s) = \prod_{\Pi' \in [\Pi]} L(\Pi', s).$$
\end{thm}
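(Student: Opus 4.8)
The plan is to reverse the construction of Theorem~\ref{thm:lifts}: starting from $B/E$ with the stated properties, we first produce the abelian variety $A = B\times_E F$ over $F$, show it is modular over $F$ by a Serre-type argument, and then feed the resulting Hilbert newform into Theorem~\ref{thm:lifts}. First I would record the consequences of the hypotheses that are purely on the geometric side. By Theorem~\ref{thm:av-descent1} (applied with these exact hypotheses (1)--(3)), the extension $L/K$ is cyclic Galois of degree $g$, and there is a generator $\tau\in\Gal(L/K)$ with $a_{\sigma(\gp)}=\tau(a_\gp)$ for all primes $\gp$ of $F$; moreover $A$ is of $\GL_2$-type over $F$ with $\End_F(A)\otimes\Q=L$. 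In particular the $\lambda$-adic representations $\rho_{A,\lambda}\colon \Gal(\Qbar/F)\to\GL_2(L_\lambda)$ of Section~\ref{sec:descent-ab-av} make sense, and condition (4) gives us a prime $\lambda\mid\ell$ in $L$ (totally ramified over $\lambda'$ in $K$) for which the residual representation $\bar\rho_{A,\lambda}\colon\Gal(\Qbar/F)\to\GL_2(\F_\ell)$ is surjective, with the extra non-solvability hypothesis when $\ell=2$.

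Next I would apply the Serre conjecture over the totally real field $F$ (which follows from its truth over $E$, since one may base-change a Serre-modularity input along $F/E$, or simply invoke \cite{bdj10} for $F$ directly once it is known for $E$; the cyclic extension $F/E$ is exactly the setting where solvable descent of automorphy is available). The residual representation $\bar\rho_{A,\lambda}$ is odd (complex conjugations have determinant $-1$ by the Weil pairing on the Tate module), totally odd, irreducible (indeed surjective), and has Serre weight $2$ and level prime to $\ell$ coming from the conductor of $A$. Hence there is a Hilbert modular eigenform $f$ over $F$, of parallel weight $2$ and the appropriate level $\gN$, with $\bar\rho_{f,\lambda}\simeq\bar\rho_{A,\lambda}$. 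A modularity lifting theorem in the Fontaine--Laffaille / ordinary range — this is where the surjectivity of $\bar\rho_{A,\lambda}$ and the totally-real hypothesis are used, via the results of Kisin and their extensions, e.g. \cite{kt16, tho17} — then upgrades this to $\rho_{A,\lambda}\simeq\rho_{f,\lambda}$, so that $A$ is modular over $F$ and $L(A,s)=\prod_{\pi'\in[\pi]}L(\pi',s)$ where $\pi$ is the automorphic representation of $\GL_2(\A_F)$ attached to $f$. One must also check that $f$ is not a base change from any proper sub-extension $F'/E$ of $F$: this is forced by hypothesis (3), since a base-change form would make $B$ of $\GL_2$-type already over $F'$ (equivalently, the $\Gal(F/E)$-orbit of $\pi$ has trivial stabiliser because $a_{\sigma(\gp)}=\tau(a_\gp)$ with $\tau$ a generator of $\Gal(L/K)$).

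Finally I would invoke Theorem~\ref{thm:lifts} applied to $f$: its hypotheses are now all verified — $\gN^\sigma=\gN$ since the conductor of $A=B\times_EF$ is $\Gal(F/E)$-stable (it descends to $\mathrm{cond}(B)$), the Hecke orbit of $f$ is $\Gal(F/E)$-stable (again from $a_{\sigma(\gp)}=\tau(a_\gp)$), $f$ is not a base change, and the abelian variety attached to $f$ by Eichler--Shimura is $A$ (or isogenous to it) since $\rho_{f,\lambda}\simeq\rho_{A,\lambda}$ and $A$ is of $\GL_2$-type with $\End_F(A)\otimes\Q=L$ equal to the coefficient field of $f$. Theorem~\ref{thm:lifts} then yields the globally generic cuspidal lift $\Pi$ on $\GSpin_{2g+1}(\A_E)$, with field of rationality $K$, and an abelian variety $B'/E$ with $B'\times_EF\sim A$ and $L(B',s)=\prod_{\Pi'\in[\Pi]}L(\Pi',s)$; since $B$ itself already satisfies $B\times_EF\sim A$ and $\End_E(B)\otimes\Q=K$, one checks $B\sim B'$ (both are simple factors of $\Res_{F/E}A$ cut out by the same $K$-action, as in the proof of Theorem~\ref{thm:av-descent}), so $L(B,s)=\prod_{\Pi'\in[\Pi]}L(\Pi',s)$.

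\emph{Main obstacle.} The crux is the residual modularity + modularity lifting step over $F$: one needs the Serre conjecture over totally real fields to supply the residual form of the right weight and level, and a modularity lifting theorem whose running hypotheses are met here only because of the somewhat delicate condition (4) — surjectivity of $\bar\rho_{A,\lambda}$ at a prime totally ramified in $L/K$, plus the non-solvability proviso at $\ell=2$. The total ramification is what makes $\bar\rho_{A,\lambda}$ genuinely $2$-dimensional over $\F_\ell$ rather than living in a proper subfield, which is precisely what is needed to apply the big-image modularity lifting theorems of \cite{kt16, tho17}; relaxing it (as the authors remark for Theorem~\ref{thm:prime-degree0}) would require assembling several different lifting theorems case by case.
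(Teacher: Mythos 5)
There is a genuine gap at the residual modularity step, and it concerns precisely the hypothesis whose role you misidentify. You propose to ``apply the Serre conjecture over the totally real field $F$ (which follows from its truth over $E$, since one may base-change a Serre-modularity input along $F/E$ \dots)''. This implication is false: Serre's conjecture over $E$ only concerns residual representations of $\Gal(\Qbar/E)$, and neither cyclic base change nor solvable descent lets you deduce modularity of an arbitrary mod~$\ell$ representation of $\Gal(\Qbar/F)$ from it. This matters concretely for the intended application (Corollary~\ref{cor:gross-evidence1} over $\Q$): Serre's conjecture is a theorem over $\Q$ but wide open over a general totally real cubic or higher-degree field $F$, so your route would leave the corollary conditional on an open conjecture.

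The paper's proof avoids this by using condition (4) in a way you do not: since $\lambda'$ is \emph{totally} ramified in $L$, the residue fields of $\lambda$ and $\lambda'$ coincide and the generator $\tau\in\Gal(L/K)$ from Theorem~\ref{thm:av-descent1} acts trivially modulo $\lambda$, so $a_{\sigma(\gp)}\equiv a_{\gp}\pmod{\lambda}$ for all $\gp$. Combined with the surjectivity of $\bar{\rho}_{A,\lambda}$ (which forces $\bar{\rho}_{A,\lambda}=\bar{\rho}_{A,\lambda}^{ss}$ and absolute irreducibility), this shows that $\bar{\rho}_{A,\lambda}$ is itself a \emph{base change from $E$}. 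Hence the Serre conjecture over $E$ --- exactly as assumed in the statement --- gives residual modularity, and the lifting theorems of \cite{kt16} (for $\ell\ge 3$) and \cite{tho17} (for $\ell=2$, using the non-solvability of $F(A[\lambda])$) then give modularity of $\rho_{A,\lambda}$ over $F$; from there your final step (feeding $\pi$ into Theorem~\ref{thm:lifts}) is the same as the paper's. Your closing remark that the total ramification serves to make $\bar{\rho}_{A,\lambda}$ ``genuinely $2$-dimensional over $\F_\ell$'' for big-image lifting theorems is therefore a misreading: its actual purpose is to force the residual eigenvalue system to be $\Gal(F/E)$-invariant so that only Serre's conjecture over $E$ is needed. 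The rest of your outline (the use of Theorem~\ref{thm:av-descent1}, the non-base-change verification via hypothesis (3), and the identification of $B$ with the descended variety) is consistent with the paper.
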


\begin{proof} Keeping the same notations as in Theorem~\ref{thm:av-descent1}, we have the commutative diagram  
\begin{eqnarray*}
\begin{tikzcd}
\Gal(\Qbar/F) \arrow[rightarrow]{r}{\rho_{A, \lambda'}}\arrow[rightarrow, "\mathbf{1}\ltimes\rho_{A, \lambda}"']{dr} & \GSp_{2g}(K_{\lambda'})\\
&\GL_2(L_{\lambda})^g\arrow[rightarrow]{u}\\
\end{tikzcd}
\end{eqnarray*}
where $\mathbf{1}\ltimes\rho_{A, \lambda}:\, \Gal(\Qbar/F)\to \GL_2(L_{\lambda})^g$ is given by
$$(\mathbf{1}\ltimes \rho_{A, \lambda})(\Frob_{\gp}) := (\rho_{A, \lambda}(\Frob_{\gp}), \ldots, \rho_{A, \lambda}(\Frob_{\sigma^{g-1}(\gp)}) ).$$
We recall that, by Theorem~\ref{thm:av-descent1}, there exists a generator $\tau \in \Gal(L/K)$ such that
$$ a_{\sigma(\gp)}  = \tau(a_{\gp}),$$
for all primes $\gp \subset \CO_F$.
Let $\bar{\rho}_{A, \lambda}$ be the reduction of $\rho_{A, \lambda}$ modulo $\lambda$,
and $\bar{\rho}_{A, \lambda}^{ss}$ its semi-simplification. Since $\lambda$ is totally ramified in $L$, we see that
$$a_{\sigma(\gp)}  = a_{\gp}\,\bmod \lambda,$$
for all primes $\gp \subset \CO_F$. So, $\bar{\rho}_{A, \lambda}^{ss}:\, \Gal(\Qbar/F) \to \GL_2(\F_\lambda)$ is a base change from $E$. 
By assumption $\bar{\rho}_{A, \lambda}$ is surjective. So it is absolutely irreducible, hence $\bar{\rho}_{A, \lambda} = \bar{\rho}_{A, \lambda}^{ss}$.
Since we assume that the Serre conjecture for totally real number fields is true for $E$ (see \cite{bdj10}), $\bar{\rho}_{A, \lambda}$ is modular. 
If $\ell \ge 3$, then $\rho_{A, \lambda}$ is modular by \cite[Theorem 1.1]{kt16}. If $\ell = 2$, then $\rho_{A, \lambda}$ is also modular by \cite[Theorem 6.1]{tho17}. 
So, in either case, there exists a Hilbert automorphic representation $\pi$ associated to $A$. 
The abelian varieties $A$ and $B$, and the automorphic representation $\pi$ satisfy the conditions of Theorem~\ref{thm:lifts}. So,
there exists $\Pi$ on $\GSpin_{2g + 1}(\A_E)$ such that 
$$L(B, s) = \prod_{\Pi' \in [\Pi]}L(\Pi', s).$$
\end{proof}

\begin{rem}\rm First, we note that the assumption that the Serre conjecture for totally real fields holds for $E$ in Theorem~\ref{thm:gross-evidence1}
is more than what we need. It suffices that $\bar{\rho}_{A,\lambda}$ be modular.

Second, the conditions in Theorem~\ref{thm:gross-evidence1} are clearly not optimal. Indeed, we assume that the residual 
representation $\bar{\rho}_{A, \lambda}$ is surjective. But, \cite[Theorem 1.1]{kt16} only requires the restriction $\bar{\rho}_{A, \lambda}|_{G_{F(\zeta_\ell)}}$ 
to be irreducible. One could also allow $\bar{\rho}_{A, \lambda}$ to be reducible and use \cite[Theorem 7.1]{tho15} or  \cite[Theorem A]{sw99}. 
In other words, Theorem~\ref{thm:gross-evidence1} can be easily extended to a wider class of abelian varieties by combining some of the most recent 
modularity lifting results, see for example \cite{gee06, gee09, kis09, kt16, sw99, tho15, tho17}. 
\end{rem}

\begin{rem}\rm  In light of recent works of Pilloni \cite{pil17} and Calegari-Geraghty \cite{cg16}, one might
also be able to extend our approach to other groups such as $\GSp_4$ to produce further evidence 
to Conjecture~\ref{conj:gross} via functoriality. 
\end{rem}

\begin{cor}\label{cor:gross-evidence1} Let $F/\Q$ and $L/K$ be extensions of totally real number fields of degree $g$. 
Let $B$ be an abelian variety of dimension $[L:\Q]$ defined over $\Q$. Suppose that the followings hold:
\begin{enumerate}[1.]
\item $F$ is cyclic;
\item $\End_\Q(B)\otimes \Q = K$;
\item $\End_F(A)\otimes \Q = L$, where $A = B \times_\Q F$, but $B$ doesn't become of $\GL_2$-type over any proper subfield $F'/E$ of $F$;
\item There is a rational prime $\ell$, and a prime $\lambda' \mid \ell$ in $K$ which is totally ramified in $L$ such that $\bar{\rho}_{A,\lambda}$ is surjective for
the unique prime $\lambda\mid \lambda'$ in $L$. If $\ell = 2$, then further assume that $F(A[\lambda])$ is non-solvable.
\end{enumerate}
Then $B$ is automorphic, i.e., there exists a cuspidal Hilbert automorphic representation $\pi$
on $\GL_2(\A_F)$, which lifts to a globally generic cuspidal automorphic representation $\Pi$ on $\GSpin_{2g + 1}(\A_\Q)$, such that
$$L(B, s) = \prod_{\Pi' \in [\Pi]} L(\Pi', s).$$
\end{cor}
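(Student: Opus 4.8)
The plan is to obtain the corollary as the special case $E = \Q$ of Theorem~\ref{thm:gross-evidence1}. Conditions (1)--(4) of the corollary are literally conditions (1)--(4) of that theorem with $E = \Q$ (in condition (3), the field written $E$ should be read as $\Q$), so the only thing that needs checking is the single standing hypothesis of Theorem~\ref{thm:gross-evidence1}, namely that the Serre conjecture for totally real number fields holds for the base field $E$. For $E = \Q$ this is exactly Serre's original modularity conjecture \cite{ser87}, proved by Khare--Wintenberger \cite{kw09}; hence Theorem~\ref{thm:gross-evidence1} applies with no conditional input and gives precisely the assertion of the corollary.

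For completeness I would unwind the mechanism. Since $\lambda'$ is totally ramified in $L$, the identity $a_{\sigma(\gp)} = \tau(a_{\gp})$ of Theorem~\ref{thm:av-descent1} reduces mod $\lambda$ to $a_{\sigma(\gp)} \equiv a_{\gp}$, so $\bar{\rho}_{A,\lambda}^{ss}$ is a base change from $\Q$; surjectivity of $\bar{\rho}_{A,\lambda}$ makes it irreducible, hence equal to $\bar{\rho}_{A,\lambda}^{ss}$. Serre's conjecture over $\Q$ then shows that $\bar{\rho}_{A,\lambda}$ is modular, and the modularity lifting theorems of \cite{kt16} (for $\ell \ge 3$) or of \cite{tho17} (for $\ell = 2$, where the non-solvability of $F(A[\lambda])$ is used) promote this to modularity of $\rho_{A,\lambda}$ itself, yielding the cuspidal Hilbert automorphic representation $\pi$ on $\GL_2(\A_F)$ attached to $A$. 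One then checks that $A$, $B$ and $\pi$ meet the hypotheses of Theorem~\ref{thm:lifts} --- the descent of the isogeny class of $A$ to an abelian variety $B$ over $\Q$ being supplied by the results of Section~\ref{sec:descent-ab-av} --- and Theorem~\ref{thm:lifts} produces the globally generic cuspidal lift $\Pi$ on $\GSpin_{2g+1}(\A_\Q)$ together with $L(B,s) = \prod_{\Pi' \in [\Pi]} L(\Pi', s)$.

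I do not expect any real obstacle: the content is entirely in the cited black boxes, and the corollary is just the remark that the conditional hypothesis of Theorem~\ref{thm:gross-evidence1} is unconditionally available over $\Q$. The only mild subtleties --- that the Buzzard--Diamond--Jarvis formulation of Serre's conjecture specialises over $\Q$ to the classical statement, and that the hypotheses of \cite{kt16} and \cite{tho17} are met under condition (4) --- are exactly the points already treated inside the proof of Theorem~\ref{thm:gross-evidence1}, so no new argument is required.
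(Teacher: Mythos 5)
Your proposal is correct and matches the paper's own proof, which simply observes that for $E=\Q$ the Serre conjecture is known by \cite{ser87, kw09} and then invokes Theorem~\ref{thm:gross-evidence1}. Your additional unwinding of the mechanism (reduction mod $\lambda$, modularity lifting via \cite{kt16, tho17}, descent and Theorem~\ref{thm:lifts}) merely restates what is already inside the proof of that theorem, so no new argument is needed.
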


In particular, we have the following corollary for abelian varieties of prime dimension. 

\begin{cor}\label{cor:prime-degree} Let $B$ be an abelian variety defined over $\Q$ such that $\End_\Q(B)\otimes \Q = K$
is a totally real field. Suppose that there exist totally real extensions $F/\Q$ and $L/K$ of prime degree $g$ such that 
\begin{enumerate}[1.]
\item $F$ is cyclic;
\item $[L:\Q] = \dim(B)$ and $\End_F(A)\otimes \Q = L$, where $A = B \times_{\Q} F$;
\item There is a rational prime $\ell$, and a prime $\lambda'\mid \ell$ in $K$ which is ramified in $L$ such that $\bar{\rho}_{A,\lambda}$ is surjective for
the unique prime $\lambda\mid \lambda'$ in $L$. If $\ell = 2$, then further assume that $F(A[\lambda])$ is non-solvable.
\end{enumerate}
Then, $B$ is automorphic, i.e. there exists a cuspidal Hilbert automorphic representation $\pi$ on $\GL_2(\A_F)$,
which lifts to a globally generic cuspidal automorphic representation $\Pi$ on $\GSpin_{2g+1}(\A_\Q)$,  such that 
$$L(B, s) = \prod_{\Pi' \in [\Pi]}L(\Pi', s).$$
\end{cor}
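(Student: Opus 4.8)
The plan is to derive Corollary~\ref{cor:prime-degree} as a direct specialisation of Corollary~\ref{cor:gross-evidence1}, the only real work being to check that the hypotheses of the latter follow from those of the former when $g$ is prime. First I would observe that the set-up is identical except that Corollary~\ref{cor:gross-evidence1} also asks that $B$ not become of $\GL_2$-type over any proper subfield $F'/\Q$ of $F$, and that the ramified prime $\lambda'$ be \emph{totally} ramified in $L$. Both of these are automatic here. Indeed, since $g = [F:\Q]$ is prime, the only proper subfield of $F$ is $\Q$ itself, and $B$ is not of $\GL_2$-type over $\Q$ because $\End_\Q(B)\otimes\Q = K$ has $[K:\Q] = [L:\Q]/g = \dim(B)/g < \dim(B)$ (using Condition~2, $[L:\Q]=\dim(B)$, together with $[L:K]=g$, which forces $[K:\Q]=\dim(B)/g$). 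So Condition~3 of Corollary~\ref{cor:gross-evidence1} holds. Likewise, $[L:K] = g$ is prime, so any prime $\lambda'$ of $K$ that ramifies in $L$ is necessarily totally ramified; thus Condition~3 of Corollary~\ref{cor:prime-degree} upgrades to Condition~4 of Corollary~\ref{cor:gross-evidence1} verbatim.

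Next I would record why $L/K$ is in fact a cyclic Galois extension of degree $g$, which is implicitly needed for the conclusion to make sense in the form stated: this is exactly the content of Theorem~\ref{thm:av-descent1}, whose hypotheses we have just verified (Conditions 1--3 there are Conditions 1--2 here plus the two automatic facts above). Theorem~\ref{thm:av-descent1} then gives that $L/K$ is cyclic of degree $g$ and produces a generator $\tau\in\Gal(L/K)$ with $a_{\sigma(\gp)} = \tau(a_\gp)$ for all primes $\gp$ of $F$. In particular the $A$, $B$, $L$, $K$ of our statement feed correctly into Corollary~\ref{cor:gross-evidence1} (equivalently into Theorems~\ref{thm:gross-evidence1} and~\ref{thm:lifts}).

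Having matched the hypotheses, I would simply invoke Corollary~\ref{cor:gross-evidence1}: it yields a cuspidal Hilbert automorphic representation $\pi$ on $\GL_2(\A_F)$ attached to $A$ (via the Serre conjecture over $\Q$, \cite{ser87, kw09}, and the modularity lifting results \cite{kt16, tho17}), which then lifts by Theorem~\ref{thm:theta-lifts} to a globally generic cuspidal automorphic representation $\Pi$ on $\GSpin_{2g+1}(\A_\Q)$ with $L(B,s) = \prod_{\Pi'\in[\Pi]} L(\Pi',s)$. The only point deserving a remark is that Corollary~\ref{cor:gross-evidence1} is stated for general $E$ with the Serre conjecture over $E$ as a hypothesis, whereas here $E = \Q$, where the Serre conjecture is a theorem; so no conjectural input remains, which is precisely the reason the conclusion can be stated unconditionally.

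The main obstacle is really only bookkeeping: one must be careful that the surjectivity of $\bar\rho_{A,\lambda}$ and (if $\ell = 2$) the non-solvability of $F(A[\lambda])$ are passed along unchanged, and that the dimension count $\dim(B) = dg$ with $d = [K:\Q]$ (so $d = \dim(B)/g$ and $[L:\Q] = dg = \dim(B)$) is consistent with Condition~2. There is no genuinely new mathematical content beyond realising that prime degree makes the ``not of $\GL_2$-type over a proper subfield'' and ``totally ramified'' conditions free; the rest is a citation of Corollary~\ref{cor:gross-evidence1}.
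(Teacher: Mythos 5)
Your proposal is correct and follows the paper's own route: the paper obtains this corollary ``in particular'' from Corollary~\ref{cor:gross-evidence1} (whose proof is simply that the Serre conjecture holds over $\Q$, so Theorem~\ref{thm:gross-evidence1} applies), and your verification that prime degree makes the ``not of $\GL_2$-type over a proper subfield'' and ``totally ramified'' hypotheses automatic is exactly the bookkeeping the paper leaves implicit. One small ordering point: state the ``ramified implies totally ramified'' step after (or together with) the cyclicity of $L/K$ coming from Theorem~\ref{thm:av-descent1}, since for a non-Galois extension of prime degree a ramified prime need not be totally ramified.
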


\begin{proof}[Proof of Corollary~\ref{cor:gross-evidence1}] Since $E = \Q$, the Serre conjecture is true \cite{ser87, kw09}. 
So we conclude by applying Theorem~\ref{thm:gross-evidence1}.
\end{proof}

\begin{rem}\rm There are two separate aspects to Corollary~\ref{cor:gross-evidence1} (and in fact Theorem~\ref{thm:gross-evidence1}).
The first one is concerned with potential modularity of abelian varieties which acquire extra endomorphisms after base change, 
while the second is about functoriality. If one is only interested in potential modularity, then the assumption that the extension $F/\Q$ be
cyclic is not strictly necessary. Indeed, there are results of Dieulefait on modularity of residual Galois representations that are non-solvable base change
(see for example \cite[\S 6]{die12}, and also \cite{hid09}).
\end{rem}

One easily deduces the following uniformisation result from Corollary~\ref{cor:gross-evidence1}. 

\begin{cor}\label{cor:prime-degree2} Keeping the hypotheses of Corollary~\ref{cor:gross-evidence1}, assume that
$F$ has narrow class number one. Let $\mathrm{cond}(A) = \gN^g$ be the conductor of the abelian variety $A$. If $g$ is even, assume that 
there exists a prime $v \mid \gN$ such that $\pi_v$, the local component of the automorphic representation $\pi$ at $v$, is a discrete series 
representation. Let $D$ be the unique quaternion algebra $D$ over $F$ ramified exactly at 
\begin{itemize}
\item $g-1$ of the real places if $g$ is odd; and 
\item $v$ and $g-1$ of the real places of $F$ if $g$ is even.
\end{itemize}
Let $\CO$ be an Eichler order of level $\gN$, and $X_0^D(\gN)$ the
Shimura curve attached to $\CO$. Then, there is a morphism $\phi:\,\Jac(X_0^D(\gN)) \to B$ defined over $\Q$. 
\end{cor}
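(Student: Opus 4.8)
The plan is to combine the existing modularity/descent picture from Corollary~\ref{cor:gross-evidence1} with the Jacquet--Langlands correspondence and the theory of Shimura curve parametrisations. By Corollary~\ref{cor:gross-evidence1} there is a cuspidal Hilbert automorphic representation $\pi$ on $\GL_2(\A_F)$ of parallel weight $2$ and level $\gN$ associated to $A$; since $A = B\times_\Q F$ and $\End_\Q(B)\otimes\Q = K$, the Hecke orbit of $\pi$ under $\Gal(F/\Q)$ corresponds via Theorem~\ref{thm:av-descent1} to the $\Gal(L/K)$-orbit of $\pi$, so $[\pi]$ has exactly $g$ elements and they are the $\sigma^i$-conjugates of $\pi$. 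The newform $f\in\pi$ has trivial central character (as $\End_\Q(B)=\mathcal O$ forces the determinant of the associated Galois representation to be the cyclotomic character). First I would record the local conditions on $\pi$ needed to apply Jacquet--Langlands: at the $g-1$ archimedean places where $D$ is ramified, $\pi$ is discrete series (parallel weight $2$), and when $g$ is even, the hypothesis supplies a finite place $v\mid\gN$ at which $\pi_v$ is Steinberg or a ramified discrete series; in either case the ramification set of $D$ has even cardinality and $\pi$ is JL-transferable. Thus there is an automorphic form $f^D$ on $D^\times(\A_F)$ with the same Hecke eigenvalues as $f$, and because $F$ has narrow class number one and $\CO$ is an Eichler order of level $\gN$, this form lives on the Shimura curve $X_0^D(\gN)$ (with its canonical model over $F$, descending to $\Q$ since $F/\Q$ is Galois and $\gN^\sigma=\gN$, so $\Gal(F/\Q)$ permutes the curves $X_0^{D^{\sigma^i}}(\gN)$ and the orbit is defined over $\Q$).

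Next I would build the parametrisation over $F$ first and then descend. The Hecke eigenform $f^D$ cuts out, via the Eichler--Shimura construction on the Shimura curve, an isogeny factor of $\Jac(X_0^D(\gN))$ of dimension $[L:\Q]$ which is isogenous to $A$ (both have the same $L$-function by strong multiplicity one, Faltings, and the JL compatibility of local factors). This yields a morphism $\phi_F:\Jac(X_0^D(\gN))\to A$ defined over $F$. To descend $\phi_F$ to $\Q$ (more precisely to a morphism landing in $B$), I would use that $\Jac(X_0^D(\gN))$, together with its Hecke action, is equipped with a $\Gal(F/\Q)$-semilinear structure coming from $\gN^\sigma=\gN$ and the fact that $\sigma$ carries $D$ to $D^\sigma$ and the orbit is $\Q$-rational; the target side is handled by Theorem~\ref{thm:lifts}(ii), which gives exactly $B$ with $A\sim B\times_\Q F$. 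Concretely, $\mathrm{Res}_{F/\Q}\Jac(X_0^D(\gN))$ carries an action of the Hecke algebra and $\Gal(F/\Q)$, and the sub-isogeny-factor cut out by the full $\Gal(F/\Q)$-orbit $[\pi]$ descends to a $\Q$-abelian variety isogenous to $B$; the universal property of $\mathrm{Res}_{F/\Q}$ (or equivalently Galois descent of the morphism $\sum_i \sigma^i(\phi_F)$ suitably interpreted) then produces the desired $\phi:\Jac(X_0^D(\gN))\to B$ over $\Q$.

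**The main obstacle** I expect is the descent of the parametrisation morphism to $\Q$ rather than merely to $F$, i.e. ensuring that the map out of $\Jac(X_0^D(\gN))$ (as opposed to out of $\mathrm{Res}_{F/\Q}$ of it, or out of the product of the Galois-conjugate Jacobians) can genuinely be taken $\Q$-rational and to land in $B$ itself. This requires knowing that the $\Gal(F/\Q)$-orbit of Shimura curves $\{X_0^{D^{\sigma^i}}(\gN)\}$ is, as a $\Q$-scheme, a single curve $X_0^D(\gN)$ already defined over $\Q$ (which uses that $D$ is \emph{characterised} by the stated ramification data, that $\gN$ is $\sigma$-stable, and the canonical model theory of Shimura curves over totally real fields à la Deligne/Carayol together with descent of the model along $F/\Q$), and then that the Hecke-isotypic quotient attached to the $\Gal(F/\Q)$-orbit $[\pi]$ descends compatibly. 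One has to be careful that the naive "diagonal" sub-abelian-variety of $\prod_i \Jac(X_0^{D^{\sigma^i}}(\gN))$ isogenous to $A^g$ descends to $\mathrm{Res}_{F/\Q}B$, and then invoke adjunction $\Hom_\Q(\Jac(X_0^D(\gN)),B)=\Hom_F(\Jac(X_0^D(\gN))_F, B_F)$ — but this adjunction is for $\mathrm{Res}_{F/\Q}$ on the \emph{source}, so some care with which variety plays which role is needed. The remaining ingredients — JL transfer (even ramification set, local conditions at $\infty$ and at $v$), Eichler--Shimura on Shimura curves, equality of $L$-functions via Theorem~\ref{thm:lifts}, and narrow class number one to identify the relevant double coset space — are by now standard and I would treat them briskly, citing the literature.
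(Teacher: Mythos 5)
Your proposal is correct and takes essentially the same route as the paper: Jacquet--Langlands plus Eichler--Shimura produce the parametrisation $\phi_F:\Jac(X_0^D(\gN))\to A$ over $F$, and the descent to $\Q$ rests on the $\sigma$-stability of $D$ and $\gN$ (the paper invokes Doi--Naganuma for the $\Q$-rationality of $X_0^D(\gN)$) together with Theorem~\ref{thm:lifts}, the only difference being packaging --- the paper descends the whole decomposition of $\Jac(X_0^D(\gN))^{\rm new}$ into Galois orbits of Hecke constituents, whereas you phrase the last step via $\Res_{F/\Q}$ and a trace of conjugate morphisms. One incidental slip: the Hecke orbit $[\pi]$ has $[L:\Q]$ elements (the $\Aut(\C)$-conjugates), not $g$, and is not just $\{{}^{\sigma^i}\!\pi\}$; what your argument actually uses, and all that is needed, is that $[\pi]$ is $\Gal(F/\Q)$-stable.
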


\begin{proof} The conditions of Corollary~\ref{cor:prime-degree2} ensure that there is a non-constant morphism $\phi:\,\Jac(X_0^D(\gN)) \to A$. 
We want to show that $\phi$ descends to $\Q$. 
First, we note that since $\gN = \gN^\sigma$, and $F$ has narrow class number one, the field of moduli of 
$X_0^D(\gN)$ is $\Q$ (see \cite{doi-naga67}). Let $\Jac(X_0^D(\gN))^{\rm new}$ be the new part of $\Jac(X_0^D(\gN))$, 
and consider the decomposition into isogeny classes over $F$:
$$ \Jac(X_0^D(\gN))^{\rm new} \sim \prod_{[f]}A_f \sim \prod_{[f] \in \mathcal{E}} \left(\prod_{[h] \in G\cdot[f]}A_{h}\right) \sim \prod_{[f] \in \mathcal{E}} B_{f},$$
where $S_2(\gN)^{\rm new}$ is the new subspace of cusp forms of level $\gN$ and weight $2$ over $F$, and $\mathcal{E}$ a set of representatives of the 
Hecke orbits of the eigenforms in $S_2(\gN)^{\rm new}$ under the action of $G = \Gal(F/\Q)$. By Theorem~\ref{thm:lifts} and Remark~\ref{rem:lifts}, 
the abelian variety $B_f$ attached to the Galois orbit $G\cdot[f]$ descends to $\Q$. So, the decomposition of $\Jac(X_0^D(\gN))^{\rm new}$ into 
Galois orbits of its simple factors descends to $\Q$. The Hecke orbit of the newform attached to $A$ is unique in its Galois orbit. So, in particular, 
the morphism $\phi:\,\Jac(X_0^D(\gN)) \to A$ descends to $\Q$. 
\end{proof}

\begin{rem}\rm Although the morphism $\phi:\,\Jac(X_0^D(\gN)) \to B$ is defined over $\Q$,
the abelian variety $B$ itself cannot be the quotient of the Jacobian of a Shimura curve
arising from a quaternion algebra $D$ defined over $\Q$ as $B$ is not of $\GL_2$-type.
\end{rem}

\section{\bf Examples}\label{sec:examples}
In this section, we illustrate Theorems~\ref{thm:lifts} and~\ref{thm:gross-evidence1} with some concrete examples. 

\subsection{An example over the real cubic field of smallest discriminant} \label{ex:zeta7}
Let $F = \Q(\zeta_7)^+ = \Q(b)$, where $b^3 + b^2 - 2b - 1=0$, be the totally real cubic subfield 
of the cyclotomic field generated by the $7$-th root of unity. This is the totally real cubic field of smallest discriminant.
It is Galois, we let $\sigma$ be a generator of $\Gal(F/\Q)$. Let $D$ be the quaternion algebra over $F$ ramified
at exactly 2 of the real places. Let $\CO_D$ be a maximal order in $D$, and $\CO \subset \CO_D$ an Eichler order of level $\gq^2$
where $\gq$ is the inert prime above $3$ in $F$. Let $X_0^D(\gq^2)$ be the Shimura curve attached to $\CO$; it has genus $10$. 
Let $\mathrm{Jac}(X_0^D(\gq^2))$ be its Jacobian. The new part $\mathrm{Jac}(X_0^D(\gq^2))^{\rm new}$ is an $8$-dimensional abelian variety. 

\begin{prop}\label{prop:zeta7} Let $C/\Q$ the hyperelliptic curve given by 
$$C:\, y^2 = -7x^7 + 98x^6 - 1470x^5 + 4606x^4 - 1715x^3 - 4116x^2 - 4459x + 4802,$$
with discriminant $\mathrm{disc}(C) = -2^{36}\cdot 3^{24}\cdot 7^{28}$.
Let $B = \mathrm{Jac}(C)$ be its Jacobian, and $A = B \times_{\Q} F$. Then, we have the followings:
\begin{enumerate}[(i)]
\item $\End_{\Q}(B) = \Z$, and $\End_F(A)\otimes \Q \simeq F$;
\item There exists an automorphic representation $\Pi$ on $\GSpin_7(\A_{\Q})$ such that 
$$L(B, s) = L(\Pi, s);$$ 
\item There is a non-constant morphism $\phi: \Jac(X_0^D(\gq^2)) \to B$ defined over $\Q$. 
\end{enumerate}
\end{prop}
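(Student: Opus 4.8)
The plan is to verify the three claims in Proposition~\ref{prop:zeta7} in turn, treating (i) as the computational heart of the argument and (ii)--(iii) as applications of the machinery already developed in the paper.

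For part (i), I would first compute the conductor of $C/\Q$ (or equivalently of $B=\Jac(C)$) from the given discriminant and a careful local analysis of the hyperelliptic model at the primes $2$, $3$, $7$; the expected answer should be $\mathrm{cond}(B)=N^3$-compatible with the Shimura curve $X_0^D(\gq^2)$ appearing in (iii), i.e.\ related to $3^2$ (and possibly a contribution at the ramified real places data is not relevant here). Next, to see $\End_\Q(B)=\Z$, I would use the algorithm of Costa--Mascot--Sijsling--Voight (gratefully acknowledged in the paper) to certify the geometric endomorphism ring of $\Jac(C)$, then descend to $\Q$; the key point is that $B$ must \emph{not} be of $\GL_2$-type. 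To establish $\End_F(A)\otimes\Q\simeq F$ where $A=B\times_\Q F$ and $F=\Q(\zeta_7)^+$, I would show that over $F$ the abelian surface $A$ (dimension $g=3$ over $\Q$, so here $\dim B=3$; the cubic field $F$ has degree $g=3$ and $L=F$) acquires an action of $\CO_F$. Concretely: exhibit a Hilbert newform $f$ of parallel weight $2$, level $\gq^2$ over $F$, with field of coefficients $L=F$, whose associated $\GL_2$-type abelian threefold $A_f$ is isogenous to $A$ over $F$. This is verified by matching Euler factors: compute $a_\gp(f)$ for many primes $\gp$ of $F$ (via the Shimura curve $X_0^D(\gq^2)$ or directly via Hilbert modular form computations) and compare $\mathrm{charpoly}(\Frob_\gp \mid T_\ell A)$ with $\prod_{\theta\in S_\Q}(x^2-\theta(a_\gp(f))x+\N\gp)$ as in the displayed identity inside the proof of Theorem~\ref{thm:av-descent1}. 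Having identified $A\sim A_f$ with $\End_F(A_f)\otimes\Q = \Q(\{a_\gp(f)\})=F$, and having checked (again computationally, picking a totally split prime $\gp$) that the $a_\gp(f)$ actually generate $F$ and not a proper subfield, I conclude $\End_F(A)\otimes\Q\simeq F$.

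For part (ii), I would check that the newform $f$ satisfies the hypotheses of Theorem~\ref{thm:gross-evidence1} (equivalently Corollary~\ref{cor:gross-evidence1}) with $E=\Q$, $K=\Q$, $L=F$, $g=3$: namely that $\pi$ (the automorphic representation attached to $f$) is not a base change from $\Q$ — which follows from $B$ not being of $\GL_2$-type over $\Q$ — that its Hecke orbit is $\Gal(F/\Q)$-stable — which follows from part (i)'s identity $a_{\sigma(\gp)}(f)=\tau(a_\gp(f))$ — and that Condition~4 holds: pick a rational prime $\ell$ (try $\ell=3$ first, or another small prime) with a prime $\lambda'\mid\ell$ of $\Q$, i.e.\ $\lambda'=\ell$, totally ramified in $F$, and check that $\bar\rho_{A,\lambda}:\Gal(\Qbar/F)\to\GL_2(\F_\ell)$ is surjective. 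Surjectivity of the mod-$\ell$ image is again a finite computation (compute $\Frob_\gp$ traces mod $\ell$ for enough $\gp$ and rule out the proper subgroups of $\GL_2(\F_\ell)$ via the standard Dickson classification); since $3$ is totally ramified in $\Q(\zeta_7)^+$ (it is inert, actually — so one must instead use $7$, which is totally ramified, or check which primes ramify in $F$ and pick accordingly), I would choose $\ell=7$ if $7\nmid$ the relevant conductor data, or otherwise revisit the choice. With Condition~4 in hand, $\bar\rho_{A,\lambda}$ is modular by the Serre conjecture over $\Q$ \cite{ser87, kw09}, hence $\rho_{A,\lambda}$ is modular by \cite{kt16} (if $\ell\ge 3$) so $A$ is attached to $\pi$, and Theorem~\ref{thm:gross-evidence1} produces $\Pi$ on $\GSpin_7(\A_\Q)$ with $L(B,s)=\prod_{\Pi'\in[\Pi]}L(\Pi',s)$. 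Since $\End_\Q(B)=\Z$ forces $K=\Q$, the Hecke orbit $[\Pi]$ is a single element, giving the clean statement $L(B,s)=L(\Pi,s)$.

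For part (iii), I would invoke Corollary~\ref{cor:prime-degree2}: the dimension $g=3$ is odd, so no discrete-series condition at a finite place is needed; I must check that $F=\Q(\zeta_7)^+$ has narrow class number one (it does — it is the cubic field of smallest discriminant, with trivial narrow class group), that $\gN=\gq^2$ is $\sigma$-stable (immediate, since $\gq$ is the inert prime above $3$, hence $\Gal(F/\Q)$-fixed), and that $D$ is the quaternion algebra over $F$ ramified at exactly $g-1=2$ of the real places, which is precisely the $D$ in the setup. Then Corollary~\ref{cor:prime-degree2} yields a morphism $\phi:\Jac(X_0^D(\gN))\to B$ defined over $\Q$; more precisely, the Hecke orbit of the newform attached to $A$ is a single Galois orbit, the corresponding factor $B_f$ of $\Jac(X_0^D(\gq^2))^{\mathrm{new}}$ descends to $\Q$, and $B_f\sim B$. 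The main obstacle throughout is part (i): one must produce the Hilbert newform $f$ over the cubic field with the exact coefficient field $F$ and level $\gq^2$, prove the isogeny $A\sim A_f$ over $F$ rigorously (not just to finite precision) — which requires either Faltings-type isogeny bounds or an explicit isogeny — and certify $\End_\Q(B)=\Z$ unconditionally; these are precisely the computations where the Costa--Mascot--Sijsling--Voight algorithm and the Hilbert modular forms data are indispensable.
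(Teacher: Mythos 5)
Your proposal reaches all three conclusions by legitimate means, but it differs from the paper's argument at the two crux points. For the endomorphism statements, the paper runs the computation the other way around: it uses the Costa--Mascot--Sijsling--Voight algorithm only to show that $A$ has RM by an order of index $3$ in $\CO_F$, and it does \emph{not} prove $\End_\Q(B)=\Z$ by descending the endomorphism lattice; instead it first proves $A$ is modular via the dichotomy ``if $B$ were of $\GL_2$-type then $A$ is modular as a base change, otherwise Corollary~\ref{cor:prime-degree} applies (using surjectivity of $\bar\rho_{A,\lambda}$ at the totally ramified prime $\lambda\mid 7$, exactly the prime you settle on)'', and then rules out the $\GL_2$-type possibility by showing that neither of the two $3$-dimensional Hecke constituents of $S_2(\gq^2)^{\rm new}$ is a base change from $\Q$ (Table~\ref{table:zeta7}); pinning down the level $\gq^2$ requires knowing $\mathrm{cond}(A)=\gq^6$, and the paper gets the behaviour at $2$ not by a direct local analysis of the genus-$3$ model at $2$ --- which is the hard part you gloss over --- but by computing the conductor of the quadratic twist by $8$ and transferring the information back. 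By contrast, you propose to establish $\End_F(A)\otimes\Q\simeq F$ by exhibiting the newform $f$ and proving $A\sim A_f$ rigorously (Faltings--Serre or an explicit isogeny), and $\End_\Q(B)=\Z$ by Galois descent of the certified geometric endomorphism ring. This is viable, but note two things: the isogeny route is by far the heaviest step and is in fact redundant in your own plan, since once the endomorphism algebras over $\Q$ and $F$ are certified, modularity of $A$ follows from Corollary~\ref{cor:prime-degree} exactly as you invoke it in part (ii), with no need for Faltings--Serre; and your direct descent argument for $\End_\Q(B)=\Z$, if the field of definition of the endomorphisms is certified, is arguably cleaner than the paper's base-change argument because it avoids the conductor computation at $2$ altogether. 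Your treatment of (ii) (choosing $\ell=7$, totally ramified, checking residual surjectivity, Serre plus Khare--Thorne, and $K=\Q$ collapsing the Hecke orbit) and of (iii) (Corollary~\ref{cor:prime-degree2} with $g=3$ odd, narrow class number one, $\sigma$-stable level $\gq^2$) matches the paper.
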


\begin{table}\small
\caption{Hecke eigenvalues for the newforms of level $\gq^2$ over the cubic subfield of
$\Q(\zeta_{7})^{+}$ with coefficient field $\Q(\zeta_{7})^{+}$.} 
\label{table:zeta7}
\begin{tabular}{ >{$}c<{$}   >{$}r<{$}  >{$}r<{$}   >{$}r<{$} }
\toprule
\mathrm{N}\gp&\gp&a_{\gp}(g)&a_{\gp}(g')\\
\midrule
7 & b^2 + 2b - 1 & 2 & 2\\
8 & 2 & 3 & -3\\
13 & -2b^2 - b + 2 & 3b + 2 & 3b + 2\\
13 & b^2 + 2b - 2 & 3b^2 - 4 & 3b^2 - 4\\
13 & b^2 - b - 3 & -3b^2 - 3b + 5 & -3b^2 - 3b + 5\\
27 & 3 & 0 & 0\\
29 & b^2 - 2b - 3 & 3b^2 - 3b - 9 & -3b^2 + 3b + 9\\
29 & -b^2 - 3b + 1 & -6b^2 - 3b + 6 & 6b^2 + 3b - 6\\
29 & -2b^2 + b + 4 & 3b^2 + 6b - 6 & -3b^2 - 6b + 6\\
41 & b^2 - 2b - 4 & -3b^2 - 6b & 3b^2 + 6b\\
41 & 2b^2 + 3b - 4 & 6b^2 + 3b - 12 & -6b^2 - 3b + 12\\
41 & -3b^2 - b + 3 & -3b^2 + 3b + 3 & 3b^2 - 3b - 3\\
 \bottomrule
\end{tabular}
\end{table}

\begin{proof} We use the algorithm described in \cite{cmsv17} to show that $A$ has RM by a suborder of $\CO_F$
of index $3$. A \verb|Magma| calculation shows that the prime-to-$2$-part of the conductor of $A$ is $\gq^6$. 

Let $\lambda$ be the unique prime above $7$. By computing the orders of Frobenii for the first few primes in $F$, one sees that 
$\bar{\rho}_{A, \lambda}:\, \Gal(\Qbar/F) \to \GL_2(\F_7)$ is surjective. If $B$ were of $\GL_2$-type, then $A$ would be modular 
as a base change. Otherwise, we can apply Corollary~\ref{cor:prime-degree}. So, in either cases, $A$ is modular. 

Now, we will show that $B$ cannot be of $\GL_2$-type. To this end, let $C'$ be the quadratic twist of $C$ by $d = 8$, and let 
$B' = \mathrm{Jac}(C')$ and $A' = B'\times_{\Q} F$. Then $A'$ is also modular. It is much easier to compute the conductor of
$A'$, and we obtain that $\mathrm{cond}(A') = (2^{18}\cdot 3^6) = {\gN'}^3$. Since $A'$ is the quadratic twist of $A$ by $d = 8$,
we get that $\mathrm{cond}(A) = \gq^6 = \gN^3$. So $A$ has good reduction at $2$. 

If $B$ were of $\GL_2$-type, then the Hilbert newform corresponding to $A$
would be a base change. The new subspace of Hilbert cusp forms of level $\gN$ and weight $2$ is $8$-dimensional 
(as explained above). It has two Hecke constituents of dimension $3$. In Table~\ref{table:zeta7}, we have listed their
Hecke eigenvalues for the first few primes. One can see that they are inner twist of each other, and neither of them is a 
base change. Therefore $B$ cannot be of $\GL_2$-type. Since $F$ is cubic, this means that $\End_{\Q}(B) = \Z$. 
The same shows that $\End_\Q(B') = \Z$. We conclude the proof using Corollaries~\ref{cor:prime-degree} and~\ref{cor:prime-degree2}.  
\end{proof}

\begin{rem}\rm The argument we used to prove that $B$ is not of $\GL_2$-type cannot be applied directly to $B'$ as the
dimension of the new subspace of cusp forms of level $\gN'$ and weight $2$ is $1884288$. 
\end{rem}

\subsection{An example over the real cubic subfield of $\Q(\zeta_{43})^+$} \label{ex:real-cubic-43} 
Our second example is over the cubic subfield $F = \Q(b)$ of $\Q(\zeta_{43})^{+}$, here $b^3 + b^2 - 14b + 8=0$. 
We consider the quaternion algebra $D$ ramified at exactly two of the real places of $F$. We let $\CO_D$ be a maximal order, and $X_0^D(1)$ the Shimura
curve of level $(1)$. This is a curve of genus $6$ defined over $\Q$. Under the Hecke action, we have the decomposition
$$\mathrm{Jac}(X_0^D(1)) \sim E \times A \times A',$$
where $E$ is the base change of the elliptic curve defined over $\Q$ by
\begin{align*}
y^2 + xy + y = x^3 - 39x - 27,
\end{align*}
with $j$-invariant $j(E)=1849=43^2$. The abelian surface $A$ has RM by $\Z[\frac{1+\sqrt{5}}{2}]$. It is the Jacobian of the genus $2$ 
curve $C:\, y^2 +  Q(x)y = P(x)$ given by 
\begin{align*}
P(x) &:=(9b^2 - 33b + 20)x^6 + (24b^2 - 88b + 54)x^5 + (42b^2 - 154b + 94)x^4\\
&\qquad + (42b^2 - 154b + 94)x^3+ (21b^2 - 77b + 47)x^2 + (6b^2 - 22b + 13)x\\
&\qquad - 15b^2 + 55b - 34;\\
Q(x) &:= x^3+x+1.
\end{align*} We have $\mathrm{disc}(C) = u(-b^2 + 5b - 3)^{22}$, where $-b^2 + 5b - 3$ generates the unique prime ideal above $43$
and $u\in \CO_F^\times$, and $\mathrm{cond}(A) = (1)$. In fact, $A$ is a quadratic twist of the base change of the Jacobian of
the curve $C'$ given by
$$C':\, y^2 - (x^3 + x  + 1)y = -x^6 - 2x^5 - 4x^4 - 4x^3 - 2x^2 - x + 1.$$

\begin{prop}
There exists an abelian threefold $B'$ defined over $\Q$ such that $\End_{\Q}(B') = \Z$, and $A' = B'\times_{\Q} F$ becomes
of $\GL_2$-type, with {\rm RM} by the cubic subfield $K = \Q(e)$ of $\Q(\zeta_{19})^{+}$ given by $e^3 - e^2 - 6e + 7=0$. So, the 
decomposition $$\mathrm{Jac}(X_0^D(1)) \sim E \times A \times A'$$
descends to $\Q$.
\end{prop}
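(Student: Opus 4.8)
The plan is to realise $A'$ as the abelian variety attached (via Jacquet--Langlands, i.e. as a constituent of the new part of $\Jac(X_0^D(1))$) to a single Hecke orbit of Hilbert newforms over $F$ of parallel weight $2$ and level $(1)$, and then to descend its isogeny class to $\Q$ by Theorem~\ref{thm:lifts} (which, for part (ii), rests on Theorem~\ref{thm:av-descent}). First I would compute the space $S_2((1))$ of Hilbert cusp forms of parallel weight $2$ and level $(1)$ over $F$; it is $6$-dimensional and splits into three Hecke constituents of dimensions $1$, $2$, $3$, whose attached abelian varieties are isogenous over $F$ to $E$, $A$, $A'$, so in particular $\dim A' = 3$. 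Running the algorithm of \cite{cmsv17} on the explicit model of $A'$ I expect to confirm that $\End_F(A')\otimes\Q$ is exactly the cubic field $K = \Q(e)$ with $e^3 - e^2 - 6e + 7 = 0$, which is then the coefficient field of the associated newform $h$; note $K$ is the cubic subfield of $\Q(\zeta_{19})^+$ and hence cyclic over $\Q$.

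Next I would record that the Hecke orbit $[h]$ is stable under $G := \Gal(F/\Q)$: since $\Jac(X_0^D(1))$ is defined over $\Q$ and its simple-over-$F$ isogeny factors have pairwise distinct dimensions $1,2,3$, the Galois action permutes and therefore fixes each factor's isogeny class, so ${}^\sigma\! h$ again lies in $[h]$ and ${}^\sigma\! h = h^{\tau_\sigma}$ for a homomorphism $\sigma\mapsto\tau_\sigma$ from $G$ to $\Aut(K)$, both groups being cyclic of order $3$. The step I expect to be the main obstacle is to check that this homomorphism is \emph{nontrivial}, equivalently that $h$ is not a base change from $\Q$ (the only proper subfield of $F$). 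Concretely I would tabulate, as in Table~\ref{table:zeta7}, the eigenvalues $a_\gp(h)$ at the primes above a few totally split rational primes and exhibit a fixed generator $\tau$ of $\Gal(K/\Q)$ with $a_{\sigma(\gp)}(h) = \tau(a_\gp(h))$; by strong multiplicity one this identity at finitely many split primes, together with the $G$-stability of $[h]$, forces it for all $\gp$. This is precisely the feature separating the present example from a base change: were $\tau$ trivial, $h$ would descend to a classical newform with cubic coefficient field, forcing $B'$ to be of $\GL_2$-type and contradicting what is to be proved.

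With $\End_F(A')\otimes\Q = K = \Q(e)$ and $a_{\sigma(\gp)}(h) = \tau(a_\gp(h))$ for a generator $\tau$ of $\Gal(K/\Q)$ in hand, I would invoke Theorem~\ref{thm:lifts} with $E = \Q$, $\gN = (1)$, $f = h$, $g = 3$, and coefficient field $\Q(e)$. Since $[\Q(e):\Q] = 3 = g$, the cyclic degree-$g$ subextension produced in part (i) forces the descended endomorphism field to be $\Q$, so part (ii) yields an abelian threefold $B'/\Q$ with $\End_\Q(B')\otimes\Q = \Q$ --- hence $\End_\Q(B') = \Z$ --- and $A' \sim B'\times_\Q F$; and $B'\times_\Q F$, being $F$-isogenous to $A'$, is of $\GL_2$-type with RM by $K = \Q(e)$, as desired. (Part (iii) moreover gives the lift $\Pi$ on $\GSpin_7(\A_\Q)$ with $L(B',s) = \prod_{\Pi'\in[\Pi]}L(\Pi',s)$.)

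Finally, for the last assertion: each of $E$, $A$, $A'$ has $G$-stable isogeny class by the dimension argument above, and each therefore descends to $\Q$: $A'$ by the previous paragraph, while $E$ and $A$ descend by the theory of building blocks, the $G$-action on their endomorphism fields $\Q$ and $\Q(\sqrt5)$ being trivial because $G$ has odd order, so the associated newforms are base changes from $\Q$ realised by the explicit models $y^2 + xy + y = x^3 - 39x - 27$ and $\Jac(C')$ given above. Exactly as in the proof of Corollary~\ref{cor:prime-degree2}, the decomposition of the $\Q$-rational abelian variety $\Jac(X_0^D(1))$ into its three $G$-stable pieces is $\Q$-rational, so $\Jac(X_0^D(1)) \sim E_0\times A_0\times B'$ over $\Q$ for suitable $E_0, A_0, B'$ defined over $\Q$ base-changing to $E, A, A'$; that is, the decomposition $\Jac(X_0^D(1))\sim E\times A\times A'$ descends to $\Q$.
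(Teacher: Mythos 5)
Your proposal is correct and follows essentially the same route as the paper: identify $A'$ with the unique $3$-dimensional Hecke constituent of level $(1)$, use the listed eigenvalues (Table~\ref{table:eigenform-zeta43}) to rule out base change while Galois-stability of the orbit is automatic from uniqueness of that constituent, then apply Theorem~\ref{thm:lifts} (forcing $K=\Q$, hence $\End_\Q(B')=\Z$) and conclude the descent of the full decomposition as in Corollary~\ref{cor:prime-degree2}. The extra details you supply (split-prime check, odd-order argument for $E$ and $A$) are consistent with, and only elaborate on, the paper's argument.
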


\begin{proof} The Hecke eigenvalues of the newform corresponding to $A$ are listed in Table~\ref{table:eigenform-zeta43}.
One see that it cannot be a base change.  Since it belongs to the unique Hecke constituent of dimension $3$, it must
satisfy the conditions of Theorem~\ref{thm:lifts}. It follows that its isogeny class descend to an abelian threefold $B'$
with $\End_{\Q}(B') = \Z$. Since $E$ and $A$ also descend to $\Q$, we conclude from Corollary~\ref{cor:prime-degree2}
that the decomposition of $\mathrm{Jac}(X_0^D(1))$ must descend as well. 
\end{proof}

\begin{rem}\rm From the Hecke eigenvalues listed in Table~\ref{table:eigenform-zeta43}, it is not hard to check that
the mod $2$ Galois representation attached to $A'$ surjects onto $\PSL_2(\F_{8})$. Since this is not a subgroup 
of $S_{8}$, we see that $B'$ cannot be the Jacobian of a hyperelliptic curve (assuming that it is principally polarised).
\end{rem}

\begin{table}
\caption{Euler factors for the threefold $A'$ of level $(1)$ over the cubic subfield of
$\Q(\zeta_{43})^{+}$ with RM by the cubic subfield of $\Q(\zeta_{19})^{+}$.} 
\label{table:eigenform-zeta43}
\scalebox{0.88}{
\begin{tabular}{ >{$}c<{$}   >{$}r<{$}  >{$}r<{$}   >{$}r<{$} }
\toprule
\mathrm{N}\gp&\gp&a_{\gp}(g)&\text{Euler factor}\\
\midrule
2 & \frac{1}{2}(b^2 - 3b) & -e^2 - e + 5 & x^6 - x^5 + 3x^3 - 4x + 8\\
2 & b - 3 & e^2 - 4 & x^6 - x^5 + 3x^3 - 4x + 8\\
2 & \frac{1}{2}(-b^2 - 3b + 2) & e & x^6 - x^5 + 3x^3 - 4x + 8\\
11 & -2b + 1 & e^2 + e - 9 & x^6 + 13x^5 + 83x^4 + 335x^3 + 913x^2 + 1573x + 1331\\
11 & -b^2 - b + 11 & -e - 4 & x^6 + 13x^5 + 83x^4 + 335x^3 + 913x^2 + 1573x + 1331\\
11 & b^2 + 3b - 7 & -e^2 & x^6 + 13x^5 + 83x^4 + 335x^3 + 913x^2 + 1573x + 1331\\
27 & 3 &0 & x^6 + 81x^4 + 2187x^2 + 19683\\
41 & -b^2 - b + 15 & -2e^2 - 2e + 9 & x^6 + x^5 + 98x^4 + 113x^3 + 4018x^2 + 1681x + 68921\\
41 & b^2 + 3b - 3 & 2e - 1 & x^6 + x^5 + 98x^4 + 113x^3 + 4018x^2 + 1681x + 68921\\
41 & 2b - 5 & 2e^2 - 9 & x^6 + x^5 + 98x^4 + 113x^3 + 4018x^2 + 1681x + 68921\\
\bottomrule
\end{tabular} }
\end{table}

\section{\bf Application to the Gross conjecture for non-solvable Galois number fields}\label{sec:ex-nf-gross}

In \cite{demb09} (see also \cite{ser09}), the second author proved the following theorem:

\begin{thm}\label{thm:gross-conj1} There exists a Galois number field $M$ ramified at $2$ only, with Galois group $\Gal(M/\Q) = \SL_2(\F_{2^8})^2 \rtimes \Z/2\Z$. 
\end{thm}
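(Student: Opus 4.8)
The plan is to construct $M$ explicitly by finding a suitable polynomial over $\Q$ whose splitting field has the prescribed Galois group and ramification. First I would search for a degree-$8$ (or other small degree) polynomial $f \in \Z[x]$ with $\Disc(f)$ a power of $2$, so that the field $F_0 = \Q[x]/(f)$ is unramified outside $2$; the Galois closure $\widetilde{F_0}/\Q$ should have Galois group a specified subgroup of $\GL_2(\F_{2^8})$ (typically arising because $\F_{2^8}$-structure comes from iterating the construction or from a modular/automorphic source). In practice this is a computer search: impose ramification only at $2$, compute the Galois group of candidates, and check when the image is large enough. The group $\SL_2(\F_{2^8})$ has order $2^8(2^{16}-1)$, so one is looking for a field of moderate degree in which the residual representation attached to a modular form mod $2$ has maximal image.

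Next I would produce the factor of the Galois group that is $\SL_2(\F_{2^8})^2 \rtimes \Z/2\Z$ rather than a single copy of $\SL_2(\F_{2^8})$. The natural mechanism is to take two modular forms (or two twists/conjugates of one form) whose mod-$2$ Galois representations $\bar\rho_1, \bar\rho_2 : G_\Q \to \GL_2(\F_{2^8})$ have images $\SL_2(\F_{2^8})$, are ramified only at $2$, and are \emph{independent} in the sense that the image of $(\bar\rho_1,\bar\rho_2)$ in $\GL_2(\F_{2^8})^2$ is the full fibre product over whatever common quotient they share — by a Goursat-type argument one needs the two representations to be non-isomorphic even after twisting and after applying $\Aut(\F_{2^8})$. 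The outer $\Z/2\Z$ then comes from the Frobenius of $\F_{2^8}/\F_{2^4}$ or from an automorphism swapping the two factors, depending on exactly how the $\F_{2^8}$-structure and the pair are arranged; one must verify this semidirect product is actually realized, i.e. that the outer automorphism is hit by some element of $G_\Q$. The field $M$ is then the compositum of the fixed fields cut out by these data.

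The existence of such mod-$2$ forms ramified only at $2$ is exactly the kind of statement one verifies by explicit computation of spaces of modular forms of $2$-power level (weight $2$ on $\Gamma_0(2^k)$ for suitable $k$, or forms of level $1$ and small weight with $2$ in the coefficient field), computing the mod-$2$ reductions, and checking the image is $\SL_2(\F_{2^8})$ by examining traces of Frobenius at several small primes $p \neq 2$ — in particular finding a Frobenius whose characteristic polynomial is irreducible over $\F_{2^8}$ forces a non-split torus in the image, and a few more such checks pin down the full group via Dickson's classification of subgroups of $\mathrm{PSL}_2$. This is precisely the content of \cite{demb09}, and I would cite that computation rather than redo it; the role of this excerpt's earlier machinery (and of the Serre conjecture, invoked as \cite{ser87, kw09}) is to guarantee that the candidate mod-$2$ representations are genuinely modular and hence that the computed Hecke data really does describe a Galois extension of $\Q$.

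The main obstacle is the \emph{independence} of the two $\SL_2(\F_{2^8})$ factors: one must rule out that $\bar\rho_2$ is a Galois-twist or $\Gal(\F_{2^8}/\F_2)$-conjugate of $\bar\rho_1$, since otherwise Goursat's lemma would collapse the product to a proper subgroup (a diagonal copy, possibly extended by an automorphism) rather than the full $\SL_2(\F_{2^8})^2 \rtimes \Z/2\Z$. Establishing this requires comparing the two forms at enough primes to see that no single $\sigma \in \Aut(\F_{2^8}) \times (\text{twists})$ matches all their Frobenius traces, which is a finite but delicate verification; identifying the correct outer $\Z/2\Z$ and confirming the semidirect (not direct) product structure is the secondary technical point. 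Everything else — the ramification bound, the maximality of each individual image — is comparatively routine given the explicit forms.
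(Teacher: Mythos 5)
Your central mechanism cannot work, and it is not what \cite{demb09} does. You propose to obtain the two factors from mod-$2$ representations $\bar\rho_1,\bar\rho_2\colon \Gal(\Qbar/\Q)\to\GL_2(\F_{2^8})$ attached to classical modular forms of $2$-power level (or level one), unramified outside $2$ and with image $\SL_2(\F_{2^8})$. No such representations exist: by Tate's classical result that there are no irreducible two-dimensional mod-$2$ representations of $\Gal(\Qbar/\Q)$ unramified outside $2$ (the level-one, $p=2$ statement underlying the Khare--Wintenberger proof of the Serre conjecture), every such representation is reducible over $\overline{\F}_2$ and has solvable image, so the mod-$2$ reduction of a form on $\Gamma_0(2^k)$ can never have image $\SL_2(\F_{2^8})$. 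This is exactly why Gross's question was hard and why the theorem has content; the Serre conjecture, which you invoke as guaranteeing modularity of your candidates, is in fact the obstruction showing that the desired extension cannot be cut out by two-dimensional representations of $\Gal(\Qbar/\Q)$ at all. Your preliminary polynomial search is also not viable: the smallest faithful permutation representation of the simple group $\SL_2(\F_{2^8})$ has degree $257$, so no degree-$8$ (or otherwise small-degree) polynomial can exhibit it, and no explicit polynomial for $M$ is, or realistically could be, computed; the verification in \cite{demb09} proceeds entirely through Hecke eigenvalues, control of ramification, and group theory (Dickson's classification), with complements in \cite{ser09}.

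The actual construction --- which the present paper does not reprove but recalls from \cite{demb09} in this section --- is structurally different from what you describe, so deferring to that citation does not repair the argument as written. One works over the totally real field $F=\Q(\zeta_{32})^+$, itself ramified only at $2$, and takes a \emph{single} Hilbert newform $f$ of level $(1)$ and weight $2$ over $F$ lying in the Hecke constituent of dimension $16$; its coefficient field $L_f$ (of degree $16$, cyclic over $\Q(\sqrt{5})$) has exactly two primes $\lambda_1,\lambda_2$ above $2$, each with residue field $\F_{2^8}$. The two $\SL_2(\F_{2^8})$-factors are the reductions modulo $\lambda_1$ and $\lambda_2$ of the one compatible system $\rho_{f,\lambda}$ of representations of $\Gal(\Qbar/F)$ --- not of $\Gal(\Qbar/\Q)$ --- and your Goursat-type concern (that the two residual representations are not conjugate under $\Aut(\F_{2^8})$ and twists) is indeed the point that must be checked there, so that instinct is correct. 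The symmetry producing the final group is not an ad hoc outer automorphism but the action of $\Gal(F/\Q)$: since $f$ is not a base change and its Hecke orbit is $\Gal(F/\Q)$-stable (Lemma~\ref{lem:newforms}), this action permutes $\lambda_1$ and $\lambda_2$ and hence the two cut-out fields, so their compositum is Galois over $\Q$, and it is unramified outside $2$ because $F/\Q$ ramifies only at $2$ and $f$ has level $(1)$ and weight $2$. The correct repair of your plan is therefore to replace ``two classical forms over $\Q$'' by ``one Hilbert newform over $\Q(\zeta_{32})^+$ together with its two residual representations at the primes above $2$'', with the swapping element supplied by $\Gal(F/\Q)$ rather than by a hoped-for coincidence between two unrelated forms.
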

Theorem~\ref{thm:gross-conj1} settled a conjecture of Gross on the existence of non-solvable Galois number fields which ramify at one prime only for $p=2$. 
The number field $M$ was constructed using Galois representations arising from Hilbert modular forms. In this section, we will explain the very interesting connection between 
Theorem~\ref{thm:gross-conj1} and Conjecture~\ref{conj:gross}. 
 
Let $F = \Q(\zeta_{32})^+ = \Q(\beta)$, where $\beta:=\zeta_{32}+\zeta_{32}^{-1}$ has minimal polynomial $x^8 - 8x^6 + 20x^4 - 16x^2 + 2$.
The space $S_2(1)$, of Hilbert newforms of level $(1)$ and weight $2$ over $F$, has dimension $57$. The Hecke constituents
have dimensions $1, 2, 2, 4, 16$ and $32$. Let $f$ be one of the newforms in the Hecke constituent of dimension $16$; and let 
$L_f = \Q(\{a_\gp(f)\})$ be its coefficient field. Then $L_f$ is a cyclic extension of $K= \Q(\sqrt{5})$. More precisely, $L_f$ is given by the polynomial 
$x^8 + (-10w + 1)x^7 + (26w - 98)x^6 + (1137w - 257)x^5 + (-2048w + 1263)x^4 + (-32915w + 13419)x^3 + (16985w + 55780)x^2 
+ (154252w - 20245) - 170835w - 222209$, where $w = \frac{1 + \sqrt{5}}{2}$. It is a ray class field of conductor $\gc = (48w - 569)$,
which is the product of the primes $(-2w + 1), (w - 10)$, and $(23w - 11)$ above $5$, $89$ and $661$ respectively.  

\begin{lem}\label{lem:newforms} Let $\Gal(F/\Q) = \langle \sigma \rangle$; then there exists $\tau \in \Gal(L_f/K)$ such that
$$a_{\sigma(\gp)}(f) = \tau(a_{\gp}(f))\,\,\text{for all primes}\,\,\gp.$$
\end{lem}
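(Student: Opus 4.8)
The plan is to carry out, in this concrete case, the argument proving part (i) of Theorem~\ref{thm:lifts}; that argument concerns only Hecke eigenvalues and Galois conjugation, so no abelian variety attached to $f$ is needed (which is fortunate, since with level $(1)$ and $[F:\Q]$ even the Eichler--Shimura construction does not directly apply). First I would note that, because the level $(1)$ is $\sigma$-stable, the weight is parallel $2$, and the central character is trivial, the twisted form ${}^\sigma\! f$ --- the Hilbert newform with $T_\gp$-eigenvalue $a_{\sigma(\gp)}(f)$ --- is again a newform of level $(1)$, weight $2$ and trivial central character over $F$. Since $\gp\mapsto\sigma(\gp)$ permutes the primes of $F$, the coefficient field of ${}^\sigma\! f$ is exactly $L_f$, so its Hecke constituent has the same dimension as that of $f$, namely $16$.

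Next I would use the shape of the cuspidal spectrum: among the Hecke constituents of $S_2(1)$ over $F$, whose dimensions are $1,2,2,4,16,32$, the one of dimension $16$ is the unique constituent of that dimension. As $\sigma$ permutes the Hecke constituents and preserves their dimensions, it fixes the $16$-dimensional one; hence ${}^\sigma\! f$ lies in the same Hecke orbit as $f$, i.e. ${}^\sigma\! f = f^\tau$ for some $\tau\in\Hom(L_f,\C)$. Comparing coefficient fields shows $\tau(L_f)=L_f$, so $\tau\in\Aut(L_f/\Q)$, and unwinding the identity ${}^\sigma\! f = f^\tau$ gives precisely $a_{\sigma(\gp)}(f)=\tau(a_\gp(f))$ for every prime $\gp$.

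The remaining point --- and the only one I expect to require any care --- is that this $\tau$ lies in $\Gal(L_f/K)$ with $K=\Q(\sqrt5)$. Here I would argue that $L_f/\Q$ is not Galois: $L_f$ is the ray class field of $K$ of conductor $\gc$, the product of the primes above $5$, $89$ and $661$, and since $89$ and $661$ split in $K$ this modulus is not stable under the nontrivial element of $\Gal(K/\Q)$, so, lifting that element hypothetically and comparing the conductors of $L_f/K$ it would entail, the extension $L_f/\Q$ cannot be Galois. Therefore $\abs{\Aut(L_f/\Q)}<[L_f:\Q]=16$; but $\Gal(L_f/K)\subseteq\Aut(L_f/\Q)$ already has order $[L_f:K]=8$, which forces $\Aut(L_f/\Q)=\Gal(L_f/K)$, so the automorphism $\tau$ produced above automatically fixes $K$. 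Should the ray class field description prove awkward to invoke, the fallback is to verify directly from the tabulated eigenvalues that $\tau$ fixes $\sqrt5$, for instance by exhibiting a split prime $\gp$ for which $a_{\sigma(\gp)}(f)$ and $a_\gp(f)$ generate the same quadratic subfield of $L_f$.
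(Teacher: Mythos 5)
Your proposal is correct, and its core coincides with the paper's own argument: the paper likewise uses that the dimension-$16$ Hecke constituent of $S_2(1)$ is unique, so the action of $\Gal(F/\Q)$ preserves the Hecke orbit of $f$, giving ${}^\sigma\! f = f^\tau$ for some $\tau\in\Aut(L_f)$ and hence $a_{\sigma(\gp)}(f)=\tau(a_\gp(f))$ for all $\gp$. Two differences are worth recording. First, the paper additionally checks, via the eight distinct eigenvalues at the primes above the totally split prime $31$, that $f$ is not a base change from any proper subfield (trivial stabiliser of its orbit); this is not needed for the literal statement of the lemma (if $f$ were a base change, $\tau=1$ would already do), but it is what makes $\tau$ a generator of $\Gal(L_f/K)$ and is required when the lemma is fed into Theorem~\ref{thm:lifts} in the proof of Theorem~\ref{thm:gross-ex}, so it has to be supplied at some point even though you may omit it here. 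Second, you justify the one step the paper passes over in silence, namely that $\tau$ actually fixes $K$: your argument that $L_f/\Q$ is not Galois --- because the conductor $\gc$ of the abelian extension $L_f/K$ involves only one of the two primes above each of the split primes $89$ and $661$ and so is not stable under $\Gal(K/\Q)$, whence $\Aut(L_f)$ has order properly dividing $16$ and therefore equals the order-$8$ group $\Gal(L_f/K)$ --- is sound, granting the ray class field description stated just before the lemma, and it genuinely closes a small gap in the paper's terse proof. The only caveat is your fallback: the paper tabulates no Hecke eigenvalues for this constituent, so a direct numerical verification that $\tau$ fixes $\sqrt5$ cannot be read off from the data given there; the conductor argument should be regarded as the actual proof of that point.
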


\begin{proof} The prime $31$ splits completely in $F$. There are 8 distinct Hecke eigenvalues $a_{\gp}(f)$ for $\gp \mid 31$. So $f$ cannot be a base
change from any subfield of $F$. (In other words, the stabiliser of $f$ under the action of $\Gal(F/\Q)$ is trivial.) 
Furthermore, since $f$ is in the unique constituent of dimension $16$, its Hecke orbit must be preserved by the 
action of $\Gal(F/\Q)$. Therefore, letting $\Gal(F/\Q) = \langle \sigma \rangle$, we see that there exists $\tau \in \Gal(L_f/K)$ such that
$$a_{\sigma(\gp)}(f) = \tau(a_{\gp}(f))\,\,\text{for all primes}\,\,\gp.$$
\end{proof}

By Theorem~\ref{thm:galois-reps-for-hmf}, we can associate a compatible system of $\lambda$-adic Galois representations
$$\rho_{f, \lambda}:\, \Gal(\Qbar/F) \to \GL_2(L_{f,\lambda}),$$
where $\lambda$ runs over all the primes of $L_f$. The field $M$ in Theorem~\ref{thm:gross-conj1} was constructed from
the reductions of $\rho_{f,\lambda_1}$ and $\rho_{f,\lambda_2}$ modulo the primes $\lambda_1,\lambda_2$ above $2$. 
If we assume that the Eichler-Shimura conjecture holds for $F$, then there is an abelian variety $A_f/F$ of dimension $16$ and conductor $(1)$ such that:
\begin{enumerate}[1.]
\item  $\End_F(A_f)\otimes \Q \simeq L_f$;
\item  For all primes $\lambda$ of $K_f$, $\rho_{A_f, \lambda} = \rho_{f, \lambda}$, where
$\rho_{A_f, \lambda}:\, \Gal(\Qbar/F) \to \GL_2(L_{f,\lambda})$ is the Galois representation in
the $\lambda$-adic Tate module of $A_f$. 
\end{enumerate}
 This would mean that the number field $M$ arises from the field of $2$-torsion of the abelian variety $A_f$ (and its conjugates). 
 
\begin{thm}\label{thm:gross-ex} Assume that the Eichler-Shimura conjecture holds for $F$, and let $A_f$ be the abelian variety attached to $f$. 
Then, there exists an abelian variety $B/\Q$ of dimension $16$ such that the followings hold:
\begin{enumerate}[(i)]
\item The base change $B \times_{\Q}F$ is isogenous to $A_f$;
\item $\End_\Q(B)=\Z[\frac{1+\sqrt{5}}{2}]$;
\item The automorphic representation $\pi_f$ lifts to an automorphic representation $\Pi$ on $\GSpin_{17}(\A_\Q)$, with coefficients in 
$\Q(\sqrt{5})$, such that 
$$L(B, s) = L(\Pi, s) L(\Pi^\tau, s),$$
where $\Gal(\Q(\sqrt{5})/\Q) = \langle \tau \rangle$;
\item The conductor of $B$ is $d_{F/\Q}^{8} = (2^{31})^8 = 2^{248}$. (So, $B$ only has bad reduction at $2$.)
\end{enumerate}
\end{thm}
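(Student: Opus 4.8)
The plan is to deduce everything except the conductor statement directly from Theorem~\ref{thm:lifts}, once its hypotheses have been checked for the pair $(F/\Q, f)$ and the field $K$ it produces has been identified with $\Q(\sqrt{5})$. First I would record the structural facts: $F=\Q(\zeta_{32})^{+}$ is cyclic over $\Q$ of degree $g=8$, since $\Gal(F/\Q)\cong(\Z/32)^\times/\{\pm 1\}\cong\Z/8$; the level $\gN=(1)$ trivially satisfies $\gN^\sigma=\gN$; and $f$ is a Hilbert newform of parallel weight $2$, level $(1)$ and trivial central character. That $f$ is not a base change from any proper sub-extension of $F$, and that its Hecke orbit is stable under $\Gal(F/\Q)$, are precisely what the proof of Lemma~\ref{lem:newforms} establishes: the prime $31$ splits completely in $F$ with $8$ distinct eigenvalues $a_\gp(f)$ for $\gp\mid 31$, and $f$ lies in the unique Hecke constituent of $S_2(1)$ of dimension $16$. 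The standing Eichler--Shimura hypothesis supplies the abelian variety $A_f/F$ of dimension $16$ with $\End_F(A_f)\otimes\Q\simeq L_f$ and $\rho_{A_f,\lambda}=\rho_{f,\lambda}$, which is the variety ``attached to $f$'' required by Theorem~\ref{thm:lifts}; and the trivial central character certainly factors through $\mathrm{N}_{F/\Q}$, so Theorem~\ref{thm:theta-lifts} guarantees the lift exists. Applying Theorem~\ref{thm:lifts} then yields a subfield $K\subseteq L_f$ with $L_f/K$ cyclic of degree $8$, a generator $\tau\in\Gal(L_f/K)$ with $a_{\sigma(\gp)}(f)=\tau(a_\gp(f))$ for all $\gp$, an abelian variety $B/\Q$ with $\End_\Q(B)\otimes\Q\simeq K$ and $B\times_\Q F\sim A_f$, and a globally generic cuspidal automorphic representation $\Pi$ on $\GSpin_{2g+1}(\A_\Q)=\GSpin_{17}(\A_\Q)$ with field of rationality $K$ satisfying $L(B,s)=\prod_{\Pi'\in[\Pi]}L(\Pi',s)$.

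The next step is to identify $K=\Q(\sqrt{5})$. By construction $K=L_f^{\langle\tau\rangle}$, and $\tau$ is the non-trivial element of $\Gal(L_f/K)$ produced by Lemma~\ref{lem:newforms}, which that lemma phrases as an element of $\Gal(L_f/\Q(\sqrt{5}))$; since the discussion preceding it exhibits $L_f$ explicitly as a cyclic extension of $\Q(\sqrt{5})$ of degree $8$ (a ray class field of $\Q(\sqrt{5})$ of the stated conductor), the group $\langle\tau\rangle$ has order $8$ and fixed field $\Q(\sqrt{5})$, forcing $K=\Q(\sqrt{5})$. This already gives part~(i), the equality $\End_\Q(B)\otimes\Q=\Q(\sqrt{5})$, and --- because $\Hom(\Q(\sqrt{5}),\C)=\{\mathrm{id},\tau\}$ has exactly two elements --- the identity $L(B,s)=L(\Pi,s)\,L(\Pi^\tau,s)$ of part~(iii), with $\Gal(\Q(\sqrt{5})/\Q)=\langle\tau\rangle$; the lift $\Pi$ has coefficients in $\Q(\sqrt{5})$ as claimed. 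Note in passing that since $\dim B=16\neq 2=[\Q(\sqrt{5}):\Q]$, the variety $B$ is genuinely not of $\GL_2$-type, consistent with $\Pi$ living on $\GSpin_{17}$.

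To upgrade $\End_\Q(B)\otimes\Q=\Q(\sqrt{5})$ to the integral statement $\End_\Q(B)=\Z[\frac{1+\sqrt{5}}{2}]$ of part~(ii), I would observe that $\End_\Q(B)\otimes\Q$ being a field forces $B$ to be $\Q$-simple, so $\End_\Q(B)$ is an order in $\Q(\sqrt{5})$; replacing $B$ by the $\Q$-isogenous abelian variety $\CO_{\Q(\sqrt{5})}\otimes_{\End_\Q(B)}B$ we may assume $\End_\Q(B)$ is the maximal order $\CO_{\Q(\sqrt{5})}=\Z[\frac{1+\sqrt{5}}{2}]$. Since Theorem~\ref{thm:lifts} (through Theorem~\ref{thm:av-descent}) only determines $B$ up to isogeny, and parts~(i), (iii) and the conductor in~(iv) are isogeny invariants, this replacement is harmless.

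Finally, part~(iv) is a conductor computation. From $B\times_\Q F\sim A_f$, $\mathrm{cond}(A_f)=(1)$, and the fact that $F/\Q$ is ramified only at $2$, N\'eron--Ogg--Shafarevich gives $B$ good reduction at every odd prime, so $\mathrm{cond}(B)$ is a power of $2$; the exponent at $2$ is then dictated by the behaviour of conductors under base change applied to $\Res_{F/\Q}(A_f)\sim B^g$ --- equivalently, by the level recipe $\gN'=\mathrm{N}_{F/\Q}(\gN)\,\mathfrak{D}_{F/\Q}^g$ of the ``Level structure and new vectors theory for the lift'' subsection --- together with $\gN=(1)$, $g=8$, and the discriminant $d_{F/\Q}=2^{31}$ (checked directly via the conductor--discriminant formula for the even Dirichlet characters modulo $32$). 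I expect this local calculation at the wildly ramified prime $2$ to be the only step requiring more than bookkeeping; all the essential content already resides in Theorem~\ref{thm:lifts} and the computations underlying Lemma~\ref{lem:newforms}.
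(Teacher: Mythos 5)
For parts (i)--(iii) your route is exactly the paper's: check the hypotheses of Theorem~\ref{thm:lifts} for $(F/\Q,f)$ via Lemma~\ref{lem:newforms} and the Eichler--Shimura discussion, then identify $K=\Q(\sqrt{5})$ as the fixed field of $\langle\tau\rangle$ in $L_f$ (correctly using that $f$ is not a base change to force $\tau$ to have order $8$, so that the fixed field is not larger than $\Q(\sqrt 5)$), and read off (i), (iii) from the conclusion of Theorem~\ref{thm:lifts} together with $\#\Hom(\Q(\sqrt5),\C)=2$. Your extra step upgrading $\End_\Q(B)\otimes\Q=K$ to $\End_\Q(B)=\Z[\frac{1+\sqrt{5}}{2}]$ by moving within the isogeny class is a legitimate (and needed) repair, since Theorem~\ref{thm:av-descent} only controls the isogeny class; the paper's two-line proof does not address this point at all.

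The genuine problem is your treatment of (iv). You present the computation via $\Res_{F/\Q}(A_f)\sim B^{8}$ and the level recipe $\gN'=\N_{F/\Q}(\gN)\,\mathfrak{D}_{F/\Q}^{g}$ as ``equivalent'' and assert that either delivers $d_{F/\Q}^{8}=2^{248}$. They are not equivalent, and the first one --- the only honest computation of $\mathrm{cond}(B)$ --- does not give $2^{248}$. Indeed, the conductor of a Weil restriction is $\mathrm{cond}(\Res_{F/\Q}A_f)=d_{F/\Q}^{2\dim A_f}\,\N_{F/\Q}(\mathrm{cond}(A_f))=(2^{31})^{32}$, and $\Res_{F/\Q}(A_f)\sim B^{8}$ (this is what the proof of Theorem~\ref{thm:av-descent} gives) then forces $\mathrm{cond}(B)=(2^{31})^{4}=2^{124}$. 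Equivalently, over $\overline{\Q}_\ell$ the Tate module of $B$ decomposes as $\Ind_{F/\Q}(\rho_{f,\lambda})\oplus\bigl(\Ind_{F/\Q}(\rho_{f,\lambda})\bigr)^{\tau}$, and by the conductor--discriminant formula each $16$-dimensional summand has Artin conductor $d_{F/\Q}^{\dim\rho_{f,\lambda}}\,\N_{F/\Q}((1))=d_{F/\Q}^{2}=2^{62}$: the exponent on the discriminant is $\dim\rho_{f,\lambda}=2$, not $g=8$. So carrying out the calculation you outline yields $2^{124}$, not the stated $2^{248}$, and as a proof of (iv) as written your step fails; the discrepancy is hidden precisely in the word ``equivalently''. (For $g=2$ the two recipes coincide, since then $g=\dim\rho_{f,\lambda}$, which is presumably how the exponent $g$ crept into the level recipe and thence into the statement; your argument, once actually executed, shows that the exponent in (iv) --- and the consistency with $\mathrm{cond}(B)=\gN^{d}$ in Conjecture~\ref{conj:gross} --- needs to be revisited rather than reproduced.)
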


\begin{proof} The discussion above, together with Lemma~\ref{lem:newforms}, shows that the newform $f$
and the conjecturally attached abelian variety $A_f$ satisfy the conditions of Theorem~\ref{thm:lifts}. So the isogeny class of $A_f$ descends to $\Q$.
\end{proof}

\begin{rem}\rm The abelian varieties $A_f$ and $B$ were discussed in \cite{dv13}. But the connection with lifts
of Hilbert modular forms wasn't clear to the authors at the time.

\end{rem}

\bibliographystyle{amsalpha}

\providecommand{\bysame}{\leavevmode\hbox to3em{\hrulefill}\thinspace}
\providecommand{\MR}{\relax\ifhmode\unskip\space\fi MR }
\providecommand{\MRhref}[2]{%
  \href{http://www.ams.org/mathscinet-getitem?mr=#1}{#2}
}
\providecommand{\href}[2]{#2}

\end{document}